\documentclass[12pt]{amsart}

\usepackage{tikz-cd}

\usepackage{amssymb}
\usepackage{amsthm}
\usepackage{amscd}

\usepackage[mathscr]{eucal}
\usepackage{enumitem}

\usepackage{hyperref}

\usepackage{color}

\newtheorem*{Main Theorem}{Main Theorem}

\newtheorem{Theorem}{Theorem}[section]
\newtheorem{theorem}[Theorem]{Theorem}

\newtheorem{corollary}[Theorem]{Corollary}

\newtheorem{lemma}[Theorem]{Lemma}

\newtheorem{fact}[Theorem]{Fact}
\newtheorem{remark/def}[Theorem]{Remark/Definition}

\theoremstyle{definition}

\newtheorem{example}[Theorem]{Example}

\newtheorem{remark}[Theorem]{Remark}

\newtheorem{definition}[Theorem]{Definition}

\newtheorem{def/rem}[Theorem]{Definition/Remark}
\newtheorem{def/fact}[Theorem]{Definition/Fact}
\newtheorem{not/rem}[Theorem]{Notation/Remark}
\newsavebox{\indbin}
\savebox{\indbin}{\begin{picture}(0,0)
\newlength{\gnu}
\settowidth{\gnu}{$\smile$} \setlength{\unitlength}{.5\gnu}
\put(-1,-.65){$\smile$} \put(-.25,.1){$|$}
\end{picture}}

\newcommand{\be}{\begin{enumerate}}
\newcommand{\bi}{\begin{itemize}}

\newcommand{\bd}{\begin{defn}}

\newcommand{\bt}{\begin{theorem}}
\newcommand{\bl}{\begin{lemma}}
\newcommand{\ee}{\end{enumerate}}
\newcommand{\ei}{\end{itemize}}
\newcommand{\ed}{\end{defn}}
\newcommand{\et}{\end{theorem}}
\newcommand{\el}{\end{lemma}}

\def\fm{\mathfrak{m}}

\newcommand{\Th}{\operatorname{Th}}

\def\res{\operatorname{res}}

\def\Im{\operatorname{Im}}

\def\H{\mathcal H}

\def\hp{+_{\mathcal H}}
\def\hm{\cdot_{\mathcal H}}
\def\hn{\nu_{\mathcal H}}
\def\hsum{\sum^{\mathcal H}}


\title[The AKE principles via the higher valued hyperfield]{The Ax-Kochen-Ershov principles via the higher valued hyperfield}
\author{Junguk Lee}
\address{Department of Mathematics, Changwon National University\\ Changwon 51140\\ South Korea}
\email{ljwhayo@changwon.ac.kr}


\begin{document}

\begin{abstract}
In this paper, we concern the model theory of finitely ramified henselian valued fields via higher valued hyperfields.

Most of all, we provide a number of Ax-Kochen-Ershov Theorems for finitely ramified henselian valued fields relative to higher valued hyperfields. As corollaries, we deduce a transfer of decidability for full theories and existential theories of a finitely ramified henselian valued fields relative to higher valued hyperfields. 
\end{abstract}

\maketitle

\section{Introduction}
In this paper, we concern model theory of finitely ramified henselian valued fields via higher valued hyperfields.

A valued field $(K,\nu)$ of mixed characteristic $(0,p)$ is called {\em finitely ramified} if the interval $(0,\nu(p)]$ in the value group is finite, and its cardinality $e$ is called the {\em initial ramification index}. If the initial ramification index $e$ is one, then the valued fields is called {\em unramified}. Let $\mathcal O_{\nu}$ be the valuation ring of $K$ with the maximal ideal $\mathfrak{m}_{\nu}$. Let $\nu K$ be the value group and we say $(K,\nu)$ is $\mathbb Z$-valued field if $\nu K\cong \mathbb Z$. Let $K\nu$ be the residue field of $K$ and $\res_{\nu}:\mathcal O_{\nu}\rightarrow K\nu$ be the natural projection map. For each $n\ge 1$, let $\mathcal O_{\nu,n}$ be the quotient of $\mathcal O_{\nu,n}$ by $\mathfrak{m}_{\nu}^n$.\\

In \cite{ADJ24}, Anscombe, Dittmann, and Jahnke proved completeness, existential completeness, and model completeness results for theories of finitely ramified henselian valued fields relative to residue fields and value groups.
\begin{fact}\label{fact:main_ADJ24}\cite[Theorem 5.1, Corollary 5.6, Theorem 5.13]{ADJ24}
If $(K,\nu)$ and $(L,\omega)$ are finitely ramified henselian valued fields of the same initial ramification index, then
$$\underbrace{(K,\nu)\equiv (L,\omega)}_{\mbox{ in }\mathcal L_{val}}\Leftrightarrow \underbrace{K\nu\equiv L\omega}_{\mbox{ in }\mathcal L_{p,e}}\mbox{ and }\underbrace{\nu K\equiv \omega L}_{\mbox{ in }\mathcal L_{oag}}$$ and
$$\underbrace{(K,\nu)\equiv_{\exists} (L,\omega)}_{\mbox{ in }\mathcal L_{val}}\Leftrightarrow \underbrace{K\nu\equiv_{\exists} L\omega}_{\mbox{ in }\mathcal L_{p,e}}$$ and,
in case $(K,\nu)\subseteq (L,\omega)$,
$$\underbrace{(K,\nu)\preceq_{\exists} (L,\omega)}_{\mbox{ in }\mathcal L_{val}}\Leftrightarrow \underbrace{K\nu\preceq_{\exists} L\omega}_{\mbox{ in }\mathcal L_{p,e}}\mbox{ and }\underbrace{\nu K\preceq_{\exists} \omega L}_{\mbox{ in }\mathcal L_{oag}},$$
where $\mathcal L_{val}$ is the three-sorted language of valued fields, $\mathcal L_{oag}$ the language of ordered abelian groups, and $\mathcal L_{p,e}$ is an expansion of the ring language $\mathcal L_{ring}$ by adding some finitely many relation symbols, which are existentially definable in $\mathcal L_{val}$ (see the first paragraph of Section \ref{sect:preliminaries} for precise definitions of the languages $\mathcal L_{val}$, $\mathcal L_{oag}$, and $\mathcal L_{ring}$).
\end{fact}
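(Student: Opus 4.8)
The plan is to establish the three biconditionals of the statement separately; the left-to-right implications are formal, and the right-to-left ones carry the content.

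\textbf{The easy directions.} The value group $\nu K$ and the residue field $K\nu$ are interpretable in $(K,\nu)$ in $\mathcal L_{val}$, and, by hypothesis, each additional relation symbol of $\mathcal L_{p,e}$ is existentially $\mathcal L_{val}$-definable, uniformly across finitely ramified henselian valued fields of ramification index $e$. Thus an $\mathcal L_{val}$-elementary equivalence $(K,\nu)\equiv(L,\omega)$ restricts to $K\nu\equiv L\omega$ in $\mathcal L_{p,e}$ and to $\nu K\equiv\omega L$ in $\mathcal L_{oag}$; since the $\mathcal L_{p,e}$-relations are \emph{existential}, an existential $\mathcal L_{p,e}$-sentence holding in the residue field pulls back to an existential $\mathcal L_{val}$-sentence holding in the valued field, which gives the forward direction of the $\equiv_\exists$ statement, and similarly for $\preceq_\exists$ using that the interpretations are compatible with the given inclusion.

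\textbf{Completeness, hard direction.} I would argue by a back-and-forth over sufficiently saturated models. Pass to $\kappa$-saturated elementary extensions of $(K,\nu)$ and $(L,\omega)$, $\kappa$ large, retaining $K\nu\equiv L\omega$ and $\nu K\equiv\omega L$; it then suffices to produce a nonempty family of partial $\mathcal L_{val}$-elementary maps between small substructures, closed under one-point extensions on both sides. The engine is a \emph{relative embedding lemma}: given a small finitely ramified henselian subfield $(F,\mu)\subseteq(K,\nu)$, an $\mathcal L_{val}$-embedding $(F,\mu)\hookrightarrow(L,\omega)$, and compatible partial elementary maps on $F\mu$ (in $\mathcal L_{p,e}$) and on $\mu F$ (in $\mathcal L_{oag}$), one can enlarge the embedding to absorb any prescribed element of $K$. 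The extension step splits into three moves: (i) adjoin to the domain elements of prescribed value, so that its value group becomes first an elementary substructure of, then eventually all of, $\omega L$ --- in a henselian valued field such elements are realizable without disturbing the residue field; (ii) lift the residue-field extension, the one place where mixed characteristic intervenes: with no coefficient field available, what must be lifted is the local Artinian quotient $\mathcal O_\nu/p\mathcal O_\nu$, of length $e$ over the residue field, and the point of $\mathcal L_{p,e}$ is that an $\mathcal L_{p,e}$-elementary extension of residue fields determines that quotient-extension up to isomorphism over the base, which, with a Cohen-structure-type lifting and henselianity, yields an extension of the finitely ramified core; (iii) once value group and residue data are matched, henselianity lets any remaining element of $K$ be approximated by one algebraic over the current domain, and Hensel's lemma produces its counterpart in $L$.

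\textbf{Existential and existentially-closed versions.} For the $\preceq_\exists$ statement I would rerun the three moves tracking only quantifier-free and existential information and only finitely generated substructures, using $\aleph_1$-saturation of $L\omega$ in $\mathcal L_{p,e}$ and of $\omega L$ in $\mathcal L_{oag}$ for the realizations in (i) and (ii); the inclusion $(K,\nu)\subseteq(L,\omega)$ already pins the base, so only the ``forth'' embedding into an $|K|^+$-saturated elementary extension of $(L,\omega)$ is needed. The $\equiv_\exists$ statement is handled similarly; the absence of any value-group hypothesis there reflects that, over a finitely ramified henselian base, solvability of a finite system of polynomial conditions is detected already in the residue-field structure, so a witness of an existential $\mathcal L_{val}$-sentence can be transported through the residue field alone.

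\textbf{Main obstacle.} The serious point is move (ii): verifying that the finitely many relations of $\mathcal L_{p,e}$ genuinely capture all the model-theoretic content of $\mathcal O_\nu/p\mathcal O_\nu$ as an algebra over $K\nu$, so that $\mathcal L_{p,e}$-elementary equivalence of residue fields is precisely the data needed to match the finitely ramified cores. This amounts to a relative structure/quantifier-elimination statement for $\mathcal O_\nu/p\mathcal O_\nu$ over the residue field, and it is where the real work of the argument lies; the value-group move (i) is routine, and the henselian move (iii) is the familiar Ax--Kochen closing argument.
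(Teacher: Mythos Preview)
The paper does not prove this statement: Fact~\ref{fact:main_ADJ24} is quoted verbatim from \cite[Theorem~5.1, Corollary~5.6, Theorem~5.13]{ADJ24} and no argument for it is given here, so there is no ``paper's own proof'' to compare your proposal against. Your sketch is a reasonable outline of the strategy actually used in \cite{ADJ24} --- coarsening to the complete $\mathbb Z$-valued core, a Cohen-ring lifting lemma for the residue data, and a saturation/embedding argument to conclude --- and indeed the present paper repeatedly invokes precisely those ingredients from \cite{ADJ24} (e.g.\ \cite[Lemma~4.1, Lemma~4.9, Lemma~4.10, Lemma~5.12]{ADJ24}) when proving its hyperfield analogues. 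One caveat on your move~(ii): the role of the $\mathcal L_{p,e}$-relations is not quite to pin down $\mathcal O_\nu/p\mathcal O_\nu$ as an algebra, but rather to record enough of the $p$-basis/imperfection data of the residue field so that the lifting to a strict Cohen subring (and then to the totally ramified extension of degree $e$) can be made compatibly on both sides; this is the content of \cite[Section~4]{ADJ24}, and your description of it as ``a relative structure/quantifier-elimination statement for $\mathcal O_\nu/p\mathcal O_\nu$'' is a bit off the mark.
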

Most of all, their AKE principle for elementary equivalence strengths the results of Basarab and of Lee and the author. In \cite[Theorem 3.1]{Bas78}, Basarab proved that
$$\underbrace{(K,\nu)\equiv (L,\omega)}_{\mbox{ in }\mathcal L_{val}}\Leftrightarrow \underbrace{\mathcal O_{\nu,n}\equiv \mathcal O_{\omega,n}}_{\mbox{ in }\mathcal L_{ring}}\mbox{ for all }n\ge 1\mbox{ and }\underbrace{\nu K\equiv \omega L}_{\mbox{ in }\mathcal L_{oag}}.$$
If residue fields are perfect, we in \cite[Theorem 5.2]{LL21} showed that it is enough to consider a single $n=n_0$ depending only on $p$ and $e$, not considering all $n$ simultaneously.\\

Meanwhile, we in \cite{Lee20} proved the AKE principle for elementary equivalence relative to higher valued hyperfields when residue fields are perfect. Given a valued field $(K,\nu)$ and a positive integer $n\ge 1$, the {\em $n$th valued hyperfield} is $\mathcal H_{\nu,n}:=K/(1+\mathfrak{m}_{\nu}^n)$. The higher valued hyperfield forms a certain valued field-like algebraic structure with a multivalued addition. We consider higher valued hyperfields as structures in the one-sorted language $\mathcal L_{vhf}$ of valued hyperfields (see Subsection \ref{subsect:valued hyperfield} for the detailed definition of the language). Given integers $n$ and $m$, we write $n|m$ if $n$ divides $m$.
\begin{fact}\label{fact:main_Lee20}\cite[Theorem 5.8, Corollary 5.9]{Lee20}
Let $(K,\nu)$ and $(L,\omega)$ be finitely ramified henselian valued fields of mixed characteristic $(0,p)$ having the same initial ramification index $e$. Let $n$ be a positive integer satisfying that
$$\begin{cases}
n>0 &\mbox{if } p\not| e,\\
n>e^2M_{\nu}&\mbox{if } p|e. 
\end{cases}
$$
Then,
$$\underbrace{(K,\nu)\equiv (L,\omega)}_{\mbox{ in } \mathcal L_{val}}\Leftrightarrow \underbrace{(\mathcal H_{\nu,n},[p]_n)\equiv (\mathcal H_{\omega,n},[p]_n)}_{\mbox{ in }\mathcal L_{vhf}}.$$ 
\end{fact}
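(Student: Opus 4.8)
The plan is to prove the two implications separately; $\Rightarrow$ is soft, and $\Leftarrow$ carries all the weight.

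For $\Rightarrow$ I would use uniform interpretability. For each fixed $n$ the subgroup $1+\mathfrak m_\nu^n$ is $\mathcal L_{val}$-definable in $(K,\nu)$ by a formula not depending on $(K,\nu)$: since $(0,\nu(p)]$ is finite and nonempty the value group has a least positive element $\delta$, and $\mathfrak m_\nu^n=\{x:\nu(x)\ge n\delta\}$. Hence the quotient set $K/(1+\mathfrak m_\nu^n)$ equipped with the induced multiplication, the induced multivalued addition regarded as a ternary relation, the distinguished elements $[0]_n,[1]_n$, the induced valuation onto $\nu K$, and the constant $[p]_n$ --- that is, the whole expanded $\mathcal L_{vhf}$-structure $(\mathcal H_{\nu,n},[p]_n)$ --- is interpretable in $(K,\nu)$. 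As elementary equivalence is transported through interpretations, $(K,\nu)\equiv(L,\omega)$ forces $(\mathcal H_{\nu,n},[p]_n)\equiv(\mathcal H_{\omega,n},[p]_n)$.

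For $\Leftarrow$ the strategy is to feed the hyperfield back into an existing Ax--Kochen--Ershov principle: by Fact~\ref{fact:main_ADJ24} it suffices to deduce, from $\mathcal H_{\nu,n}\equiv\mathcal H_{\omega,n}$, that $\nu K\equiv\omega L$ in $\mathcal L_{oag}$ and $K\nu\equiv L\omega$ in $\mathcal L_{p,e}$ (one could equally route through Basarab's theorem \cite{Bas78}, for which one instead needs $\mathcal O_{\nu,m}\equiv\mathcal O_{\omega,m}$ in $\mathcal L_{ring}$ for all $m$). The value group is obtained by interpreting it in $\mathcal H_{\nu,n}$: the valuation ring of the hyperfield, $\mathcal O_{\mathcal H}=\{x:\nu(x)\ge 0\}$, is $\mathcal L_{vhf}$-definable, its unit group is $\mathcal O_{\mathcal H}\cap\mathcal O_{\mathcal H}^{-1}$, and $\nu K\cong\mathcal H_{\nu,n}^{\times}/\mathcal O_{\mathcal H}^{\times}$ with the induced order. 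The truncated valuation rings up to level $n$ are obtained similarly: for $m\le n$ the hyperfield $\mathcal H_{\nu,m}$ is the further quotient of $\mathcal H_{\nu,n}$ by the definable subgroup $(1+\mathfrak m_\nu^m)/(1+\mathfrak m_\nu^n)$, hence interpretable in it, and $\mathcal O_{\nu,m}=\mathcal O_\nu/\mathfrak m_\nu^m$ is in turn interpretable in $\mathcal H_{\nu,m}$ --- its underlying set is a definable quotient of $\{x\in\mathcal H_{\nu,m}:\nu(x)\ge 0\}$ (for elements of positive valuation one first reduces to the unit case by dividing by a power of a uniformizer and passing to a lower truncation), the multiplication is induced, and the addition is induced by the multivalued addition, the only thing to check being that after pushing forward to $\mathcal O_{\nu,m}$ the hyperset $[a]_m\oplus[b]_m$ collapses to a single class modulo $\mathfrak m_\nu^m$, which is a one-line valuation computation. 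This already yields $\mathcal O_{\nu,m}\equiv\mathcal O_{\omega,m}$ for every $m\le n$.

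What remains, and what I expect to be the main obstacle, is to pass from ``all $m\le n$'' to the $\mathcal L_{p,e}$-structure on $K\nu$ (equivalently, to $\mathcal O_{\nu,m}$ for all $m$); this is exactly where the lower bound on $n$, in particular $n>e^2M_\nu$ when $p\mid e$, is used. When $p\nmid e$ the tame structure theory of finitely ramified rings makes this passage harmless for any $n>0$. When $p\mid e$ one must take $n$ large enough that the behaviour of $p$-th powers --- equivalently, the finitely many $\mathcal L_{p,e}$-relations introduced in \cite{ADJ24} on the residue field --- becomes expressible ``modulo $\mathfrak m_\nu^n$'': the relevant Hensel--Teichm\"uller-type lifting has error terms governed by the ramification constant $M_\nu$, and these are absorbed precisely when $n>e^2M_\nu$, so that $\mathcal O_{\nu,n}$ --- hence $\mathcal H_{\nu,n}$ --- interprets $(K\nu,\mathcal L_{p,e})$ and thereby determines the whole tower $(\mathcal O_{\nu,m})_m$ up to elementary equivalence. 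Feeding $\nu K\equiv\omega L$ together with $K\nu\equiv L\omega$ in $\mathcal L_{p,e}$ into Fact~\ref{fact:main_ADJ24} then gives $(K,\nu)\equiv(L,\omega)$ in $\mathcal L_{val}$. The forward direction and the value-group step should be routine; essentially all the difficulty --- and all the use of the finite-ramification and $p\mid e$ hypotheses --- sits in calibrating how much of the additive structure of $\mathcal O_{\nu,n}$ is visible through the multivalued addition of the hyperfield.
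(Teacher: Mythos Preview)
Your forward direction via uniform interpretability is correct and is exactly what the paper does (Remark~\ref{rem:existentially definability of the valued hyperfield}).

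For the backward direction your route diverges sharply from the paper's, and the step you yourself flag as ``the main obstacle'' is a genuine gap. The paper does \emph{not} reduce to Fact~\ref{fact:main_ADJ24} (or to Basarab) at all. Instead it argues directly (see the proof of Theorem~\ref{theorem:relative completeness}): pass to $\aleph_1$-saturated ultrapowers so that by Keisler--Shelah the hyperfields are isomorphic over $[p]_n$; take the finest proper coarsenings $\nu^{\circ},\omega^{\circ}$, whose residue fields $(K\nu^{\circ},\bar\nu)$, $(L\omega^{\circ},\bar\omega)$ are now complete $\mathbb Z$-valued of ramification $e$; observe (Fact~\ref{fact:coarsening and hyperfield}, Remark~\ref{rem:coarsening and hyperfield}) that the hyperfield isomorphism descends to one between $\mathcal H_{\bar\nu,n}$ and $\mathcal H_{\bar\omega,n}$; and then invoke the lifting theorem (Theorem~\ref{theorem:lifting lemma}) to promote this to an isomorphism $(K\nu^{\circ},\bar\nu)\cong(L\omega^{\circ},\bar\omega)$ of valued fields. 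The bound $n>e^2M_\nu$ enters only here, and only through Krasner's lemma (Fact~\ref{fact:value_of_different}): $M_\nu$ is the maximal valuation of a difference of conjugates of a uniformizer, and the bound guarantees that an approximate zero of its Eisenstein polynomial, produced by the hyperfield map, can be upgraded to an exact one. One then finishes with the equicharacteristic-zero AKE for the coarse valuation.

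Your proposed reduction, by contrast, needs $\mathcal H_{\nu,n}$ to interpret $(K\nu,\mathcal L_{p,e})$, equivalently to recover $\mathcal O_{\nu,m}$ for \emph{all} $m$ from the single level $n$. You offer no mechanism for this, and none is apparent: the $\mathcal L_{p,e}$-predicates of \cite{ADJ24} were introduced precisely to record information invisible in any fixed truncation when the residue field is imperfect, and the one known ``single $n$ suffices'' result \cite{LL21} assumes perfectness. Your description of $M_\nu$ as a Hensel--Teichm\"uller error bound is also off --- it is a Krasner constant attached to the uniformizer, not to representatives of the residue field --- so there is no reason to expect it to calibrate interpretability of the $\mathcal L_{p,e}$-relations. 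In short, the missing step is at least as hard as the theorem; the paper sidesteps it entirely by building the valued-field embedding directly from the hyperfield data.
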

Our main results in this paper are several versions of Ax-Kochen-Ershov principles for finitely ramified henselian valued fields relative to higher valued hyperfields without any restriction on residue fields, analogous to Fact \ref{fact:main_ADJ24} and generalizing Fact \ref{fact:main_Lee20}.
\begin{Main Theorem}[Theorem \ref{theorem:relative completeness}, Theorem \ref{theorem:relative existential completeness}, Theorem \ref{theorem:relative existential closedness}]
Let $(K,\nu)$ and $(L,\omega)$ be finitely ramified henselian valued fields of mixed characteristic $(0,p)$ having the same initial ramification index $e$. Let $n$ be a positive integer satisfying that
$$\begin{cases}
n>0 &\mbox{if } p\not| e,\\
n>e^2M_{\nu}&\mbox{if } p|e. 
\end{cases}
$$
Then,
$$\underbrace{(K,\nu)\equiv (L,\omega)}_{\mbox{ in }\mathcal L_{val}}\Leftrightarrow \underbrace{(\mathcal H_{\nu,n},[p]_n)\equiv (\mathcal H_{\omega,n},[p]_n)}_{\mbox{ in }\mathcal L_{vhf}}$$ and
$$\underbrace{(K,\nu)\equiv_{\exists} (L,\omega)}_{\mbox{ in }\mathcal L_{val}}\Leftrightarrow \underbrace{(\mathcal H_{\nu,n},[p])\equiv_{\exists} (\mathcal H_{\omega,n},[p]_n)}_{\mbox{ in }\mathcal L_{vhf}}$$ and,
in case $(K,\nu)\subseteq (L,\omega)$,
$$\underbrace{(K,\nu)\preceq_{\exists} (L,\omega)}_{\mbox{ in }\mathcal L_{val}}\Leftrightarrow \underbrace{\mathcal H_{\nu,n}\preceq_{\exists} \mathcal H_{\omega,n}}_{\mbox{ in }\mathcal L_{vhf}}.$$
\end{Main Theorem}
As corollaries, we deduce a transfer of decidability for full theories (see Corollary \ref{cor:transfer of decidability of full theories}) and existential theories (see Corollary \ref{cor:Hilbert's 10th via the higher valued field}). Note that Kartas proved that for henselian valued fields $(K,\nu)$ and $(L,\omega)$ tamely (and not necessarily finitely) ramified extensions of $(\mathbb Q,\nu_p)$ with the $p$-adic valuation $\nu_p$,
$$\underbrace{(\mathcal H_{\nu,1},[p]_1)\equiv_{\exists} (\mathcal H_{\omega,1},[p]_1)}_{\mathcal L_{vhf}}\Rightarrow \underbrace{(K,\nu)\equiv_{\exists} (L,\omega)}_{\mathcal L_{ring}}$$ under a certain form of resolution of singularities (see \cite[Theorem A]{Kar23}).\\

Now we briefly explain the structure of the paper. In Section \ref{sect:preliminaries}, we  remind several facts on basic terminologies used throughout the paper. Most of all, we concern several facts on the strict Cohen subring and its representatives developed by Anscombe and Jahnke in \cite{AJ22}. Also, we consider basic properties on valued hyperfields. And we introduce the one-sorted language $\mathcal L_{vhf}$ of valued hyperfields and concern the $\mathcal L_{vhf}^+$-homomorphism of valued hyperfields, which is crucially used for the AKE principles for existentially completeness and existentially closedness relative to higher valued hyperfields (see Remark \ref{rem:L_vhf homomorphism and vhf homomorphism}). In Section \ref{sect:lifting via valued hyperfield}, we prove two versions of embedding theorems of valued fields relative to higher valued hyperfields for complete $\mathbb Z$-valued fields: over structures and not over structures (see Theorem \ref{theorem:lifting lemma} and Lemma \ref{lem:embedding over valued fields}). Finally, in Section \ref{sect:AKE via hyperfields}, we apply our embedding theorems to prove the AKE principles for finitely ramified valued fields relative to higher valued hyperfieds.

\section{Preliminaries}\label{sect:preliminaries}

Throughout the paper, we consider valued fields as first-order structures in the three-sorted language $\mathcal L_{val}$: A sort $K$ for the field, a sort $k$ for the residue field, and a sort $\Gamma\cup\{\infty\}$ for the value group with infinity. Two field sorts $K$ and $k$ are interpreted as rings in the language $\mathcal L_{ring}:=\{+,\cdot,0,1\}$ and the value group sort $\Gamma\cup\{\infty\}$ interpreted as an ordered abelian group with infinity in the language $\mathcal L_{oag}:=\{+,0,<,\infty\}$. And three sorts are connected by symbols for the residue map $\res:K\rightarrow k$, interpreted as the constant zero map outside the valuation ring, and the valuation map $\nu:K\rightarrow \Gamma\cup\{\infty\}$. 

\subsection{The strict Cohen subring and Representatives}\label{subsect:Cohen subring and its representatives}
In this subsection, we assume that $(K,\nu)$ is complete and $\mathbb Z$-valued. A {\em strict Cohen ring} is a complete unramified valuation ring of mixed characteristic $(0,p)$ (c.f. \cite[Section 2]{AJ22}). We recall that $\mathcal O_{\nu}$ has a strict Cohen subring over which $\mathcal O_{\nu}$ is totally ramified, and we call such a ring a {\em strict Cohen ring of $\mathcal O_{\nu}$}. For a field $k$ of characteristic $p>0$ and a subfield $k_0$ of $k$, a subset $B$ of $k$ is called {\em p-independent over $k_0$} if $$[k^p k_0(b_1,\ldots, b_r):k^p k_0]=p^r$$ for all distinct elements $b_1,\ldots,b_r\in B$ and for all $r>0$, and $B$ is called a {\em p-basis over $k_0$} if $k=k^pk_0(B)$. If $k_0$ is the prime field, then we omit `over $k_0$'.

\begin{fact}[{\cite[Theorem 11 and Theorem 13]{Coh46}}]\label{fact:existence of a strict Cohen subring}\
\begin{enumerate}
	\item Each tuple $B$ of representatives of a $p$-basis of $K\nu$ is contained a strict Cohen subring $R$ of $\mathcal O_{\nu}$.
	
	\item Given tuples $B$ and $B'$ of representatives of a same $p$-basis of $K\nu$ and strict Cohen subrings $R$ and $R'$ of $\mathcal O_{\nu}$ containing $B$ and $B'$ respectively, $B=B'$ (as a set) if and only if $R=R'$. Especially, if $K\nu$ is perfect or $\mathcal O_{\nu}$ is unramified, then such a strict Cohen subring is unique. So, given a tuple $B$ of representatives of a $p$-basis of $K\nu$, we write $R(B)$ for the strict Cohen subring of $\mathcal O_{\nu}$ containing $B$.
\end{enumerate}
\end{fact}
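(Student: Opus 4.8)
This is the classical Cohen structure theory (\cite{Coh46}); here is how I would organize the proof in the present setting. Since $\mathcal{O}_{\nu}$ is complete and $\mathbb{Z}$-valued of mixed characteristic $(0,p)$ it is finitely ramified, say $p\mathcal{O}_{\nu}=\mathfrak{m}_{\nu}^{e}$, so the $p$-adic and $\mathfrak{m}_{\nu}$-adic topologies agree and $\mathcal{O}_{\nu}=\varprojlim_{m}\mathcal{O}_{\nu}/p^{m}\mathcal{O}_{\nu}$. Write $k:=K\nu$ and fix a $p$-basis $\bar{B}=(\bar{b}_{i})_{i\in I}$ of $k$ together with a lift $B=(b_{i})_{i\in I}$ in $\mathcal{O}_{\nu}$, $\res_{\nu}(b_{i})=\bar{b}_{i}$. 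The plan for (1) is a successive approximation: build inductively subrings $R_{m}\subseteq\mathcal{O}_{\nu}/p^{m}\mathcal{O}_{\nu}$ which are flat over $\mathbb{Z}/p^{m}\mathbb{Z}$, have residue field $k$, contain the image of $B$, and are compatible with the truncations $\mathcal{O}_{\nu}/p^{m+1}\mathcal{O}_{\nu}\twoheadrightarrow\mathcal{O}_{\nu}/p^{m}\mathcal{O}_{\nu}$ (so that $R_{m+1}$ maps onto $R_{m}$ with kernel $p^{m}R_{m+1}$); then $R:=\varprojlim_{m}R_{m}$, viewed inside $\mathcal{O}_{\nu}$, is the candidate strict Cohen subring.

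For the base case I would construct a coefficient field of the Artinian local ring $\mathcal{O}_{\nu}/p\mathcal{O}_{\nu}$, which has characteristic $p$, residue field $k$, and nilpotent maximal ideal $\mathfrak{n}$, say $\mathfrak{n}^{N}=0$. For any $M$ with $p^{M}\geq N$ the map $a\mapsto a^{p^{M}}$ annihilates $\mathfrak{n}$ (Kummer's congruences for the binomial coefficients, together with $p=0$), hence factors through a ring embedding $\phi:k\hookrightarrow\mathcal{O}_{\nu}/p\mathcal{O}_{\nu}$ whose image reduces to $k^{p^{M}}$; putting $R_{1}:=\phi(k)[\,b_{i}^{(1)}:i\in I\,]$ with $b_{i}^{(1)}$ the image of $b_{i}$, the identity $k=k^{p^{M}}(\bar{B})$ (obtained by iterating the $p$-basis relation) forces $R_{1}\twoheadrightarrow k$, while $p$-independence of $\bar{B}$ makes the monomials in the $b_{i}^{(1)}$ linearly independent over $\phi(k)$, whence $R_{1}\cong k$. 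For the inductive step I would lift the inclusion $R_{m}\hookrightarrow\mathcal{O}_{\nu}/p^{m}\mathcal{O}_{\nu}$ to a subring $R_{m+1}\hookrightarrow\mathcal{O}_{\nu}/p^{m+1}\mathcal{O}_{\nu}$, lifting the $b_{i}$ to arbitrary preimages and lifting the elements of $\phi(k)$ by taking $p^{M}$-th powers of chosen preimages of their $p^{M}$-th roots; since the kernel $p^{m}\mathcal{O}_{\nu}/p^{m+1}\mathcal{O}_{\nu}$ is square zero (as $2m\geq m+1$) and $\bar{B}$ is $p$-independent, the subring generated by these lifts is again $\mathbb{Z}/p^{m+1}\mathbb{Z}$-flat with residue field $k$ and compatible with the truncation. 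Passing to the limit, $R$ is $p$-adically complete and $p$-torsion free with $R/pR\cong R_{1}\cong k$, hence a complete unramified valuation ring of mixed characteristic $(0,p)$, i.e.\ a strict Cohen ring; it contains $B$ by construction; and since $R/pR\to\mathcal{O}_{\nu}/\mathfrak{m}_{\nu}$ is the identity on $k$, the extension $R\subseteq\mathcal{O}_{\nu}$ is totally ramified (indeed $\mathcal{O}_{\nu}/p\mathcal{O}_{\nu}=\mathcal{O}_{\nu}/\mathfrak{m}_{\nu}^{e}$ has $k$-dimension $e$, so complete Nakayama makes $\mathcal{O}_{\nu}$ free of rank $e$ over $R$). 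This gives (1).

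For (2), the point is that the ring just built is essentially forced by $B$. Given any strict Cohen subring $R'\subseteq\mathcal{O}_{\nu}$ containing $B$ with $\mathcal{O}_{\nu}$ totally ramified over it, each $R'/p^{m}R'$ is a $\mathbb{Z}/p^{m}\mathbb{Z}$-flat subring of $\mathcal{O}_{\nu}/p^{m}\mathcal{O}_{\nu}$ with residue field $k$ containing the image of $B$; by the uniqueness half of the construction — the coefficient field of $\mathcal{O}_{\nu}/p\mathcal{O}_{\nu}$ containing a full lift of the $p$-basis is unique (it must contain $\phi(k)$ and the $b_{i}^{(1)}$, hence equals $R_{1}$), and each level is then determined by the previous one together with $B$ — one gets $R'/p^{m}R'=R_{m}$ inside $\mathcal{O}_{\nu}/p^{m}\mathcal{O}_{\nu}$ for all $m$, and since $R,R'$ are $p$-adically closed, $R'=R$. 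Thus the strict Cohen subring containing a prescribed $B$ is unique, which yields the implication $B=B'\Rightarrow R=R'$; the reverse implication of (2) is then read off from the same level-by-level description. When $k$ is perfect the $p$-basis is empty, so $B$ is empty and $R\cong\varprojlim_{m}W_{m}(k)=W(k)$ is canonical; when $\mathcal{O}_{\nu}$ is unramified $e=1$ forces $R=\mathcal{O}_{\nu}$; so in either special case the strict Cohen subring is unique outright.

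The hard part will be the inductive lifting step and the $p$-independence bookkeeping it rests on: one must check that lifting the generators level by level never introduces relations that destroy $\mathbb{Z}/p^{m}\mathbb{Z}$-flatness or enlarge the residue field, and that — up to the ambiguity recorded in (2) — the lift obtained is the only one, so that the inverse limit is genuinely a complete \emph{unramified} valuation ring and not merely a flat $\mathbb{Z}_{p}$-algebra. This is precisely where the hypothesis that $B$ represents a $p$-\emph{basis}, rather than a mere generating set of the residue field, is indispensable, and it is the technical content of \cite[Theorems 11 and 13]{Coh46}.
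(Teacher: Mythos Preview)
The paper does not give its own proof of this statement: it is recorded as a \emph{Fact} with a bare citation to Cohen's original paper \cite[Theorems 11 and 13]{Coh46}, and the text moves on immediately to the next fact. So there is nothing in the paper to compare your argument against.

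That said, your outline is a faithful sketch of Cohen's classical construction: building coefficient subrings $R_m\subseteq\mathcal{O}_\nu/p^m\mathcal{O}_\nu$ level by level using the $p^M$-th power map to pin down the ``perfect part'' and then adjoining the chosen lifts of the $p$-basis, and passing to the inverse limit. You correctly identify that the technical weight lies in the inductive lifting step and the role of $p$-independence in keeping each $R_m$ flat over $\mathbb{Z}/p^m\mathbb{Z}$ with the right residue field; this is exactly the content of the cited theorems. One minor caution: the reverse implication in (2), namely $R=R'\Rightarrow B=B'$, does not follow from the level-by-level description as you suggest --- a single strict Cohen subring $R$ contains many lifts of a given $p$-basis (any two differ by elements of $pR$), so as literally stated this direction is delicate and your one-line dismissal of it is not justified. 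The direction that matters for the paper is $B=B'\Rightarrow R=R'$, which your uniqueness argument does address.
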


We also recall the following theorem on lifting of homomorphism of residue fields into a homomorphism of valued fields.
\begin{fact}[{\cite[Lemma 4.1]{ADJ24}}]\label{fact:lifting lemma}
Let $(K,\nu)$ and $(L,\omega)$ be $\mathbb Z$-valued fields of mixed characteristic $(0,p)$. Suppose $K$ is unramified and $L$ is complete. Let $\varphi:K\nu\rightarrow L\omega$ be a field embedding. Let $\beta$ be a tuple of $p$-independent elements in $K\nu$, $B$ a lift of $\beta$ in $K$, and $B'$ a lift of $\varphi(\beta)$ in $L$.

Then, there is an embedding $\Phi:K\rightarrow L$ of valued fields sending $B$ to $B'$, compatible with $\varphi$, that is, given $x\in \mathcal O_{\nu}$, $$\res_{\omega}(\Phi(x))=\varphi(\res_{\nu}(x)).$$
%
%
\end{fact}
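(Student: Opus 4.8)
The plan is to reduce the statement to the case where the residue field embedding is actually an isomorphism onto a subfield, and then build the valued field embedding level by level, using the unramifiedness of $K$ and the completeness of $L$ to lift.

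First I would fix a strict Cohen subring of $\mathcal{O}_\nu$: since $K$ is unramified, $\mathcal{O}_\nu$ itself is a strict Cohen ring, so by Fact \ref{fact:existence of a strict Cohen subring}(1) the chosen lift $B$ of the $p$-independent tuple $\beta$ sits inside $\mathcal{O}_\nu = R(B)$ (extending $\beta$ to a full $p$-basis if necessary, and correspondingly extending $B$, $B'$). On the target side, $L$ need not be unramified, but by Fact \ref{fact:existence of a strict Cohen subring}(1) again the lift $B'$ of $\varphi(\beta)$ (after extension to a $p$-basis of $K\nu$, whose image under $\varphi$ is $p$-independent in $L\omega$, then further extended to a $p$-basis of $L\omega$) is contained in a strict Cohen subring $R' := R(B')$ of $\mathcal{O}_\omega$, and $\mathcal{O}_\omega$ is totally ramified over $R'$. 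Now I want an embedding of strict Cohen rings $\mathcal{O}_\nu = R(B) \hookrightarrow R' \subseteq \mathcal{O}_\omega$ lifting $\varphi|_{K\nu} \colon K\nu \to R'/\mathfrak{m} \cap R' \cong$ (a subfield of $L\omega$) and sending $B$ to $B'$. Such a lift exists by the standard universal property of Cohen rings (Cohen's theorem, \cite{Coh46}): a strict Cohen ring is, in the appropriate sense, freely determined over $\mathbb{Z}_p$ by a $p$-basis of its residue field, so any residue map embedding together with a choice of lift of the $p$-basis extends uniquely to a ring embedding of the Cohen rings. This gives the map on valuation rings; it extends to fraction fields, yielding an embedding of $\mathbb{Z}$-valued fields $K \hookrightarrow L$ compatible with $\varphi$ on residues and sending $B$ to $B'$.

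The key technical step — and the main obstacle — is the existence and compatibility of the Cohen ring lift $R(B) \to R(B')$. One must check that the embedding $\varphi \colon K\nu \to L\omega$, after extending $\beta$ to a $p$-basis $\gamma$ of $K\nu$, carries $\gamma$ to a $p$-independent set in $L\omega$ (this is where one uses that $\varphi$ is an embedding of fields, so it preserves $p$-independence of the chosen basis, at least after passing to $k^p k_0$-degrees), and then that the uniqueness clause of Fact \ref{fact:existence of a strict Cohen subring}(2) pins down $R(B)$ and $R(B')$ as the unique strict Cohen subrings containing the respective lifts. With those in hand, one invokes Cohen's lifting theorem to get a map $R(B) \to \mathcal{O}_\omega$ whose image lies in $R(B')$ (by the uniqueness of the Cohen subring containing $B'$) and which reduces to $\varphi$ mod the maximal ideal. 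Completeness of $L$ is used to guarantee that the Cohen subring $R(B')$ actually embeds into $\mathcal{O}_\omega$ as a subring over which $\mathcal{O}_\omega$ is totally ramified, i.e. that the algebraic data assembled coherently converges inside $L$.

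Finally I would verify the two compatibility assertions: that $\Phi$ is a valuation-preserving field embedding (immediate, since it is built from a local ring embedding $\mathcal{O}_\nu \hookrightarrow \mathcal{O}_\omega$ respecting maximal ideals, extended to fraction fields), and that $\res_\omega \circ \Phi = \varphi \circ \res_\nu$ on $\mathcal{O}_\nu$ (this is exactly the statement that the Cohen ring lift reduces to $\varphi$ modulo the maximal ideals), together with $\Phi(B) = B'$ by construction. The restriction that $K$ be unramified is precisely what makes $\mathcal{O}_\nu$ itself a strict Cohen ring, so that the whole of $K$ (not just a Cohen subring of it) is governed by the lifting theorem; without it one would only embed a Cohen subring and would have to separately handle the totally ramified part, which is the subject of the more refined embedding theorems (Theorem \ref{theorem:lifting lemma}, Lemma \ref{lem:embedding over valued fields}) proved later in Section \ref{sect:lifting via valued hyperfield}.
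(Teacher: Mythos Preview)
The paper does not give its own proof of this statement: it is recorded as a Fact, cited verbatim from \cite[Lemma 4.1]{ADJ24}, with no argument supplied. So there is nothing in the paper to compare your proposal against directly. Your outline is the standard Cohen-ring lifting argument and is essentially what underlies the cited result.

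One genuine wrinkle in your write-up deserves attention. You assert that a field embedding $\varphi:K\nu\to L\omega$ preserves $p$-independence, and you use this to build the strict Cohen subring $R(B')$ on the target side. That assertion is false in general: take $k=\mathbb{F}_p(t)\hookrightarrow k'=\mathbb{F}_p(t^{1/p})$, where $t$ is $p$-independent in $k$ but lies in $(k')^p$. Equivalently, a $p$-basis of a subfield extends to a $p$-basis of the ambient field only when the extension is separable, which is not assumed here. The fix is simply to bypass $R'$ altogether: once $\mathcal O_{\nu}$ (or its completion, if you do not use the subsection's standing completeness hypothesis) is recognised as a strict Cohen ring, Cohen's lifting theorem produces a local homomorphism $\mathcal O_{\nu}\to \mathcal O_{\omega}$ directly into the complete local ring $\mathcal O_{\omega}$, inducing $\varphi$ on residue fields and sending the chosen lift $B$ of a $p$-basis of $K\nu$ to the prescribed tuple $B'$. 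No $p$-independence of $\varphi(\beta)$ in $L\omega$ is needed for this, and the rest of your verification (that the map is valuation-preserving and reduces to $\varphi$) goes through unchanged.
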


Now, we recall several facts on representatives of the residue field in the valuation ring.
\begin{fact}[c.f. {\cite[Lemma 6]{Coh46}}]\label{fact:uniquness of p-power representative}
For $a,b\in \mathcal O_{\nu}$,  $$a\equiv b\pmod{\mathfrak{m}_{\nu}}\Rightarrow a^{p^i}\equiv b^{p^i} \pmod{ \mathfrak{m}_{\nu}^i}$$ for each $i\ge 1$.
\end{fact}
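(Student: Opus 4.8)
The plan is to argue by induction on $i$, with the engine being the elementary fact that, in a valued field of mixed characteristic, raising to the $p$-th power improves a congruence modulo a power of $\mathfrak{m}_\nu$ by one level.

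First I would isolate the sub-claim: if $x,y\in\mathcal{O}_\nu$ and $x\equiv y\pmod{\mathfrak{m}_\nu^{j}}$ for some $j\ge 1$, then $x^{p}\equiv y^{p}\pmod{\mathfrak{m}_\nu^{j+1}}$. To prove it, write $x=y+m$ with $\nu(m)\ge j$ and expand
$$x^{p}=y^{p}+\sum_{k=1}^{p-1}\binom{p}{k}y^{p-k}m^{k}+m^{p}.$$
For $1\le k\le p-1$ the coefficient $\binom{p}{k}$ is divisible by $p$, so $\nu\!\left(\binom{p}{k}\right)\ge\nu(p)\ge 1$ because the characteristic is mixed; combined with $\nu(m^{k})=k\nu(m)\ge kj\ge j$ and $\nu(y)\ge 0$, each middle term has valuation $\ge j+1$. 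The last term satisfies $\nu(m^{p})=p\,\nu(m)\ge pj\ge j+1$ since $p\ge 2$ and $j\ge 1$. Hence $x^{p}-y^{p}\in\mathfrak{m}_\nu^{j+1}$.

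The main statement then follows: for $i=1$, from $a\equiv b\pmod{\mathfrak{m}_\nu}$ the sub-claim (with $j=1$) already gives $a^{p}\equiv b^{p}\pmod{\mathfrak{m}_\nu^{2}}$, a fortiori modulo $\mathfrak{m}_\nu$; and if $a^{p^{i}}\equiv b^{p^{i}}\pmod{\mathfrak{m}_\nu^{i}}$, applying the sub-claim with $x=a^{p^{i}}$, $y=b^{p^{i}}$, $j=i$ yields $a^{p^{i+1}}=(a^{p^{i}})^{p}\equiv (b^{p^{i}})^{p}=b^{p^{i+1}}\pmod{\mathfrak{m}_\nu^{i+1}}$, closing the induction.

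There is no real obstacle here; the only points needing a moment's attention are the divisibility $p\mid\binom{p}{k}$ for $0<k<p$, the fact that mixed characteristic forces $p\in\mathfrak{m}_\nu$ (hence $\nu(p)\ge 1$), and the normalization $\mathfrak{m}_\nu^{j}=\{x\in\mathcal{O}_\nu:\nu(x)\ge j\}$ in the $\mathbb{Z}$-valued setting. With these in hand the valuation bookkeeping is routine, and the case $p=2$ needs no special treatment because $pj\ge j+1$ already holds for $j\ge 1$.
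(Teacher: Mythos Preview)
Your argument is correct and is exactly the classical proof: the binomial expansion together with $p\mid\binom{p}{k}$ for $0<k<p$ and $p\in\mathfrak{m}_\nu$ shows that a congruence modulo $\mathfrak{m}_\nu^{j}$ improves to one modulo $\mathfrak{m}_\nu^{j+1}$ upon raising to the $p$th power, and induction finishes. The paper does not supply its own proof here but simply cites \cite[Lemma~6]{Coh46}, whose argument is essentially the one you give; there is nothing to add.
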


\begin{fact}[c.f. {\cite[Lemma 7]{Coh46}} and {\cite[Lemma 3.4]{AJ22}}]\label{fact:uniqune representative}
There is a unique choice of representatives $$\lambda_{i+1}:(K\nu)^{(p^i)}\rightarrow \mathcal O_{\nu,i+1}^{(p^i)},\alpha^{p^i}\mapsto a^{p^i}+\mathfrak{m}_{\nu}^{i+1}$$ where $a\in R$ with $\alpha=\res(a)$ for each $i\ge 0$.
\end{fact}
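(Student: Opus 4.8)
The plan is to isolate the real content of the statement, which is \emph{well-definedness}: one must check that the class $a^{p^{i}}+\mathfrak{m}_{\nu}^{i+1}\in\mathcal O_{\nu,i+1}$ depends only on $\alpha^{p^{i}}$ and not on the particular lift $a\in R$ of $\alpha$. Granting this, the map $\lambda_{i+1}$ is completely determined by the prescription $\lambda_{i+1}(\res(a)^{p^{i}})=a^{p^{i}}+\mathfrak{m}_{\nu}^{i+1}$, so uniqueness of the system of representatives is automatic, and $\res(\lambda_{i+1}(\gamma))=\res(a)^{p^{i}}=\gamma$ shows it really is a system of representatives with values in $\mathcal O_{\nu,i+1}^{(p^{i})}$. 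The case $i=0$ is trivial, since $\lambda_{1}$ is then the identity on $K\nu=\mathcal O_{\nu,1}$, so I would assume $i\ge 1$ from now on.

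The first step is to move the comparison from $\mathcal O_{\nu}$ into the Cohen subring $R$. Since $R$ is unramified of mixed characteristic $(0,p)$, its maximal ideal is $pR$, and the restriction $\res_{\nu}|_{R}\colon R\to K\nu$ is surjective with kernel $\mathfrak{m}_{\nu}\cap R=pR$; hence any two lifts $a,b\in R$ of the same residue $\alpha$ satisfy $a=b+pc$ for some $c\in R$. Working inside $R$ rather than inside $\mathcal O_{\nu}$ is the crucial point: two lifts of $\alpha$ in $\mathcal O_{\nu}$ agree only modulo $\mathfrak{m}_{\nu}$, and Fact \ref{fact:uniquness of p-power representative} then controls their $p^{i}$-th powers only modulo $\mathfrak{m}_{\nu}^{i}$, which falls exactly one step short of what is needed.

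The arithmetic core is the elementary congruence $(b+pc)^{p^{i}}\equiv b^{p^{i}}\pmod{p^{i+1}}$, valid in any commutative ring, which I would prove by induction on $i$: for $i=1$ the binomial theorem together with $p\mid\binom{p}{k}$ for $0<k<p$ shows that every non-leading term of $(b+pc)^{p}$ is divisible by $p^{2}$, and the inductive step substitutes $b^{p^{i}}+p^{i+1}d$ into a $p$-th power and repeats the same estimate. Since $p\in\mathfrak{m}_{\nu}^{e}$ with $e\ge 1$, this gives $a^{p^{i}}-b^{p^{i}}\in p^{i+1}R\subseteq\mathfrak{m}_{\nu}^{e(i+1)}\subseteq\mathfrak{m}_{\nu}^{i+1}$, that is, $a^{p^{i}}\equiv b^{p^{i}}\pmod{\mathfrak{m}_{\nu}^{i+1}}$ whenever $a,b\in R$ have the same residue. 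Finally, because $K\nu$ has characteristic $p$, the $p^{i}$-power map on $K\nu$ is injective, so every $\gamma\in(K\nu)^{(p^{i})}$ has a unique $p^{i}$-th root $\alpha$; choosing any $a\in R$ lifting $\alpha$ and setting $\lambda_{i+1}(\gamma):=a^{p^{i}}+\mathfrak{m}_{\nu}^{i+1}$ then completes the construction. The only genuine obstacle is the bookkeeping in these two binomial estimates, together with the conceptual point of insisting that lifts be compared inside the unramified ring $R$ and not merely in $\mathcal O_{\nu}$.
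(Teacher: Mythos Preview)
The paper does not supply a proof here; the fact is cited from Cohen and Anscombe--Jahnke. Your argument is correct and self-contained, and the binomial estimate $(b+pc)^{p^{i}}\equiv b^{p^{i}}\pmod{p^{i+1}}$ together with $p^{i+1}R\subseteq\mathfrak{m}_{\nu}^{i+1}$ is exactly what is needed once lifts are constrained to lie in $R$.

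One comment on the word ``unique''. The phrase \emph{unique choice of representatives} is asserting that the residue map $\mathcal{O}_{\nu,i+1}^{(p^{i})}\to (K\nu)^{(p^{i})}$ is a bijection, so uniqueness means that \emph{any} $c\in\mathcal{O}_{\nu}$ with $\res(c)=\alpha$ satisfies $c^{p^{i}}\equiv a^{p^{i}}\pmod{\mathfrak{m}_{\nu}^{i+1}}$, not only lifts $c\in R$. In particular this is what makes $\lambda_{i+1}$ independent of the choice of strict Cohen subring. Your argument, restricted to lifts in a fixed $R$, establishes well-definedness of the formula but not this stronger uniqueness. The missing step is the sharp form of Fact~\ref{fact:uniquness of p-power representative}, namely $a\equiv b\pmod{\mathfrak{m}_{\nu}}\Rightarrow a^{p^{i}}\equiv b^{p^{i}}\pmod{\mathfrak{m}_{\nu}^{i+1}}$: factor $a^{p}-b^{p}=(a-b)\sum_{j}a^{j}b^{p-1-j}$ and note the second factor is $\equiv pa^{p-1}\pmod{\mathfrak{m}_{\nu}}$, then iterate. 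This is precisely how the paper argues in the parallel Remark~\ref{rem:representatives on H_n}, where lifts are taken in $S_{\nu}$ rather than in $R$. Your route through $R$ is an elegant workaround for the exponent in Fact~\ref{fact:uniquness of p-power representative} as stated, but the direct estimate in $\mathcal{O}_{\nu}$ gives well-definedness, uniqueness, and independence from $R$ simultaneously.
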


\begin{remark}\label{rem:choice of respresentative in cohen subring}
For a strict Cohen subring $R$ of $\mathcal O_{\nu}$, the restriction $\res_{\nu}|_{R}:R\rightarrow K\nu$ is surjective and so we have $$\Im(\lambda_{l+1})=\{c^{p^l}\pmod{\mathfrak{m}^{l+1}}:c\in R\}.$$ 
\end{remark}

Given $m\ge 1$ and $l\ge 0$, let $$P_{m,l}:=\{(i_1,\ldots,i_m):0\le i_1,i_2,\ldots,i_m<p^l\}$$ be the set of all multi-indices in $m$-many non-negative integers less than $p^l$. Given $m$-tuple $\bar b:=(b_1,\ldots,b_m)$ in a ring and a multi-index $I=(i_1,\ldots,i_m)\in P_{m,l}$, we write $$\bar b^I:=b_1^{i_1}\cdots b_m^{i_m}.$$
\begin{fact}[c.f. {\cite[Proposition 3.6]{AJ22}}]\label{fact:generated by p-basis and representatives}
Fix $l\ge 0$. Let $\lambda_{l+1}:(K\nu)^{(p^l)}\rightarrow \mathcal O_{\nu,l+1}^{(p^l)}$ be the unique surjective choice of representatives and let $B$ be a tuple of representatives of a fixed $p$-basis of $K\nu$. Let $R$ be the strict Cohen subring of $\mathcal O_{\nu}$ containing $B$.
\begin{enumerate}
	\item $\mathcal O_{\nu,l+1}$ is generated by $\Im(\lambda_{l+1})\cup \res_{l+1}[B] \cup\{\res_{l+1}(\pi)\}$.
More precisely, for each $a\in \mathcal O_{\nu}$, there are
\begin{itemize}
	\item a tuple $\bar b:=(b_1,\ldots,b_m)\in B^m$ for some $m\ge 1$; and
	\item tuples $\bar \alpha_i:=(\alpha_{i,I})_{I\in P_{m,l}}\in (K\nu)^{|P_{m,l}|}$ for $i\le l$;
\end{itemize} 
such that $$a\equiv \sum_{i\le l} \left(\sum_{I\in P_{m,l}}\lambda_{l+1}(\alpha_{i,I})^{p^l} \bar b^I\right)\pi^i \pmod{\mathfrak{m}_{\nu}^{l+1}}.$$
Equivalently, there are tuples $\bar a_i:=(a_{i,I})_{I\in P_{m,l}}\in \mathcal O_{\nu}^{|P_{m,l}|}$ for $i\le l$ such that $$a\equiv \sum_{i\le l} \left(\sum_{I\in P_{m,l}}a_{i,I}^{p^l} \bar b^I\right)\pi^i \pmod{\mathfrak{m}_{\nu}^{l+1}}.$$

	\item The subring $R\pmod{\pi^{l+1}}$ of $\mathcal O_{\nu,l+1}$ is generated by $$\Im(\lambda_{l+1})\cup \res_{l+1}[B].$$ More precisely, let $s\ge 0$ with $\nu(p^s)<l+1\le \nu(p^{s+1})$, and then for each $a \in R$, there are
\begin{itemize}
	\item a tuple $\bar b:=(b_1,\ldots,b_m)\in B^m$ for some $m\ge 1$; and
	\item tuples $\bar \alpha_i:=(\alpha_{i,I})_{I\in P_{m,l}}\in (K\nu)^{|P_{m,l}|}$ for $i\le s$;
\end{itemize} 
such that $$a\equiv \sum_{i\le s} \left(\sum_{I\in P_{m,l}}\lambda_{l+1}(\alpha_{i,I})^{p^l} \bar b^I\right)p^i \pmod{\mathfrak{m}_{\nu}^{l+1}}.$$
Equivalently, there are tuples $\bar a_i:=(a_{i,I})_{I\in P_{m,l}}\in R^{|P_{m,l}|}$ for $i\le l$ such that $$a\equiv \sum_{i\le s} \left(\sum_{I\in P_{m,l}}a_{i,I}^{p^l} \bar b^I\right)p^i \pmod{\mathfrak{m}_{\nu}^{l+1}}.$$

\end{enumerate}
\end{fact}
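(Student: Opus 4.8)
The plan is to treat both parts by one and the same digit-by-digit successive approximation, changing only which expansion is used: for (1) the $\pi$-adic expansion in $\mathcal O_\nu$ (available because $(K,\nu)$ is $\mathbb Z$-valued, so $\mathfrak m_\nu^j=\pi^j\mathcal O_\nu$), and for (2) the $p$-adic expansion inside $R$ (available because a strict Cohen ring of $\mathcal O_\nu$ is a complete discrete valuation ring with uniformizer $p$ and residue field $K\nu$). Once the two normal forms are established, the generation statements are read off them directly. The inputs I would use, beyond the Cohen-ring facts already recalled, are: $B\subseteq R$ together with the surjectivity of $\res_\nu|_R\colon R\to K\nu$ (Fact~\ref{fact:existence of a strict Cohen subring}, Remark~\ref{rem:choice of respresentative in cohen subring}); the standard property of a $p$-basis --- obtained by iterating the defining condition $l$ times --- that every $\alpha\in K\nu$ can be written as a $(K\nu)^{p^l}$-linear combination $\sum_{I\in P_{m,l}}\gamma_I^{p^l}\bar\beta^I$ of the monomials in \emph{finitely many} basis elements $\bar\beta$; and, to move between the form with coefficients in $\mathcal O_\nu$ and the one written with $\lambda_{l+1}$, Fact~\ref{fact:uniquness of p-power representative} (replacing a coefficient by an element of $R$ with the same residue changes its $p^l$-th power only modulo $\mathfrak m_\nu^{l+1}$) together with the very definition of $\lambda_{l+1}$ from Fact~\ref{fact:uniqune representative}.

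For (1), I would prove by induction on $t=0,1,\dots,l+1$ that every $a\in\mathcal O_\nu$ admits a congruence $a\equiv\sum_{i<t}\bigl(\sum_{I}a_{i,I}^{p^l}\bar b^{I}\bigr)\pi^i\pmod{\mathfrak m_\nu^t}$ for a suitable finite subtuple $\bar b$ of $B$ and coefficients $a_{i,I}\in\mathcal O_\nu$. The case $t=0$ is empty. For the inductive step, given such a representation modulo $\mathfrak m_\nu^t$, I write the difference as $\pi^t a'$ with $a'\in\mathcal O_\nu$, expand $\res_\nu(a')=\sum_{J}\delta_J^{p^l}\bar\gamma^J$ by the $p$-basis property, lift $\bar\gamma$ to a finite subtuple $\bar c$ of $B$ and each $\delta_J$ to some $d_J\in\mathcal O_\nu$, so that $a'\equiv\sum_{J}d_J^{p^l}\bar c^{J}\pmod{\mathfrak m_\nu}$ and hence $\pi^t a'\equiv\pi^t\sum_{J}d_J^{p^l}\bar c^{J}\pmod{\mathfrak m_\nu^{t+1}}$. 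Merging $\bar b$ and $\bar c$ into one finite subtuple of $B$ and padding all multi-indices by zeros in the new coordinates, the correction occupies the previously empty $\pi^t$-slot and yields the required form modulo $\mathfrak m_\nu^{t+1}$. Taking $t=l+1$ and using $\mathfrak m_\nu^{l+1}=\pi^{l+1}\mathcal O_\nu$ gives the ``equivalently'' form of (1); passing to the $\lambda_{l+1}$-form and reading off the stated generators then finishes (1).

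Part (2) would be the same argument with $(R,p)$ in place of $(\mathcal O_\nu,\pi)$: by induction on $t=0,\dots,s+1$, every $a\in R$ satisfies $a\equiv\sum_{i<t}\bigl(\sum_{I}a_{i,I}^{p^l}\bar b^{I}\bigr)p^i\pmod{p^tR}$ with $\bar b$ a finite subtuple of $B$ and $a_{i,I}\in R$; in the inductive step the difference equals $p^t a'$ with $a'\in R$ (everything in sight already lies in $R$), and, since $\res_\nu|_R$ is onto, all lifts can be taken inside $R$, so the coefficients stay in $R$ throughout. At $t=s+1$ the hypothesis $\nu(p^{s})<l+1\le\nu(p^{s+1})$ gives $p^{s+1}R\subseteq p^{s+1}\mathcal O_\nu=\mathfrak m_\nu^{\nu(p^{s+1})}\subseteq\mathfrak m_\nu^{l+1}$, so the congruence holds modulo $\mathfrak m_\nu^{l+1}$; the generation statement follows, and no generator $\res_{l+1}(\pi)$ is needed here because $\res_{l+1}(p)$ already lies in the subring generated by $\Im(\lambda_{l+1})$ (which contains $1=\lambda_{l+1}(1)$, hence $\mathbb Z\cdot 1$).

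The point I expect to require the most care is that one cannot shortcut this by merely matching residues and absorbing the error into a \emph{single} $p^l$-th-power coefficient block: $\Im(\lambda_{l+1})$ is not closed under addition, since raising $a+b$ to the $p^l$-th power produces cross terms that are only divisible by $p$ and hence need not lie in $\mathfrak m_\nu^{l+1}$. The successive-approximation scheme sidesteps this precisely because each correction is placed in a fresh, strictly higher $\pi$-power (respectively $p$-power) slot, and modulo $\mathfrak m_\nu^{l+1}$ only the slots $\pi^0,\dots,\pi^l$ (respectively $p^0,\dots,p^s$) survive, so the induction terminates after finitely many steps. What remains is bookkeeping with the multi-index notation --- merging finite subtuples of $B$ and padding indices --- together with the cited $p$-basis property that supplies those finite subtuples in the first place.
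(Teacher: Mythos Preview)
Your proposal is correct and follows essentially the same successive-approximation scheme as the paper's proof: the paper only writes out part~(2), and its induction on $i_0$ (dividing the running error by $p^{i_0+1}$ inside $R$, then using Remark~\ref{rem:choice of respresentative in cohen subring} to produce the next block of $p^l$-th-power coefficients) is exactly your digit-by-digit argument with $(R,p)$. You are simply more explicit than the paper about invoking the iterated $p$-basis expansion to get each residue in the form $\sum_I \gamma_I^{p^l}\bar\beta^I$, and about the bookkeeping of merging the finite subtuples of $B$ from different steps; the paper silently fixes $\bar b$ and $m$ from the outset.
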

\begin{proof}
We provide a proof of $(2)$. We choose inductively tuples $\bar a_{i}:=(a_{i,I})_{I\in P_{m,l}}\in R^{|P_{m,l}|}$ such that $$a-\sum_{i\le i_0}\left(\sum_{I\in P_{m,l}}a_{i,I}^{p^l}\bar b^I\right)p^i\in p^{i_0+1}R$$ for each $i_0\le s$. For $i_0=0$, by Remark \ref{rem:choice of respresentative in cohen subring}, there is a tuple $\bar a_0:=(a_{i,I})_{I\in P_{m,l}}\in R^{|P_{m,l}|}$ such that $$a=\sum_{I\in P_{m,l}}a_{0,I}^{p^l}\bar b^I\pmod{\mathfrak{m}},$$ and so $a-\sum_{I\in P_{m,l}}a_{0,I}^{p^l}\bar b^I\in \mathfrak{m}_{\nu}\cap R=pR$. Suppose we have chosen tuples $\bar a_{0},\ldots,\bar a_{i_0}$ such that $$a-\sum_{i\le i_0}\left(\sum_{I\in P_{m,l}}a_{i,I}^{p^l}\bar b^I\right)p^i\in p^{i_0+1}R$$ for $i_0<s$. Since $\left(a-\sum_{i\le i_0}\left(\sum_{I\in P_{m,l}}a_{i,I}^{p^l}\bar b^I\right)p^i\right)/p^{i_0+1}\in R$, by Remark \ref{rem:choice of respresentative in cohen subring}, there is a tuple $\bar a_{i_0+1}:=(a_{i_0+1,I})_{I\in P_{m,l}}\in R^{|P_{m,l}|}$ such that
\begin{align*}
&\left(a-\sum_{i\le i_0}\left(\sum_{I\in P_{m,l}}a_{i,I}^{p^l}\bar b^I\right)p^i\right)/p^{i_0+1}-\sum_{I\in P_{m,l}}a_{i_0+1,I}^{p^l}\bar b^I\in pR\\
\Leftrightarrow&\ a-\sum_{i\le i_0}\left(\sum_{I\in P_{m,l}}a_{i,I}^{p^l}\bar b^I\right)p^i-\left(\sum_{I\in P_{m,l}}a_{i_0+1,I}^{p^l}\bar b^I\right)p^{i_0+1}\in p^{(i_0+1)+1}R\\
\Leftrightarrow&\ a-\sum_{i\le i_0+1}\left(\sum_{I\in P_{m,l}}a_{i,I}^{p^l}\bar b^I\right)p^i\in p^{(i_0+1)+1}R
\end{align*}
and so the chosen tuples $\bar a_{\le i_0+1}$ are desired ones.
\end{proof}

\subsection{Valued hyperfields}\label{subsect:valued hyperfield}
First, we introduce a notion of valued hyperfields.
\begin{definition}\label{def:vhf}\cite[Definition 1.2 and 1.4]{Kra57}
\begin{enumerate}
\item A {\em hyeprfield} is an algebraic structure $(H,+,\cdot,0,1)$ such that for $H^{\times}:=H\setminus\{0\}$, $(H^{\times},\cdot,1)$ is an abelian group and there is a multivalued operation $+:\ H\times H\rightarrow 2^{H}$ for the power set $2^H$ of $H$ satisfying the following:
\begin{enumerate}
	\item $0\cdot \alpha=0$ for all $\alpha\in H$.
	\item (Associative) $(\alpha+\beta)+\gamma=\alpha+(\beta+\gamma)$ for all $\alpha,\beta,\gamma\in H$. 
	\item (Commutative) $\alpha+\beta=\beta+\alpha$ for all $\alpha,\beta\in H$.
	\item (Distributive) $(\alpha+\beta)\cdot \gamma\subset \alpha\cdot\gamma+\beta\cdot\gamma$ for all $\alpha,\beta,\gamma\in H$. 
	\item (Identity) $\alpha+0=\{\alpha\}$ for all $\alpha\in H$.
	\item (Inverse) For any $\alpha\in H$, there is a unique $-\alpha\in H$ such that $0\in \alpha+(-\alpha)$.
	\item For all $\alpha,\beta,\gamma\in H$, $\alpha\in \beta+(-\gamma)$ if and only if $\beta\in \alpha+\gamma$.	 
\end{enumerate}
	
	\item A {\em valued hyperfield} is a hyperfield $(H,+,\cdot,0,1)$ equipped with a map $\nu$ from $H$ to $\Gamma\cup\{\infty\}$ for an ordered abelian group $\Gamma$ such that 
	\begin{enumerate}
		\item For $\alpha\in H$, $\nu(\alpha)=\infty$ if and only if $\alpha=0$;
		\item For all $\alpha,\beta\in H$, $\nu(\alpha\cdot\beta)=\nu(\alpha)+\nu(\beta)$;
		\item For all $\alpha,\beta\in H$ and $\gamma\in \alpha+\beta$, $\nu(\gamma)\ge \min\{\nu(\alpha),\nu(\beta)\}$;
		\item For all $\alpha,\beta\in H$, $\nu(\alpha+\beta)$ consists of a single element unless $0\in \alpha+\beta$; and
		\item There is $\rho_H\in \Gamma$ such that either $\alpha+\beta$ is a closed ball of radius $\rho_H+\min\{\nu(\alpha),\nu(\beta)\}$ for all $\alpha,\beta\in H$, or $\alpha+\beta$ is a open ball of radius $\rho_H+\min\{\nu(\alpha),\nu(\beta)\}$ for all $\alpha,\beta\in H^{\times}$.
	\end{enumerate}
\end{enumerate}
For $B\subset H$ and $\alpha\in H$, define $\alpha+ B:=\bigcup_{\beta \in B}\alpha+ \beta$(*). The associativity of $+$ means that given $\alpha,\beta,\gamma\in H$, we have $(\alpha+\beta),(\beta+ \gamma)\subset H$ and $\alpha+(\beta+\gamma)=(\alpha+\beta)+\gamma$ in the sense of $(*)$.
\end{definition}
\noindent For $\alpha_0,\ldots, \alpha_k \in H$, we write $\hsum \alpha_i$ for $(\alpha_0+\cdots+ \alpha_k)\subset H$. Since the multivalued operation $+$ is associative, the notion of $\hsum$ is well-defined.
\begin{definition}\label{def:morphism_vhf}\cite[Definition 2.5]{Lee20}
Let $(H_i,+_i,\cdot_i,0_i,1_i,\nu_i)$ be a valued hyperfield for $i=1,2$. A map $f$ from $H_1$ to $H_2$ is called a {\em homomorphsim} if the following hold:
\begin{enumerate}
	\item $f(0_1)=0_2$ and $f(1_1)=1_2$.
	\item $f(\alpha\cdot_1 \beta)=f(\alpha)\cdot_2 f(\beta)$ for all $\alpha,\beta \in H_1$.
	\item $f(\alpha+_1\beta)\subset f(\alpha)+_2 f(\beta)$ for all $\alpha,\beta\in H_1$.
	\item For all $\alpha,\beta \in H_1$, $$\nu_1(\alpha)\le \nu_1(\beta)\Leftrightarrow\nu_2(f(\alpha_1))\le \nu_2(f(\beta)).$$
\end{enumerate}
\end{definition}

\begin{definition}\label{def:vhf_valuedhyperfield}\cite[Definition 2.7]{Lee20}
Given $n\ge 1$, the {\em $n$th valued hyperfield} of $(K,\nu)$ is a hyperfield $(K/(1+\fm_{\nu}^{n}),\hp,\hm)$, denoted by $\H_{\nu,n}$, such that for each $a(1+\fm_{\nu}^{n}),b(1+\fm_{\nu}^n)\in K/1+\fm_{\nu}^n$,
	\begin{enumerate}
		\item $a(1+\fm_{\nu}^{n})\hm b(1+\fm_{\nu}^{n}):=ab(1+\fm_{\nu}^{n})$,
		\item $a(1+\fm_{\nu}^{n})\hp b(1+\fm_{\nu}^{n}):=\{(x+y)(1+\fm_{\nu}^{n})|\ x\in a(1+\fm_{\nu}^{n}),y\in b(1+\fm_{\nu}^{n})\}$, and
	\end{enumerate}
Conventionally, we write $0$ for $0(1+\fm_{\nu}^{n})$ and $1$ for $1(1+\fm_{\nu}^{n})$. The valuation $\nu$ of $K$ induces a map $\hn$ on $K/(1+\fm_{\nu}^{n})$ sending $a(1+\fm_{\nu}^{n})$ to $\nu(a)$. We call $(K/(1+\fm_{\nu}^{n}),\hp,\hm,\hn)$ the {\em $n$th valued hyperfield}.
\end{definition}
\noindent For $A\subset K$, let $\H_{\nu,n}(A):=\{a(1+\fm_{\nu}^{n}):\ a\in A\}$ and for $a\in K$, we write $[a]_n$ for $a(1+\fm_{\nu}^{n})\in \H_{\nu,n}$. Let $S_{\nu}$ be the set of units in the valuation ring and the zero, that is, $$S_{\nu}:=\mathcal O^{\times}\cup \{0\}=\{x\in K:\nu(x)=0\mbox{ or } x=0\}.$$  


\begin{definition}\label{def:over_p}
Let $(K,\nu)$ and $(L,\omega)$ be valued fields of mixed characteristic $(0,p)$. For $n\ge 1$, we say a homomorphism $f:\H_{\nu,n}\rightarrow \H_{\omega,n}$ is {\em over $p$} if $f([p]_n)=[p]_n$.
\end{definition}

\begin{fact}[{\cite[Lemma 3.1]{Lee20}}]\label{fact:addition on hyperfields}
Let $a,b\in K$ and $a_0,\ldots,a_k\in K$. Fix $n\ge 1$.
\begin{enumerate}
	\item $\bigcup [a]_{n}=a(1+\mathfrak{m}_{\nu}^n)=\{x\in K:\nu(x-a)\ge n+\nu(a)\}$.
	\item 
	$\bigcup [a]_n+_{\mathcal H} [b]_n=
	\begin{cases}
	(a+b)(1+\mathfrak{m}_{\nu}^n)&\mbox{if }\nu(a+b)-\min\{\nu(a),\nu(b)\}=0,\\
	(a+b)+\mathfrak{m}_{\nu}^{n+\min\{\nu(a),\nu(b)\}}&\mbox{if }0<\nu(a+b)-\min\{\nu(a),\nu(b)\}\le n,\\
	\mathfrak{m}_{\nu}^{n+\min\{\nu(a),\nu(b)\}}&\mbox{if } n<\nu(a+b)-\min\{\nu(a),\nu(b)\}.
	\end{cases}
	$
	\item $0\in\bigcup [a]_n+_{\mathcal H} [b]_n$ if and only if $\bigcup [a]_n+_{\mathcal H} [b]_n=\mathfrak{m}_{\nu}^{n+\min\{\nu(a),\nu(b)\}}$.
	\item $(a_0+\cdots +a_k)\in \sum^{\mathcal H}[a_i]_n$.
	\item Suppose $b\in \bigcup \sum^{\mathcal H}[a_i]_n$ and $a_0,\ldots,a_k\in R$. Then, $$b=(a_0+\cdots+a_k)+d$$ for some $d\in \mathfrak{m}_{\nu}^n$.
\end{enumerate}
\end{fact}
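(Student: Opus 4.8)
The plan is to prove the five items in order, deducing $(3)$--$(5)$ from the single explicit computation carried out in $(2)$. Throughout I use that in the $\mathbb{Z}$-valued setting of this subsection $\mathfrak{m}_{\nu}^{k}=\{x\in K:\nu(x)\ge k\}$, together with the standard conventions at $0$ and $\infty$. Item $(1)$ is immediate from the definition: $[a]_n$ \emph{is} the coset $a(1+\mathfrak{m}_{\nu}^{n})$, so $\bigcup[a]_n$ is that coset regarded as a subset of $K$, and writing a typical member as $a(1+m)=a+am$ with $m\in\mathfrak{m}_{\nu}^{n}$ shows the ``error'' $am$ ranges exactly over $a\mathfrak{m}_{\nu}^{n}=\{x:\nu(x)\ge\nu(a)+n\}$, which is the asserted set (with the degenerate reading $a=0$ giving $\{0\}$).

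For $(2)$ I would first establish the uniform identity
$$\bigcup\big([a]_n+_{\mathcal H}[b]_n\big)=(a+b)+\mathfrak{m}_{\nu}^{\,n+\min\{\nu(a),\nu(b)\}}$$
and then observe that the three displayed cases are only rewritings of the right-hand side. Put $\mu:=\min\{\nu(a),\nu(b)\}$. The core of the identity is
$$\{x+y:\ x\in[a]_n,\ y\in[b]_n\}=(a+b)+\mathfrak{m}_{\nu}^{\,n+\mu}.$$
The inclusion $\subseteq$ is clear, since for $x=a(1+m_1)$, $y=b(1+m_2)$ one has $x+y=(a+b)+(am_1+bm_2)$ with $\nu(am_1+bm_2)\ge\mu+n$; for $\supseteq$ one may assume $a\ne 0$ and, without loss of generality, $\nu(a)\le\nu(b)$, and then given $v$ with $\nu(v)\ge\nu(a)+n$ the choice $x=a+v\in[a]_n$, $y=b\in[b]_n$ works. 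Passing from this set of representatives $c$ to $\bigcup_{c}c(1+\mathfrak{m}_{\nu}^{n})$ only thickens each $c$ by the ball $\mathfrak{m}_{\nu}^{\,\nu(c)+n}$; since each such $c$ lies in $(a+b)+\mathfrak{m}_{\nu}^{\,n+\mu}$ and has $\nu(c)\ge\mu$, this thickening stays inside $(a+b)+\mathfrak{m}_{\nu}^{\,n+\mu}$, while any $w$ in the right-hand ball is itself such a representative $c$ and hence lies in $c(1+\mathfrak{m}_{\nu}^{n})$. Finally, when $\nu(a+b)=\mu$ one has $(a+b)(1+\mathfrak{m}_{\nu}^{n})=(a+b)+\mathfrak{m}_{\nu}^{\,\nu(a+b)+n}=(a+b)+\mathfrak{m}_{\nu}^{\,n+\mu}$, and when $\nu(a+b)>n+\mu$ one has $a+b\in\mathfrak{m}_{\nu}^{\,n+\mu}$ so the coset collapses to $\mathfrak{m}_{\nu}^{\,n+\mu}$; the middle case is the formula verbatim.

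Items $(3)$ and $(4)$ are then quick. For $(3)$, combining $(2)$ with $(1)$: $0\in(a+b)+\mathfrak{m}_{\nu}^{\,n+\mu}$ iff $a+b\in\mathfrak{m}_{\nu}^{\,n+\mu}$ iff $(a+b)+\mathfrak{m}_{\nu}^{\,n+\mu}=\mathfrak{m}_{\nu}^{\,n+\mu}$, since $\mathfrak{m}_{\nu}^{\,n+\mu}$ is an additive subgroup of $K$. For $(4)$ I read the left-hand side as the coset $[a_0+\cdots+a_k]_n$ (the weaker statement $a_0+\cdots+a_k\in\bigcup\sum^{\mathcal H}[a_i]_n$ then follows at once) and induct on $k$: the base case $k=1$ holds because $[a_0+a_1]_n=(a_0+a_1)(1+\mathfrak{m}_{\nu}^{n})$ lies in $[a_0]_n+_{\mathcal H}[a_1]_n$ by definition (take the representatives $a_0$, $a_1$), and for the inductive step, using associativity of $+_{\mathcal H}$ and the hypothesis $[a_1+\cdots+a_k]_n\in\sum^{\mathcal H}_{1\le i\le k}[a_i]_n$, we get $[a_0]_n+_{\mathcal H}[a_1+\cdots+a_k]_n\subseteq[a_0]_n+_{\mathcal H}\sum^{\mathcal H}_{1\le i\le k}[a_i]_n=\sum^{\mathcal H}_{i\le k}[a_i]_n$, and the left side contains $[a_0+\cdots+a_k]_n$ by the base case.

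For $(5)$ I would prove by induction on $k$ the containment $\bigcup\sum^{\mathcal H}_{i\le k}[a_i]_n\subseteq(a_0+\cdots+a_k)+\mathfrak{m}_{\nu}^{\,n+\mu_k}$ where $\mu_k:=\min\{\nu(a_0),\ldots,\nu(a_k)\}$, the cases $k\le 1$ being covered by $(1)$ and $(2)$. For the step from $k$ to $k+1$, apply the hypothesis to $a_1,\ldots,a_{k+1}$: any coset $[c]_n\in\sum^{\mathcal H}_{1\le i\le k+1}[a_i]_n$ admits a representative $c$ with $\nu\big(c-(a_1+\cdots+a_{k+1})\big)\ge n+\min\{\nu(a_1),\ldots,\nu(a_{k+1})\}$, hence $\nu(c)\ge\min\{\nu(a_1),\ldots,\nu(a_{k+1})\}$; then $(2)$ gives $\bigcup\big([a_0]_n+_{\mathcal H}[c]_n\big)=(a_0+c)+\mathfrak{m}_{\nu}^{\,n+\min\{\nu(a_0),\nu(c)\}}$, and both $a_0+c\in(a_0+\cdots+a_{k+1})+\mathfrak{m}_{\nu}^{\,n+\mu_{k+1}}$ and $\min\{\nu(a_0),\nu(c)\}\ge\mu_{k+1}$, so the union over all admissible $c$ stays inside $(a_0+\cdots+a_{k+1})+\mathfrak{m}_{\nu}^{\,n+\mu_{k+1}}$. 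Since $a_0,\ldots,a_k\in R\subseteq\mathcal O_{\nu}$ forces $\mu_k\ge 0$, we get $\mathfrak{m}_{\nu}^{\,n+\mu_k}\subseteq\mathfrak{m}_{\nu}^{n}$, so $b\in\bigcup\sum^{\mathcal H}_{i\le k}[a_i]_n$ yields $b=(a_0+\cdots+a_k)+d$ with $d\in\mathfrak{m}_{\nu}^{n}$, as required. The one place that demands real care --- and the step I expect to be the main obstacle --- is the bookkeeping in $(2)$ and $(5)$ when one replaces a set of representatives in $K$ by the union of the $(1+\mathfrak{m}_{\nu}^{n})$-cosets they determine: one must verify repeatedly that thickening a point $c$ by its \emph{own} ball $\mathfrak{m}_{\nu}^{\,\nu(c)+n}$ never leaves the target ball, which is precisely where the valuation inequality $\nu(\gamma)\ge\min\{\nu(\alpha),\nu(\beta)\}$ for $\gamma\in\alpha+\beta$ is reflected at the level of $K$. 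Once that is set up correctly, the three-way case split in $(2)$ is purely cosmetic.
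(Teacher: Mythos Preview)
Your argument is correct and well organised. Note, however, that in this paper the statement is recorded as a \emph{Fact} with a citation to \cite[Lemma~3.1]{Lee20} and is not proved here; there is no in-paper proof to compare against. Your approach --- establishing the single uniform identity $\bigcup([a]_n+_{\mathcal H}[b]_n)=(a+b)+\mathfrak{m}_{\nu}^{\,n+\min\{\nu(a),\nu(b)\}}$ first and then reading off the trichotomy and items $(3)$--$(5)$ --- is the natural one and matches how the result is typically proved. The only cosmetic point is that the symbol $R$ in item $(5)$ is not introduced in this subsection; your reading $R\subseteq\mathcal O_{\nu}$ (so that $\mu_k\ge 0$) is the intended one.
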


\begin{remark}\label{rem:description of H_n(S)}
For each $n\ge 1$, given $a,b\in S_{\nu}$, $$a\equiv b\pmod{\mathfrak{m}_{\nu}^n}\Leftrightarrow [a]_n=[b]_n.$$
Moreover, $$\mathcal H_{\nu,1}(S)\cong k.$$
\end{remark}
\begin{proof}
Fix $n\ge 1$. Given $a\in S$ with $a\neq 0$, we have
\begin{align*}
[a]_n&=a(1+\mathfrak{m}_{\nu}^n)\\
&=a+a\mathfrak{m}_{\nu}^n\\
&=a+\mathfrak{m}_{\nu}^n,
\end{align*}
and so for $a,b\in S_{\nu}$ with $a,b\neq 0$, $$[a]_n=[b]_n\Leftrightarrow a\equiv b\pmod{\mathfrak{m}^n}.$$ Also, if $a=0$ or $b=0$, and $a\equiv b\pmod{\mathfrak{m}_{\nu}^n}$, then $a=b=0$.\\

For the moreover part, consider a map $\varphi:\mathcal H_{\nu,1}(S_{\nu})\rightarrow k, [x]_1\mapsto x\pmod{\mathfrak{m}_{\nu}}$, which is a well-defined bijection. Also, by Fact \ref{fact:addition on hyperfields}, the map $\varphi$ is a field isomorphism. 
\end{proof}

Next, we give a choice of representatives in $\mathcal H_{\nu,n}(S_{\nu})$ of $K\nu$, analogous to analogous to Fact \ref{fact:uniqune representative}, and using the choice of representatives, we give a description of $\mathcal H_{\nu,n}$, analogous to Fact \ref{fact:generated by p-basis and representatives}.
\begin{remark}\label{rem:representatives on H_n} 
There is a unique choice of representatives $$\eta_{l+1}:k^{(p^l)}\rightarrow \mathcal H_{\nu,l+1}(S_{\nu})^{(p^l)},\ \alpha^{p^l}\mapsto [a^{p^{l}}]_{l+1}$$ where $a\in S_{\nu}$ with $\alpha=\res_{\nu}(a)$ for each $l\ge 0$. Note that $\eta_{l+1}(\alpha^{p^l})=[\lambda_{l+1}(\alpha^{p^l})]_{l+1}$ for $\alpha\in k$.
\end{remark}
\begin{proof}
Take $a,b\in S$ with $\res_{\nu}(a)=\res_{\nu}(b)(=:\alpha\in k)$ and so $a\equiv b\pmod{\mathfrak{m}_{\nu}}$. Then, $a^{p^l},b^{p^l}\in S_{\nu}$ and by Fact \ref{fact:uniquness of p-power representative}, $a^{p^l}\equiv b^{p^l}\pmod{\mathfrak{m}_{\nu}^{l+1}}$. So, we have
\begin{align*}
[a^{p^l}]_{l+1}&=a^{p^l}(1+\mathfrak{m}_{\nu}^{l+1})=a^{p^l}+\mathfrak{m}_{\nu}^{l+1}\\
&=b^{p^l}+\mathfrak{m}_{\nu}^{l+1}=b^{p^l}(1+\mathfrak{m}_{\nu}^{l+1})\\
&=[b^{p^l}]_{l+1},
\end{align*}
which implies the uniqueness of representative of $\alpha^{p^l}$ in $\mathcal H_{\nu,l+1}(S_{\nu})$.
\end{proof}

For model theory of valued hyperfields, we use a language $\mathcal L_{vhf}:=\{0,1,\cdot,+,|\}$ for valued hyperfields, where $0$ and $1$ are constant symbols, $\cdot$ is a binary function symbol, $+$ is a ternary relation, and $|$ is a binary relation. For a valued hyperfield $\mathcal H:=(H,\cdot,+,\nu)$, $1^H$ is interpreted as the identify of the multiplication, $0^{\mathcal H}$ as the identity of the addition, $\cdot^{\mathcal H}$ as the multiplication function on $H$. For $\alpha,\beta,\gamma\in H$, $(\alpha,\beta,\gamma)\in +^{\mathcal H}$ if and only if $\gamma\in \alpha+\beta$, and $(\alpha,\beta)\in |^{\mathcal H}$ if and only if $\nu(\alpha)\le \nu(\beta)$.

For valued hyperfields $(H_1,+_1,\cdot_1,0_1,1_1,\nu_1)$ and $(H_2,+_2,\cdot_2,0_2,1_2,\nu_2)$, a map $f:H_1\rightarrow H_2$ is called a {\em $\mathcal L_{vhf}^+$-homomorphism} if for any positive $\mathcal L_{vhf}$-formula $\theta(x)$ and for any $a\in H_1^{|x|}$, $$H_1\models \theta(a)\Leftrightarrow H_2\models \theta(f(a)).$$ An important point of the language $\mathcal L_{vhf}$ is that for finitely ramified valued fields, a $\mathcal L_{vhf}^+$-homomorphism over $p$ of higher valued hyperfields is a homomorphism over $p$ of higher valued hyperfields, which is crucially used in Subsections \ref{subsect:relative_existential_completness} and \ref{subsect:existential_closedness}.

\begin{remark}\label{rem:L_vhf homomorphism and vhf homomorphism}
For two valued hyperfields $(H_1,+_1,\cdot_1,0_1,1_1,\nu_1)$ and $(H_2,+_2,\cdot_2,0_2,1_2,\nu_2)$, let $f:H_1\rightarrow H_2$ be a $\mathcal L_{vhf}^+$-homomorphism such that there is $\alpha\in H_1$ such that $\nu_1(\alpha)$ and $\nu_2(f(\alpha))$ are positive, and the interval $(0_{\Gamma},\nu_1(\alpha)]$ in the value group is finite. Then, $f$ is a homomorphism between valued hyperfields.
\end{remark}
\begin{proof}
Since $f$ is a $\mathcal L_{val}^{+}$-homomorphism, it is enough to show that given $\alpha_1,\alpha_2\in H_1$, $$\nu_2(f(\alpha_1))\le \nu_2(f(\alpha_2))\Rightarrow \nu_1(\alpha_1)\le \nu_1(\alpha_2).$$ 
Suppose $\nu_2(f(\alpha_1))\le \nu_2(f(\alpha_2))$ and $\nu_1(\alpha_1)>\nu_1(\alpha_2)$. Since $\nu_1(\alpha_1)>\nu_1(\alpha_2)$ and $\nu_1(\alpha)$ is positive, there is $\beta\in H_1$ with $0_{\Gamma}<\nu_1(\beta)\le \nu_1(\alpha)$ such that $\nu_1(\alpha_2\cdot_1\beta)\le \nu_1(\alpha_1)$. Then, we have
\begin{align*}
&\nu_1(\alpha_2)<\nu_1(\alpha_2\cdot_1\beta)\le \nu_1(\alpha_1)\\
\Rightarrow&\nu_2(f(\alpha_2))\le \nu_2(f(\alpha_2))+\nu_2(f(\beta))\le \nu_2(f(\alpha_1))\le \nu_2(f(\alpha_2))\\
\Rightarrow&\nu_2(f(\beta))=0_{\Gamma}.
\end{align*}
Also, since the interval $(0_{\Gamma},\nu_1(\alpha)]$ is finite, there is a positive integer $n$ such that $\nu_1(\alpha)<n\nu_1(\beta)$ and so $0_{\Gamma}< \nu_2(f(\alpha))\le n\nu_2(f(\beta))=0_{\Gamma}$, which is a contradiction.
\end{proof}

\begin{example}
Let $(\mathbb Q,\nu_2)$ be a valued field  with the $2$-adic valuation and $\mathcal H_{\nu_2,1}$ be the first valued hyperfield of $(\mathbb Q,\nu_2)$. Let $(\mathbb F_2:=\{0,1\},\nu)$ be a valued hyperfield with a multivalued addition $+$ defined as follows: 
\begin{itemize}
	\item $0+0=\{0\}$,
	\item $1+0=0+1=\{1\}$, and
	\item $1+1=\{0,1\}$,
\end{itemize}
and the trivial valuation $\nu$, that is, $\nu(1)=0_{\Gamma}$ and $\nu(0)=\infty$. Consider a map $f:\mathcal H_{\nu_2,1}\rightarrow \mathbb F_2$ given as follows: For $\alpha\in \mathcal H_{\nu_2,1}$, $$f(\alpha)=0\Leftrightarrow\alpha=[0]_1.$$ Then, $f$ is a $\mathcal L_{val}^+$-homomorphism but not a homomorphism between valued hyperfields.
\end{example}

Also, the $\mathcal L_{vhf}$-structure of the $n$th valued hyperfield is uniformly existentially definable in $\mathcal L_{val}$, depending only on the residue characteristic $p$, the initial ramification index $e$, and $n$.
\begin{remark}\label{rem:existentially definability of the valued hyperfield}
Let $n\ge 1$ and let $(K,\nu)$ be a valued field of mixed characteristic $(0,p)$ having the initial ramification index $e$. The following relations on $K$ given as follows: Given $x,y,z\in K$, 
\begin{itemize}
	\item $[x]_{n}|[y]_{n}$,
	\item $[z]_{n}=[x]_{n}\cdot_{\mathcal H}[y]_{n}$, 
	\item $[z]_{n}\in [x]_{n}+_{\mathcal H}[y]_n$
\end{itemize}
are existentially definable in $\mathcal L_{val}$ depending only on $p$, $e$, and $n$.

So, given a $\mathcal L_{vhf}$-sentence $\varphi$, there is a $\mathcal L_{val}$-sentence $\tilde{\varphi}$ such that given a finitely ramified valued field $(L,\omega)$ of mixed characteristic $(0,p)$ having the initial ramification index $e$, $$\mathcal H_{\omega,n}\models \varphi\Leftrightarrow (L,\omega)\models \tilde{\varphi}.$$ Also, if $\varphi$ is a positive existential $\mathcal L_{vhf}$-sentence, then $\tilde \varphi$ can be taken as an existential $\mathcal L_{val}$-sentence.
\end{remark} 
\begin{proof}
Note that
$$[x]_{n}|[y]_{n}\Leftrightarrow \nu(x)\le \nu(y)$$
and so the binary relation $[x]_{n}|[y]_{n}$ is existentially definable.

We have
\begin{align*}
&[z]_{n}=[x]_{n}\cdot_{\mathcal H}[y]_{n}\\
\Leftrightarrow &\left(z\neq0\wedge \frac{xy}{z}\in 1+\mathfrak{m}_{\nu}^{n}\right)\vee\left( x=0\vee y=0\rightarrow z=0\right)\\
\Leftrightarrow &\left(z\neq0\wedge \frac{xy}{z}-1\in \mathfrak{m}_{\nu}^{n}\right)\vee\left( x=0\vee y=0\rightarrow z=0\right)\\
\Leftrightarrow &\exists \pi\left(z\neq 0\wedge \nu(\pi^e)=\nu(p)\wedge \nu(\frac{xy}{z}-1)\ge \nu(\pi^{n})\right)\vee\left( x=0\vee y=0\rightarrow z=0\right),
\end{align*}
and so the ternary relation $[z]_{n}=[x]_{n}\cdot_{\mathcal H}[y]_{n}$ is existentially definable.

Also, we have
\begin{align*}
&[z]_{n}\in [x]_{n}+_{\mathcal H}[y]_n\\
\Leftrightarrow&z\in x(1+\mathfrak{m}_{\nu}^n)+y(1+\mathfrak{m}_{\nu}^n)\\
\Leftrightarrow&\exists \pi,u,v\left(\nu(\pi^e)=\nu(p)\wedge \nu(u)\ge \nu(\pi^n)\wedge \nu(v)\ge \nu(\pi^n)\wedge \left(z=x(1+u)+y(1+v)\right)\right),
\end{align*}
and so the ternary relation $[z]_{n}\in [x]_{n}+_{\mathcal H}[y]_n$ is existentially definable.
\end{proof}

We recall some relationship between the higher valued hyperfield, the residue field, and the value group.
\begin{remark}\label{rem:relationship between vf rf vg}
Let $(K,\nu)\subseteq (L,\omega)$ be valued fields. Suppose $\mathcal H_{\nu,n}\preceq_{\exists} \mathcal H_{\omega,n}$ in $\mathcal L_{vfh}$ for some positive integer $n$. Then,
\begin{enumerate}
	\item $\omega L/\nu K$ is torsion-free; and
	\item $L\omega/K\nu$ is separable.
\end{enumerate}
\end{remark}
\begin{proof}
(1) To show that $\omega L/\nu K$ is torsion-free, take $\gamma \in \omega L$ such that for some positive $m>0$, $m\gamma\in \nu K$ and we will show that $\gamma \in \nu K$. There are $\alpha\in \mathcal H_{\omega,n}$ and $\beta\in \mathcal H_{\nu,n}$ such that $\nu_{H}(\alpha)=\gamma$ and $\nu_{\mathcal H}(\alpha^m)=\nu_{\mathcal H}(\beta)$. So, $$\mathcal H_{\omega,n}\models \exists x\left(\nu_{\mathcal H}(x^m)=\nu_{\mathcal H}(\beta)\right).$$ Since $\mathcal H_{\nu,n}\preceq_{\exists}\mathcal H_{\omega,n}$, there is $\alpha_0\in\mathcal H_{\nu,n}$ such that $$\mathcal H_{\omega,n}\models \nu_{\mathcal H}(\alpha_0^m)=\nu_{\mathcal H}(\beta),$$ which implies that $\gamma=\nu_{\mathcal H}(\alpha_0^m)$ is in $\nu K$. Thus, $\omega L/\nu K$ is torsion-free.\\

(2) To show that $L\omega/K\nu$ is separable, that is, $K\nu$ and $(L\omega)^{(p)}$ are linearly disjoint over $(K\nu)^{(p)}$, take $a_1,\ldots,a_k\in \mathcal O_{\nu}^{\times}$ such that $\res_{\omega}(a_1),\ldots,\res_{\omega}(a_k)$ are linearly dependent over $(L\omega)^{(p)}$ and we will show that $\res_{\omega}(a_1),\ldots,\res_{\omega}(a_k)$ are linearly dependent over $(K\nu)^{(p)}$. Suppose $\res_{\omega}(a_1),\ldots,\res_{\omega}(a_k)$ are linearly dependent over $(L\omega)^{(p)}$. Then, there are $b_1,\ldots,b_k\in \mathcal O_{\omega}$ such that $$L\omega\models \res_{\omega}(a_1)\res_{\omega}(b_1)^p+\cdots+\res_{\omega}(a_k)\res_{\omega}(b_k)^p=0.$$ Without loss of generality, we may assume that all of $b_i$'s are in $\mathcal O_{\omega}^{\times}$, that is, $\res_{\omega}(b_i)\neq 0$. Then, we have
\begin{align*}
&L\omega\models \sum_{i\le k }\res_{\omega}(a_i)\res_{\omega}(b_i)^p=0\\
\Leftrightarrow&\mathcal H_{\omega,n}\models 0\in\sum_{i\le k}^{\mathcal H}[a_i]_n[b_i]_n^p\\
\Rightarrow& \mathcal H_{\omega,n}\models \exists \beta_1,\ldots,\beta_k\left(\bigwedge_{i\le k}\nu_{\mathcal H}(\beta_i)=0_{\Gamma} \wedge 0\in \sum_{i\le k}^{\mathcal H}[a_i]_n\beta_i^p \right). 
\end{align*}
Since $\mathcal H_{\nu,n}\preceq_{\exists}\mathcal H_{\omega,n}$, there are $\beta_1',\ldots,\beta_k'\in \mathcal H_{\nu,n}$ such that $$\mathcal H_{\nu,n}\models \bigwedge_{i\le k}\nu_{\mathcal H}(\beta_i')=0_{\Gamma} \wedge 0\in \sum_{i\le k}^{\mathcal H}[a_i]_n(\beta_i')^p,$$ which implies that there are $b_1',\ldots,b_k'\in \mathcal O_{\nu}^{\times}$ such that
\begin{align*}
&\mathcal H_{\nu,n}\models 0\in\sum_{i\le k}^{\mathcal H}[a_i]_n[b_i']_n^p\\
\Rightarrow& K\nu\models \sum_{i\le k }\res_{\omega}(a_i)\res_{\omega}(b_i')^p=0,
\end{align*}
and so $\res_{\omega}(a_1),\ldots,\res_{\omega}(a_k)$ are linearly dependent over $(K\nu)^{(p)}$.
\end{proof}

\subsection{A uniformizer over the strict Cohen subring}\label{subsect:uniformizer}
Given a positive integer $e$, let $$d(e):=e(1+\nu_p(e))$$ where $\nu_p$ is the $p$-adic valuation on the field of rationals with the valuation group $\mathbb Z$.

\begin{fact}[Krasner's lemma]\label{fact:Krasner lemma}
Let $(K,\nu)$ be a henselian valued field and let $a,b\in K^{alg}$. We denote the unique extension of $\nu$ to $K^{alg}$ by $\nu$. Suppose $a$ is separable over $K$. Suppose that for all conjugates $a'(\neq a)$ over $K$, $$\nu(b-a)>\nu(a'-a).$$ Then, $K(a)\subseteq K(b)$.
\end{fact}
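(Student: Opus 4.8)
The plan is to prove the (formally stronger-looking, but equivalent) statement $a\in K(b)$; since $K(b)$ is a field containing $K$ and $a$, this immediately gives $K(a)\subseteq K(b)$. First I would record the one place where henselianity enters: because $(K,\nu)$ is henselian, $\nu$ extends \emph{uniquely} to any algebraic extension of $K$ inside $K^{alg}$. Consequently, for any field $M$ with $K\subseteq M\subseteq K^{alg}$ and any $K$-embedding $\tau\colon M\to K^{alg}$, the valuation $\nu\circ\tau$ is an extension of $\nu|_K$ to $M$, hence equals $\nu|_M$; in particular $\nu(\tau(x))=\nu(x)$ for all $x\in M$.

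Next I would argue by contradiction. Assume $a\notin K(b)$. Since $a$ is separable over $K$, it is separable over $K(b)$, so its minimal polynomial over $K(b)$ has degree $>1$ and has a root $a''\neq a$ in $K^{alg}$; the $K(b)$-isomorphism $K(b)(a)\cong K(b)[X]/(\text{min.\ poly.})\cong K(b)(a'')$ gives a $K(b)$-embedding $\sigma\colon K(b)(a)\to K^{alg}$ with $\sigma(a)=a''$. Because $\sigma$ fixes $b$, we have $\sigma(b-a)=b-a''$, and applying the uniqueness of the valuation extension to $\sigma$ (which fixes $K$) yields
$$\nu(b-a'')=\nu(\sigma(b-a))=\nu(b-a).$$
Moreover $a''$, being the image of $a$ under a $K$-embedding, is a root of the minimal polynomial of $a$ over $K$, i.e.\ a conjugate of $a$ over $K$, and $a''\neq a$; so the hypothesis gives $\nu(b-a)>\nu(a''-a)$.

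Finally I would combine these via the ultrametric inequality: writing $a-a''=(a-b)+(b-a'')$ and using $\nu(a-b)=\nu(b-a)=\nu(b-a'')$,
$$\nu(a-a'')\ \ge\ \min\{\nu(a-b),\,\nu(b-a'')\}\ =\ \nu(b-a)\ >\ \nu(a''-a)\ =\ \nu(a-a''),$$
a contradiction. Hence $a\in K(b)$, so $K(a)\subseteq K(b)$. The only non-elementary step is the invocation of uniqueness of the valuation extension to conclude $\nu\circ\sigma=\nu$; this is precisely where the henselian hypothesis is used, and I would simply cite it from standard valuation theory (e.g.\ Engler--Prestel). Everything else is elementary field theory together with the ultrametric inequality.
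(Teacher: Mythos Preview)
The paper states Krasner's lemma as a \textbf{Fact} without proof, so there is no proof in the paper to compare against. Your argument is the standard proof and is correct: the contradiction step via the ultrametric inequality is fine, and you have correctly identified that the only substantive input from henselianity is the uniqueness of the valuation extension, which forces $\nu\circ\sigma=\nu$ on $K(b)(a)$ for any $K$-embedding $\sigma$.
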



\begin{fact}[{\cite[Section 3]{LL21}}]\label{fact:value_of_different}
Let $(K,\nu)$ be a finitely ramified henselian $\mathbb Z$-valued field of mixed characteristic $(0,p)$ having the initial ramification index $e$. Let $F$ be the quotient field of a strict Cohen subring $R$ of $K$. Let $\pi$ be a uniformizer of $(K,\nu)$ and so it is a zero of an Eisenstein polynomial $P$ of degree $e$ over $R$ (c.f. \cite[Chapter 2]{Lang94}). We denote the unique extension of $\nu$ to $K^{alg}$ by $\nu$. Suppose $\nu$ is normalized, that is, $\nu(p)=1$. 
\begin{enumerate}
	\item For $M_{\nu}(\pi):=\max\{\nu(\pi-\pi'):\pi'\in K^{alg},P(\pi')=0,\ \pi'\neq\pi\}$, $$M_{\nu}\le \frac{d(e)}{e^2},$$ and for any uniformizer $\pi_0$ of $(K,\nu)$, $M_{\nu}(\pi)=M_{\nu}(\pi_0)(=:M_{\nu})$.
	\item For a finite extension $(K',\nu')$ of $K$ with the initial ramification index $e'$, if there is $b\in \mathcal O_{\nu'}$ with $P(b)\in (\mathfrak{m}_{\nu'})^{l+1}$ for some $l\ge e'\frac{d(e)}{e}$, then there is $c\in \mathcal O_{\nu'}$ with $P(c)=0$.
\end{enumerate}
\end{fact}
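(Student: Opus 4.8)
The plan for $(1)$ is to go through the different of $K/F$. First, since $\mathcal{O}_{\nu}$ is totally ramified of degree $e$ over the strict Cohen subring $R$, the powers $1,\pi,\dots,\pi^{e-1}$ form an $R$-basis of $\mathcal{O}_{\nu}$ (they have pairwise distinct $\nu$-values modulo $\mathbb{Z}$), so $\mathcal{O}_{\nu}=R[\pi]$ and $P$ is, up to a unit, the minimal polynomial of $\pi$ over $F$. By the classical formula for the different of a monogenic extension, $\mathfrak{d}_{K/F}=(P'(\pi))$, and factoring $P=\prod_{i}(X-\pi_i)$ over $K^{alg}$ with $\pi_1=\pi$ gives
$$\nu(P'(\pi))=\sum_{i\neq 1}\nu(\pi-\pi_i)\ \geq\ M_{\nu}(\pi)+\frac{e-2}{e},$$
since each of the $e-1$ summands is $\geq\nu(\pi)=1/e$ and one equals $M_{\nu}(\pi)$. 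On the other side, the classical bound for the different of a totally ramified degree-$e$ extension gives (in the normalization $\nu(p)=1$, recalling that $F$ is absolutely unramified) that $\nu(P'(\pi))\leq\frac{e-1}{e}+\nu_p(e)$; the deviation from the tame value $\frac{e-1}{e}$ is wild ramification, exactly the deviation recorded by $d(e)=e(1+\nu_p(e))$. Combining the two displays already bounds $M_{\nu}(\pi)$, and to squeeze out the sharp bound $d(e)/e^{2}$ I would instead analyse the Newton polygon of $Q(X):=P(X+\pi)/X\in\mathcal{O}_{\nu}[X]$ (its roots are the differences $\pi_i-\pi$, its constant and leading coefficients are $P'(\pi)$ and $1$, and its intermediate coefficients are the $P^{(k)}(\pi)/k!$): feeding into that polygon the Eisenstein inequalities for $P$ (lower coefficients of value $\geq 1$, the constant one of value exactly $1$) together with the $p$-adic valuations of the binomial coefficients $\binom{i}{k}$ occurring in $P^{(k)}(\pi)/k!$ bounds its steepest slope, which is $M_{\nu}(\pi)$.

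For the uniformizer-independence in $(1)$, take another uniformizer $\pi_0$ and write $\pi_0=g(\pi)$ with $g\in R[X]$ of degree $<e$ (possible since $\mathcal{O}_{\nu}=R[\pi]$). The monomials $g_k\pi^{k}$ of $g(\pi)$ have pairwise distinct $\nu$-values modulo $\mathbb{Z}$, so $\nu(\pi_0)=\min_k\nu(g_k\pi^{k})$, and $\nu(\pi_0)=1/e$ forces $\nu(g'(0))=\nu(g_1)=0$. For any $F$-embedding $\sigma\colon K\hookrightarrow K^{alg}$ one has $\sigma(\pi_0)-\pi_0=g(\sigma\pi)-g(\pi)=(\sigma\pi-\pi)\,h(\sigma\pi,\pi)$ with $h\in R[X,Y]$ the divided difference of $g$; as $\sigma\pi$ and $\pi$ lie in the maximal ideal, $h(\sigma\pi,\pi)$ reduces to $g'(0)$ in the residue field, hence is a unit, so $\nu(\sigma\pi_0-\pi_0)=\nu(\sigma\pi-\pi)$ for every $\sigma$. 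Taking maxima over $\sigma$ (equivalently, over the roots of the respective Eisenstein polynomials) gives $M_{\nu}(\pi_0)=M_{\nu}(\pi)$.

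For $(2)$ the plan is to run Krasner's Lemma (Fact \ref{fact:Krasner lemma}), which applies since $(K',\nu')$ is henselian (finite over the henselian $(K,\nu)$), fed by the bound of $(1)$. From $P(b)\in\mathfrak{m}_{\nu'}^{l+1}$ and $l\geq e'd(e)/e$ one gets $\nu'(P(b))\geq (l+1)/e'>d(e)/e\geq e\cdot M_{\nu}$. Factoring $P(b)=\prod_i(b-\pi_i)$ over the roots of $P$ gives $\nu'(P(b))=\sum_i\nu'(b-\pi_i)$, while by symmetry of the root set $\nu'(\pi_i-\pi_j)=\nu(\pi_i-\pi_j)\leq M_{\nu}$ for all $i\neq j$. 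If two distinct roots both attained $\max_l\nu'(b-\pi_l)$, that common value would be $\leq\nu'(\pi_i-\pi_j)\leq M_{\nu}$, forcing $\nu'(P(b))\leq e\cdot M_{\nu}$, a contradiction; so, after relabeling, there is a \emph{unique} closest root $\pi_1$, and then $\nu'(b-\pi_1)>\nu'(\pi_1-\pi_i)$ for every $i\neq 1$ (otherwise $\pi_i$ would be at least as close to $b$ as $\pi_1$). Since the $K'$-conjugates of $\pi_1$ lie among $\pi_1,\dots,\pi_e$ and $\pi_1$ is separable over $K'$, Krasner's Lemma gives $\pi_1\in K'(b)=K'$; and $\nu'(\pi_1)=1/e>0$, so $c:=\pi_1\in\mathcal{O}_{\nu'}$ is the root we want.

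The step I expect to be the main obstacle is the sharp estimate in $(1)$ when $p\mid e$: the crude inequality $M_{\nu}(\pi)\leq\nu(P'(\pi))$ overshoots badly (already in the tame case it gives $\frac{e-1}{e}$ instead of the correct $\frac1e$), so the Newton-polygon analysis of $P(X+\pi)/X$ cannot be bypassed, and there the delicate point is tracking the $p$-adic valuations of the binomial coefficients in the higher derivatives of the Eisenstein polynomial and merging them with the Eisenstein inequalities. Once $(1)$ is established, the uniformizer-independence and the Krasner argument of $(2)$ are comparatively routine.
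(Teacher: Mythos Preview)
Your proposal is correct and, for part~(2), follows essentially the same route as the paper: both feed the bound $M_{\nu}\le d(e)/e^{2}$ from~(1) into Krasner's Lemma. The paper's version is marginally more direct: from $\nu(P(b))>d(e)/e$ it takes the closest root $c$ (so $\nu(b-c)\ge \nu(P(b))/e>d(e)/e^{2}\ge M_{\nu}$ by pigeonhole on the factorisation $P(b)=\prod_i(b-\pi_i)$), and then invokes the Galois-invariance of $\nu$ to get $\nu(c-c')\le M_{\nu}$ for every other root $c'$, so Krasner applies immediately---your detour through uniqueness of the closest root is unnecessary but harmless. For part~(1) the paper gives no proof at all (it simply cites \cite[Lemma~3.3, Lemma~3.9]{LL21}), so there is nothing to compare against; your Newton-polygon plan for $Q(X)=P(X+\pi)/X$ is indeed the approach taken in that reference, and you have correctly identified that the crude different bound overshoots in the wild case and that the sharp constant $d(e)/e^{2}$ requires tracking the $p$-adic valuations of the binomial coefficients appearing in the Taylor coefficients $P^{(k)}(\pi)/k!$.
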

\begin{proof}
(1) It comes from \cite[Lemma 3.3, Lemma 3.9]{LL21}.
(2) Since $\nu$ is normalized, if $P(b)\in (\mathfrak{m}_{\nu'})^{l+1}$, then 
$$\nu(P(b))\ge \frac{l+1}{e'}> \frac{d(e)}{e}.$$ So, there is a zero $c$ of $P$ such that $$\nu(b-c)\ge \frac{d(e)}{e^2},$$ and $\nu(b-c)>M_{\nu}$. Since $\pi$ and $c$ are conjugate over $K$, for any conjugate $c'$ of $c$ over $F$, $$\nu(c-c')\le M_{\nu}<\nu(b-c),$$ and so by Krasner's lemma, $$c\in K(b)\subseteq K'.$$
\end{proof}

\section{Lifting of homomorphisms of hyperfields}\label{sect:lifting via valued hyperfield}
Let $(K,\nu)$ and $(L,\omega)$ be complete finitely ramified $\mathbb Z$-valued fields of mixed characteristic $(0,p)$ having the same initial ramification index $e$.

\begin{remark}\label{rem:iso_hypevf_residue_iso}
Fix $l\ge 0$. Let $f:\mathcal H_{\nu,l+1}\rightarrow \mathcal H_{\omega,l+1}$ be a homomorphism over $p$. Then, the morphisms $f$ induces an embedding from $\mathcal H_{\nu,l+1}(S_{\nu})$ to $\mathcal H_{\omega,l+1}(S_{\omega})$ and induces an embedding from $K\nu(\cong \mathcal H_{\nu,1}(S_{\nu}))$ to $L\omega(\cong H_{\omega,1}(S_{\omega}))$, say $\varphi:K\nu\rightarrow L\omega$.

Also, given a tuple $B\subset S_{\nu}$ of representatives of a $p$-basis $\beta$ of $K\nu$, let  $B'\subset S_{\omega}$ be a tuple of representatives of $\varphi(\beta)$ with $f[\mathcal H_{\nu,l+1}(B)]=\mathcal H_{\omega,l+1}(B')$. In this case, we say $B$ and $B'$ are {\em compatible with $f$}.
%
%
\end{remark}

We start with the following lifting lemma to construct an embedding from an unramified complete $\mathbb Z$-valued field into a complete $\mathbb Z$-valued field via the higher valued hyperfield, analogous to \cite[Lemma 4.1]{ADJ24}.

\begin{lemma}\label{lem:lifting_homomorphism_from unramified fields}
Fix $l\ge 0$. Let $(K,\nu)$ and $(L,\omega)$ be complete $\mathbb Z$-valued fields of mixed characteristic $(0,p)$. Suppose $K$ is unramified. Then, we have  the following.
\begin{itemize}
	\item Let $f:\mathcal H_{\nu,l+1}\rightarrow \mathcal H_{\omega,l+1}$ be a homomorphism over $p$ and let $\varphi:K\nu\rightarrow L\omega$ be an embedding induced  by $f$.
	\item Let $B$ be a tuple of representatives of a $p$-basis $\beta$ of $k$, let $B'$ be a tuple of representatives of $\varphi(\beta)$, which are compatible with $f$, and let $\Phi:K\rightarrow L$ be an embedding of valued fields sending $B$ to $B'$, compatible with $\varphi$, which exists by Fact \ref{fact:lifting lemma}.
\end{itemize}
Then, $f$ is induced by $\Phi$, that is, for $a\in K$,  $$f([a]_{l+1})=[\Phi(a)]_{l+1}.$$
\end{lemma}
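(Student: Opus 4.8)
The plan is to reduce the statement to the case where $a$ is a unit, expand that unit by means of the strict Cohen structure of $\mathcal O_\nu$ (which, since $K$ is unramified, is its own strict Cohen subring), and then push the expansion through both $f$ and $\Phi$, closing the small gap caused by $f$ being only a homomorphism --- and not an embedding --- of hyperfields. For the reduction: $K$ and $L$ are $\mathbb Z$-valued and $\Phi$ is an embedding of valued fields with $\Phi(p)=p$, so the map it induces on value groups is multiplication by $\omega(p)\ (\ge 1)$ on $\mathbb Z$; hence $\Phi(\mathcal O_\nu^\times)\subseteq\mathcal O_\omega^\times$ and $\Phi(\fm_\nu^{l+1})\subseteq\fm_\omega^{l+1}$, so $[a]_{l+1}\mapsto[\Phi(a)]_{l+1}$ is a well-defined map $\mathcal H_{\nu,l+1}\to\mathcal H_{\omega,l+1}$, and we must show it equals $f$. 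Both maps are multiplicative and fix $[p]_{l+1}$ ($f$ because it is over $p$), so factoring a nonzero $a$ as $p^{\nu(a)}u$ with $u\in\mathcal O_\nu^\times\subseteq S_\nu$ reduces the problem to proving $f([u]_{l+1})=[\Phi(u)]_{l+1}$ for units $u$; the case $a=0$ is trivial.

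Fix a unit $u$. By the ``equivalently'' form of Fact~\ref{fact:generated by p-basis and representatives}(1) (applicable with $\mathcal O_\nu$ as strict Cohen subring and $p$ as uniformizer, since $K$ is unramified) there are a subtuple $\bar b=(b_1,\dots,b_m)$ of $B$ and tuples $(a_{i,I})_{I\in P_{m,l}}$ in $\mathcal O_\nu$, $i\le l$, with $u\equiv\sum_{i\le l}\bigl(\sum_{I\in P_{m,l}}a_{i,I}^{p^l}\bar b^I\bigr)p^i\pmod{\fm_\nu^{l+1}}$. Discarding each term with $\res_\nu(a_{i,I})=0$ is harmless, since then $\nu(a_{i,I}^{p^l})=p^l\nu(a_{i,I})\ge p^l\ge l+1$ so that term lies in $\fm_\nu^{l+1}$; after this all $c_{i,I}:=a_{i,I}$ are units and, as $u$ is a unit, some surviving term has $i=0$. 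Put $t_{i,I}:=c_{i,I}^{p^l}\bar b^Ip^i$ and $T:=\sum_{(i,I)}t_{i,I}$, a unit with $u\equiv T\pmod{\fm_\nu^{l+1}}$; then $[u]_{l+1}=[T]_{l+1}\in\hsum_{(i,I)}[t_{i,I}]_{l+1}$ by Fact~\ref{fact:addition on hyperfields}(4). Since $f$ respects $\hm$ and carries a finite $\hsum$ into the $\hsum$ of images, $f([u]_{l+1})\in\hsum_{(i,I)}f([t_{i,I}]_{l+1})$ with $f([t_{i,I}]_{l+1})=f([c_{i,I}]_{l+1})^{p^l}\cdot f([\bar b^I]_{l+1})\cdot[p]_{l+1}^{\,i}$. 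Here $f([p]_{l+1})=[p]_{l+1}=[\Phi(p)]_{l+1}$; since $B$ and $B'$ are compatible with $f$ and $f$ induces $\varphi$ (Remark~\ref{rem:iso_hypevf_residue_iso}), a residue comparison --- using $\res_\omega b'_j=\varphi(\res_\nu b_j)=\res_\omega\Phi(b_j)$ and $\varphi$ injective --- forces $f([b_j]_{l+1})=[\Phi(b_j)]_{l+1}$, whence $f([\bar b^I]_{l+1})=[\Phi(\bar b^I)]_{l+1}$; finally $f([c_{i,I}]_{l+1})$ and $[\Phi(c_{i,I})]_{l+1}$ both lie in $\mathcal H_{\omega,l+1}(S_\omega)$ with residue $\varphi(\res_\nu c_{i,I})$, so by the uniqueness of the representatives $\eta_{l+1}$ (Remark~\ref{rem:representatives on H_n}) their $p^l$-th powers coincide: $f([c_{i,I}]_{l+1})^{p^l}=[\Phi(c_{i,I})^{p^l}]_{l+1}$. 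Combining, $f([t_{i,I}]_{l+1})=[\Phi(t_{i,I})]_{l+1}$, so $f([u]_{l+1})\in\hsum_{(i,I)}[\Phi(t_{i,I})]_{l+1}$.

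To conclude I would show that $\hsum_{(i,I)}[\Phi(t_{i,I})]_{l+1}$ is the singleton $\{[\Phi(T)]_{l+1}\}$; together with $\Phi(u)\equiv\Phi(T)\pmod{\fm_\omega^{l+1}}$ this yields $f([u]_{l+1})=[\Phi(u)]_{l+1}$ and finishes the proof. Every $\Phi(t_{i,I})$ lies in $\mathcal O_\omega$, so repeated application of Fact~\ref{fact:addition on hyperfields}(2) gives $\bigcup\hsum_{(i,I)}[\Phi(t_{i,I})]_{l+1}\subseteq\bigl(\sum_{(i,I)}\Phi(t_{i,I})\bigr)+\fm_\omega^{l+1}=\Phi(T)+\fm_\omega^{l+1}$; and since $\Phi(T)$ is a unit, every $c$ in this coset is a unit with $[c]_{l+1}=c+\fm_\omega^{l+1}=\Phi(T)+\fm_\omega^{l+1}=[\Phi(T)]_{l+1}$, so every element of the hyper-sum equals $[\Phi(T)]_{l+1}$.

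I expect this last collapse to be the only genuinely delicate point. Because $f$ satisfies merely $f(\alpha\hp\beta)\subseteq f(\alpha)\hp f(\beta)$, pushing it through a hyper-sum a priori widens the set of possible images, and one has to verify that this widening stays within the $(l+1)$-th level of precision --- which works precisely because the Cohen expansion of the unit $u$ retains a unit leading ($i=0$) term. Everything else is bookkeeping with the strict-Cohen expansion (Fact~\ref{fact:generated by p-basis and representatives}) and with the uniqueness of $p^l$-power representatives (Remark~\ref{rem:representatives on H_n}).
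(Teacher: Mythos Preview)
Your proof is correct and follows essentially the same route as the paper's: both arguments expand a unit via the strict Cohen structure (Fact~\ref{fact:generated by p-basis and representatives}), push the resulting hyper-sum through $f$ using multiplicativity and the uniqueness of $p^l$-th power representatives (Remark~\ref{rem:representatives on H_n}), and then collapse the hyper-sum back to a singleton using that the image of the unit stays a unit. The only cosmetic differences are that the paper works with $a/p^d$ directly and invokes Fact~\ref{fact:addition on hyperfields}(5) for the collapse, whereas you first reduce to units, discard zero-residue terms, and iterate Fact~\ref{fact:addition on hyperfields}(2); these are equivalent.
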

\begin{proof}
Take $a\in K$. By Fact \ref{fact:generated by p-basis and representatives}(2), for $s\ge 0$ with $\nu(p^s)<\l+1\le \nu(p^{s+1})$ and for $d\in \mathbb Z$ with $\nu(a)=\nu(p^d)$, there are
	\begin{itemize}
		\item a tuple $\bar b:=(b_1,\ldots,b_m)\in B^m$ for some $m\ge 1$; and
		\item tuples $\bar \alpha_{i}:=(\alpha_{i,I})_{I\in P_{m,l}}\in (K\nu)^{|P_{m,l}|}$ for $i\le s$;
	\end{itemize}
	such that $$a/p^d\equiv \sum_{i\le s}\left( \sum_{I\in P_{m,I}}\lambda_{l+1}(a_{i,I}^{p^l})\bar b^I\right)p^i \pmod{\mathfrak{m}_{\nu}^{l+1}},$$	
and so we have 
	$$\Phi(a/p^d)\equiv \sum_{i\le s}\left( \sum_{I\in P_{m,I}}\lambda_{l+1}(\varphi(a_{i,I})^{p^l})\Phi(\bar b)^I\right)p^i \pmod{\mathfrak{m}_{\omega}^{l+1}}.$$
Then, by Remark \ref{rem:representatives on H_n}, 
\begin{align*}
[a/p^d]_{l+1}&=[\sum_{i\le s}\left( \sum_{I\in P_{m,I}}\lambda_{l+1}(a_{i,I}^{p^l})\bar b^I\right)p^i]_{l+1}\\
&\in \sum_{i\le s}\left( \sum_{I\in P_{m,I}}[\lambda_{l+1}(a_{i,I}^{p^l})]_{l+1}[\bar b]_{l+1}^I\right)[p]_{l+1}^i.
\end{align*}
By taking $f$,
\begin{align*}
f([a/p^d]_{l+1})&\in \left(\sum_{i\le s}\left( \sum_{I\in P_{m,I}}f(\eta_{l+1}(a_{i,I}^{p^l}))f([\bar b]_{l+1})^I\right)f([p]_{l+1})^i\right)\\
&=\left(\sum_{i\le s}\left( \sum_{I\in P_{m,I}}\eta_{l+1}(\varphi(a_{i,I})^{p^l})[\Phi(\bar b)]_{l+1}^I\right)[p]_{l+1}^i\right).
\end{align*}
So, by Fact \ref{fact:addition on hyperfields} and Remark \ref{rem:description of H_n(S)}, 
\begin{align*}
f([a/p^d]_{l+1})&=[\sum_{i\le s}\left( \sum_{I\in P_{m,I}}\lambda_{l+1}(\varphi(a_{i,I})^{p^l})\Phi(\bar b)^I\right)p^i+d]_{l+1}\\
&=[\Phi(a/p^d)]_{l+1}
\end{align*}
for some $d\in \mathfrak{m}_{\omega}^{l+1}$. Thus, since $f$ is over $p$,
\begin{align*}
f([a]_{l+1})&=f([a/p^d]_{l+1}[p^d]_{l+1})=f([a/p^d]_{l+1})f([p^d]_{l+1})\\
&=[\Phi(a/p^d)]_{l+1}[p^d]_{l+1}=[\Phi(a/p^d)p^d]_{l+1}\\
&=[\Phi\left((a/p^d) p^d\right)]_{l+1}\\
&=[\Phi(a)]_{l+1}.
\end{align*}
\end{proof}

\begin{theorem}\label{theorem:lifting lemma}
Let $(K,\nu)$ and $(L,\omega)$ be complete $\mathbb Z$-valued fields of mixed characteristic $(0,p)$. Suppose $K$ is of the initial ramification index $e$. Let $n$ be a positive integer satisfying that $$\begin{cases}
n>0 &\mbox{if } p\not| e,\\
n>e^2M_{\nu}&\mbox{if } p|e. 
\end{cases}$$
\begin{enumerate}
	\item If there is a homomorphism over $p$ from $\mathcal H_{\nu,n}$ to $\mathcal H_{\omega,n}$, then there is an embedding from $K$ to $L$. Moreover, if $p$ does not divide $e$, then any homomorphism over $p$ from $\mathcal H_{\nu,n}$ to $\mathcal H_{\omega,n}$ is induced from a unique embedding from $K$ to $L$.
	
	\item If $L$ has the initial ramification index $e$, then $\mathcal H_{\nu,n}$ and $\mathcal H_{\omega,n}$ are isomorphic over $p$ if and only if $K$ and $L$ are isomorphic. 
\end{enumerate}
\end{theorem}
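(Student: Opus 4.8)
The plan is to reduce Theorem \ref{theorem:lifting lemma} to the unramified case already handled in Lemma \ref{lem:lifting_homomorphism_from unramified fields}, using the strict Cohen subrings of $\mathcal{O}_\nu$ and $\mathcal{O}_\omega$ together with Krasner's lemma via Fact \ref{fact:value_of_different}. First, fix a homomorphism $f:\mathcal{H}_{\nu,n}\to\mathcal{H}_{\omega,n}$ over $p$. By Remark \ref{rem:iso_hypevf_residue_iso} it induces a field embedding $\varphi:K\nu\to L\omega$; choose a tuple $B\subset S_\nu$ of representatives of a $p$-basis $\beta$ of $K\nu$ and a compatible tuple $B'\subset S_\omega$ of representatives of $\varphi(\beta)$. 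Let $R=R(B)$ be the strict Cohen subring of $\mathcal{O}_\nu$ containing $B$ (Fact \ref{fact:existence of a strict Cohen subring}) with fraction field $F$, and likewise $R'\subset\mathcal{O}_\omega$ containing $B'$ with fraction field $F'$. Since $R$ is an unramified complete $\mathbb{Z}$-valued ring, the homomorphism $f$ restricted appropriately to $\mathcal{H}_{\nu,n}(\hat F)$ (where $\hat F$ is the completion of $F$) lands in $\mathcal{H}_{\omega,n}(\hat F')$ and is over $p$; by Lemma \ref{lem:lifting_homomorphism_from unramified fields} (applied with $l+1=n$ to the unramified complete fields $\hat F$ and $\hat F'$) this restriction is induced by an embedding $\Phi_0:\hat F\to\hat F'$ of valued fields sending $B$ to $B'$ and compatible with $\varphi$. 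The main work is then to extend $\Phi_0$ across the totally ramified extension $K/\hat F$.

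For the extension step, let $\pi$ be a uniformizer of $K$; since $K/\hat F$ is totally ramified of degree $e$, $\pi$ is a root of an Eisenstein polynomial $P$ of degree $e$ over $R\subset\hat F$ (Fact \ref{fact:value_of_different}). Applying $f$ to $[\pi]_n$ and using Fact \ref{fact:addition on hyperfields}, one sees that $f([\pi]_n)=[\varpi]_n$ for some $\varpi\in\mathcal{O}_\omega$ with $\omega(\varpi)>0$; moreover, pushing the relation $[P(\pi)]_n=[0]_n$ through $f$, together with the description of $P$ having $\Phi_0$-image coefficients, gives that $\nu_{\mathcal{H}}(P^{\Phi_0}(\varpi))$ is large, i.e.\ $P^{\Phi_0}(\varpi)\in\mathfrak{m}_\omega^{\,n}\cdot(\text{something})$, so that $\omega(P^{\Phi_0}(\varpi))$ exceeds the Krasner bound $d(e)/e$ from Fact \ref{fact:value_of_different}(2) precisely because of the hypothesis $n>e^2M_\nu$ when $p\mid e$ (and automatically when $p\nmid e$, since then $M_\nu=0$ and the bound is trivially met for any $n\geq1$). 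Hence by Fact \ref{fact:value_of_different}(2) the polynomial $P^{\Phi_0}$ has a genuine root $c\in\mathcal{O}_\omega$ with $\omega(c-\varpi)$ large, and we set $\Phi(\pi):=c$, extending $\Phi_0$ to an embedding $\Phi:K\to L$. A routine check, using the same computation as in Lemma \ref{lem:lifting_homomorphism_from unramified fields} together with Fact \ref{fact:generated by p-basis and representatives}(1) (which expresses every $a\in\mathcal{O}_\nu$ in terms of $\Im(\lambda_n)$, $B$, and $\pi$), shows that $f([a]_n)=[\Phi(a)]_n$ for all $a\in K$; in particular every such $f$ is induced by an embedding, proving (1).

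For the uniqueness clause in (1) when $p\nmid e$: here $M_\nu=0$, so in Fact \ref{fact:value_of_different}(2) the extension $K/\hat F$ is tame and the strict Cohen subring is unique (Fact \ref{fact:existence of a strict Cohen subring}(2)); any embedding $\Psi:K\to L$ inducing $f$ must carry $R$ onto a strict Cohen subring of $\Psi(\mathcal{O}_\nu)$, must agree with $\varphi$ on residues, and must send $\pi$ to a root of $P^{\Psi}$ in the appropriate residue class modulo $\mathfrak{m}_\omega^{\,n}$; since $M_\nu=0$, Krasner's lemma forces that root to be unique, so $\Psi$ is determined. Thus $f$ determines its lift uniquely. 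For part (2): if $\mathcal{H}_{\nu,n}\cong\mathcal{H}_{\omega,n}$ over $p$ via $f$ with inverse $g$, apply (1) to both $f$ and $g$ to get embeddings $\Phi:K\to L$ and $\Psi:L\to K$ inducing $f$ and $g$ respectively; then $\Psi\circ\Phi:K\to K$ induces $g\circ f=\mathrm{id}$ on $\mathcal{H}_{\nu,n}$, and $\Phi\circ\Psi$ induces the identity on $\mathcal{H}_{\omega,n}$. To conclude $\Phi$ is an isomorphism I would argue that an embedding $K\to K$ inducing the identity on $\mathcal{H}_{\nu,n}$ must be surjective: it fixes the residue field and a $p$-basis, hence fixes a strict Cohen subring up to the identity, and fixes the uniformizer modulo $\mathfrak{m}_\nu^{\,n}$, so by the Krasner-type Fact \ref{fact:value_of_different}(2) together with Fact \ref{fact:generated by p-basis and representatives}(1) its image meets every residue class modulo arbitrarily high powers of $\mathfrak{m}_\nu$ and, by completeness, equals all of $K$; alternatively, when $p\nmid e$ this is immediate from the uniqueness in (1). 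The converse direction of (2) is trivial, since an isomorphism of valued fields over $p$ functorially induces an isomorphism of $n$th valued hyperfields over $p$.

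The step I expect to be the main obstacle is the extension of $\Phi_0$ across the ramified part: specifically, extracting from the single hyperfield relation $f([\pi]_n)=[\varpi]_n$ and $[P(\pi)]_n=[0]_n$ a sufficiently precise valuation-theoretic estimate $\omega\bigl(P^{\Phi_0}(\varpi)\bigr)>d(e)/e$ to invoke Krasner's lemma — this is exactly where the numerical hypothesis $n>e^2M_\nu$ is consumed, and one must be careful that the multivalued addition in $\mathcal{H}_{\omega,n}$ does not lose more precision than the $\mathfrak{m}_\omega^{\,n}$-worth that the computation in Fact \ref{fact:generated by p-basis and representatives} and Fact \ref{fact:addition on hyperfields}(5) allow. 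The bookkeeping of which representatives $\lambda_n$, $\eta_n$ are used, and verifying that $f$ genuinely restricts to a homomorphism over $p$ between the hyperfields of the completed Cohen fraction fields, is the other delicate point, but it is essentially the same argument as in Lemma \ref{lem:lifting_homomorphism_from unramified fields}.
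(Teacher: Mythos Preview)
Your overall strategy---reduce to the Cohen fraction field $F$ via Lemma \ref{lem:lifting_homomorphism_from unramified fields}, then extend across the totally ramified extension $K/F$ using the Eisenstein polynomial of a uniformizer---is the same as the paper's. The genuine gap is your treatment of the tame case. The claim that $M_\nu=0$ when $p\nmid e$ is false: for $P(X)=X^e-pa$ the conjugates of $\pi$ are $\zeta^i\pi$ with $\zeta$ a primitive $e$th root of unity, and since $p\nmid e$ one has $\nu(1-\zeta^i)=0$, so $M_\nu=\nu(\pi)=1/e>0$. Thus the Krasner bound of Fact \ref{fact:value_of_different}(2) is \emph{not} met for arbitrary $n\ge 1$, and your uniform Krasner argument (and the uniqueness argument built on it) collapses in the tame case. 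The paper instead splits cases: when $p\nmid e$ it takes $P(X)=X^e-pa$, picks $\pi_0$ with $f([\pi]_n)=[\pi_0]_n$, and applies Hensel's lemma to $Q(X)=X^e-p\Phi_0(a)/\pi_0^e$, using that $Q(1)\in\mathfrak m_\omega^n$ and $Q'(1)=e$ is a \emph{unit}; this yields a unique $b\equiv 1\pmod{\mathfrak m_\omega^n}$ with $(b\pi_0)^e=p\Phi_0(a)$, giving both existence and the uniqueness clause at once. Only in the wild case does the paper invoke Fact \ref{fact:value_of_different}(2), consuming the hypothesis $n>e^2M_\nu$.

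A second overclaim: in the wild case you assert a ``routine check'' gives $f([a]_n)=[\Phi(a)]_n$ for all $a$, but the root $\pi'$ produced by Krasner only satisfies $\omega(\pi'-\pi_0)>M_\nu$, which does \emph{not} force $[\pi']_n=[\pi_0]_n$; the paper accordingly claims only the existence of an embedding in that case, not that it induces $f$ (and the theorem's ``moreover'' clause is restricted to $p\nmid e$ for this reason). Two minor points: $F$ is already complete (strict Cohen rings are complete by definition), so the $\hat F$ is superfluous; and there is no reason $f$ restricted to $\mathcal H_{\nu,n}(F)$ should land in $\mathcal H_{\omega,n}(F')$ for your chosen $F'$---the paper simply applies Lemma \ref{lem:lifting_homomorphism_from unramified fields} with target $L$. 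For part (2), once (1) gives an embedding $\Phi:K\to L$ inducing an isomorphism of residue fields, the extension $L/\Phi(K)$ of complete discretely valued fields has trivial residue extension and ramification index $1$, hence $\Phi$ is already surjective; your longer surjectivity argument is unnecessary.
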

\begin{proof}
Let $f:\mathcal H_{\nu,l+1}\rightarrow \mathcal H_{\omega,l+1}$ and let $\varphi:K\nu\rightarrow L\omega$ be an embedding compatible with $f$. Let $R$ be a strict Cohen subring of $K$ and let $F$ be the quotient field of $R$, which is a complete unramified valued field. By Fact \ref{fact:lifting lemma}, there is an embedding $\Phi_0:F\rightarrow L$ compatible with $\varphi$ and by Lemma \ref{lem:lifting_homomorphism_from unramified fields}, for each $c\in F$, $f([c]_{l+1})=[\Phi_0(c)]_{l+1}$.\\

Suppose $K$ is tamely ramified, that is, $p\not|e$. Since $K$ is tamely ramified, there is a uniformizer $\pi$ of $K$ which is a zero of an Eisenstein polynomial $P(X)$ in $R[X]$ of the form $P(X)=X^e-pa$ for some $a\in R^{\times}$. Take $\pi_0\in L$ with $f([\pi]_{l+1})=[\pi_0]_{l+1}$ and so $[\pi_0]_{l+1}^e=[p\Phi(a)]_{l+1}$. Now consider a polynomial $Q(X)=X^e-(p\Phi(a))/(\pi_0)^e$ in $L[X]$. Then, since $Q(1)\in \mathfrak{m}_{\omega}^{l+1}$ and $Q'(1)=e\notin \mathfrak{m}_{\omega}$, by Hensel's lemma, there is a unique $b\in L^{\times}$ such that $b^e=(p\Phi(a))/(\pi_0)^e$ and $(b-1)\in \mathfrak{m}_{\omega}^{n}$. Let $\pi':=b\pi_0$ so that $(\pi')^e=p\Phi(a)$. Note that $[\pi']_{l+1}=[\pi_0]_{l+1}$ and $\pi'$ is such a unique zero of the polynomial $X^e-p\Phi(a)$. So, we have a unique isomorphism $\Phi:K\rightarrow L$ extending $\Phi_0\cup \{(\pi,\pi')\}$ and inducing $f$.\\

Suppose $K$ is wildly ramified, that is, $p|e$. We mimic the proof of \cite[Theorem 3.19]{Lee20} using Lemma \ref{lem:lifting_homomorphism_from unramified fields} instead of \cite[Lemma 3.6]{Lee20}. Since $\mathcal O_{\nu}$ is a totally ramified extension of $R$, there is a uniformizer $\pi$ of $K$ whose irreducible polynomial over $F$ is over $R$. Let $P(X)=X^e+a_{e-1}X^{e-1}+\ldots+a_0\in R[X]$ be the irreducible polynomial of $\pi$ over $F$ and let $\Phi_0(P)(X):=X^e+\Phi_0(a_{e-1})X^{e-1}+\ldots+\Phi_0(a_0)\in \Phi_0[R][X]$. Let $\pi_0\in \mathcal O_{\omega}$ such that $[\pi_0]=f([\pi])$. Then, we have that
\begin{align*}
0&=f([P(\pi)]_{l+1})\\
&=f([\pi^e+a_{e-1}\pi^{e-1}+\cdots+a_0]_{l+1})\\
&\in [\pi_0]_{l+1}^e+_{\mathcal H}f([a_{e-1}]_{l+1})[\pi_0]_{l+1}^{e-1}+_{\mathcal H}\cdots+_{\mathcal H}f([a_0]_{l+1})\\
&= [\pi_0]_{l+1}^e+_{\mathcal H}[\Phi_0(a_{e-1})]_{l+1}[\pi_0]_{l+1}^{e-1}+_{\mathcal H}\cdots+_{\mathcal H}[\Phi_0(a_0)]_{l+1}\\
&=[\pi_0^e]_{l+1}+_{\mathcal H}[\Phi_0(a_{e-1})\pi_0^{e-1}]_{l+1}+_{\mathcal H}\cdots+_{\mathcal H}[\Phi_0(a_0)]_{l+1}
\end{align*}
Since $\pi_0^e,\Phi_0(a_{e-1})\pi_0^{e-1},\ldots, \Phi_0(a_0)\in \mathcal O_{\omega}$, by Fact \ref{fact:addition on hyperfields}, we have $$0=\Phi_0(P)(\pi_0)+d$$ for some $d\in \mathfrak{m}_{\omega}^{n}$, and $\omega(\Phi_0(P)(\pi_0))\ge n/e> M(K_1)e$. By Fact \ref{fact:value_of_different}(2), there is $\pi'\in \mathcal O_{\omega}$ such that $\Phi_0(P)(\pi')=0$ and so we have an embedding $\Phi:K\rightarrow L$ extending $\Phi_0\cup \{(\pi,\pi')\}$.
\end{proof}

Using Theorem \ref{theorem:lifting lemma}, we will prove the embedding lemma \ref{lem:embedding over valued fields} over substructures, analogous to \cite[Lemma 4.10]{ADJ24}, via the valued hyperfield, and using this, we will provide elementary extensions and existential closedness in terms of valued hyperfields. The next lemma is analogous to \cite[Lemma 4.9]{ADJ24}.

\begin{lemma}\label{lem:unramified_embedding}
Let $(K_0,\nu_0)$ be an unramified $\mathbb Z$-valued field and $(K,\nu)$ a complete unramified extension of $(K_0,\nu_0)$ such that $K\nu/K_0\nu_0$ is separable. Let $(L,\omega)$ be a complete $\mathbb Z$-valued field.

Fix a positive integer $n$. Let $\Phi_0:(K_0,\nu_0)\rightarrow (L,\omega)$ be an embedding and let $\varphi_{\mathcal H}:\mathcal H_{\nu,n}\rightarrow \mathcal H_{\omega,n}$ be an embedding such that $\varphi_{\mathcal H}|_{\mathcal H_{\nu_0,n}}$ is induced from $\Phi_0$. Let $\beta$ be a $p$-basis of $K\nu$ over $K_0\nu_0$, $B$ be a lift of $\beta$ in $K$, and $B'$ a tuple of elements in $L$ such that $B$ and $B'$ are compatible with $\varphi_{\mathcal H}$.

Then, there is an embedding $\Phi:(K,\nu)\rightarrow (L,\omega)$ sending $B$ to $B'$, inducing $\varphi_{\mathcal H}$, and extending $\Phi_0$. 
\end{lemma}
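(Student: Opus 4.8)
The plan is to reduce the statement to the already-proven Theorem~\ref{theorem:lifting lemma} by replacing the coefficient ring $R$ of that argument with the strict Cohen subring attached to $K_0$, so that the new uniformizer-like data is absorbed into the $p$-basis lift $B$. First I would fix a strict Cohen subring $R_0$ of $\mathcal O_{\nu_0}$; since $K_0$ is unramified, $R_0=\mathcal O_{\nu_0}$, and since $K\nu/K_0\nu_0$ is separable, a $p$-basis $\beta$ of $K\nu$ over $K_0\nu_0$ together with a $p$-basis of $K_0\nu_0$ forms a $p$-basis of $K\nu$. By Fact~\ref{fact:existence of a strict Cohen subring}(1), a lift $B$ of $\beta$ in $\mathcal O_{\nu}$ (together with $R_0$) is contained in a strict Cohen subring $R$ of $\mathcal O_{\nu}$, and because $K$ is unramified $R=\mathcal O_{\nu}$. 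On the target side, $\Phi_0$ identifies $\mathcal O_{\nu_0}$ with a strict Cohen subring of its image, and $B'\subset \mathcal O_{\omega}$ is a lift of $\varphi(\beta)$ (where $\varphi:K\nu\to L\omega$ is the residue embedding induced by $\varphi_{\mathcal H}$, restricting to the one induced by $\Phi_0$ on $K_0\nu_0$), compatible with $\varphi_{\mathcal H}$.

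Next I would invoke Fact~\ref{fact:lifting lemma} with the unramified field $K$, the complete field $L$, the residue embedding $\varphi$, the tuple of $p$-independent elements $\beta$, and the lifts $B$ and $B'$: this produces an embedding of valued fields $\Phi:K\to L$ sending $B$ to $B'$ and compatible with $\varphi$. The only genuine work is to verify that this $\Phi$ (a) extends $\Phi_0$ and (b) induces $\varphi_{\mathcal H}$ on the $n$th valued hyperfields. For (a), observe that both $\Phi|_{K_0}$ and $\Phi_0$ are embeddings $K_0\to L$ of valued fields that agree on a $p$-basis of $K_0\nu_0$ and induce the same residue embedding; by the uniqueness part of Fact~\ref{fact:existence of a strict Cohen subring}(2) they agree on the relevant strict Cohen subring, hence on $F_0:=\operatorname{Frac}(\mathcal O_{\nu_0})=K_0$, so $\Phi|_{K_0}=\Phi_0$. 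For (b), apply Lemma~\ref{lem:lifting_homomorphism_from unramified fields} to the homomorphism $\varphi_{\mathcal H}$ over $p$ (note $[p]_n$ lies in the image of $\mathcal H_{\nu_0,n}$, so $\varphi_{\mathcal H}$ is over $p$) together with the embedding $\Phi$: since $K$ is unramified and $\Phi$ sends $B$ to $B'$ compatibly with $\varphi$, the lemma gives $\varphi_{\mathcal H}([a]_n)=[\Phi(a)]_n$ for every $a\in K$, i.e. $\Phi$ induces $\varphi_{\mathcal H}$.

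The main obstacle I anticipate is bookkeeping around the compatibility hypotheses: one must check that the $p$-basis $\beta$ of $K\nu$ over $K_0\nu_0$, when combined with a $p$-basis of $K_0\nu_0$, really is $p$-independent in $K\nu$ (this is exactly where separability of $K\nu/K_0\nu_0$ is used), and that ``$B$ and $B'$ compatible with $\varphi_{\mathcal H}$'' in the sense of Remark~\ref{rem:iso_hypevf_residue_iso} supplies precisely the hypothesis ``$B'$ a lift of $\varphi(\beta)$'' needed for Fact~\ref{fact:lifting lemma}, after enlarging by the images under $\Phi_0$ of a fixed lift of a $p$-basis of $K_0\nu_0$. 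Once these identifications are made, the construction of $\Phi$ is immediate from Fact~\ref{fact:lifting lemma} and the two verification steps are short applications of Fact~\ref{fact:existence of a strict Cohen subring}(2) and Lemma~\ref{lem:lifting_homomorphism_from unramified fields}. I would also remark that $\Phi$ is uniquely determined by the data $(B\mapsto B',\varphi,\Phi_0)$, again by Fact~\ref{fact:existence of a strict Cohen subring}(2), although uniqueness is not asserted in the statement.
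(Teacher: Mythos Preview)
Your overall strategy coincides with the paper's: enlarge $B$ by a lift $B_0\subset K_0$ of a $p$-basis of $K_0\nu_0$, set $B_0':=\Phi_0[B_0]$, produce $\Phi$ from the lifting theorem applied to $B_0\cup B$ and $B_0'\cup B'$, and then check that $\Phi$ induces $\varphi_{\mathcal H}$ and extends $\Phi_0$. The paper packages the construction and the ``induces $\varphi_{\mathcal H}$'' check into one appeal to the tamely ramified case of Theorem~\ref{theorem:lifting lemma}(1) (unramified means $e=1$, so $p\nmid e$); your route via Fact~\ref{fact:lifting lemma} followed by Lemma~\ref{lem:lifting_homomorphism_from unramified fields} is exactly the content of that case, so this difference is cosmetic.

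The gap is in step~(a). Fact~\ref{fact:existence of a strict Cohen subring}(2) asserts only that, inside a fixed complete $\mathbb Z$-valued ring, the strict Cohen subring containing a given $p$-basis lift is unique; it says nothing about uniqueness of \emph{embeddings}. Even granting that $\Phi[\mathcal O_{\nu_0}]$ and $\Phi_0[\mathcal O_{\nu_0}]$ lie in the same strict Cohen subring of $\mathcal O_{\omega}$, equality of images would not yield $\Phi|_{K_0}=\Phi_0$ as maps. (A related slip: $K_0$ is not assumed complete, so $\mathcal O_{\nu_0}$ need not be a strict Cohen ring, contrary to your first sentence.) What you actually need is that an embedding of an unramified $\mathbb Z$-valued field into a complete one is determined by its induced residue embedding together with its values on a lift of a $p$-basis; the paper does not prove this here either, but cites the proof of \cite[Lemma~4.9]{ADJ24} for precisely this point. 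One can also argue it directly from Fact~\ref{fact:generated by p-basis and representatives}(2): both $\Phi|_{K_0}$ and $\Phi_0$ agree on $B_0$ and on the representatives $\lambda_{l+1}$ (by compatibility with the same residue embedding), hence agree modulo $\mathfrak m_{\omega}^{l+1}$ for every $l$, hence agree on $\mathcal O_{\nu_0}$ and so on $K_0$. Finally, make the enlargement explicit in the construction itself: in your second paragraph you feed only $\beta,B,B'$ into Fact~\ref{fact:lifting lemma}, which does not force $\Phi(B_0)=B_0'$; you acknowledge this in your last paragraph, but it must be part of how $\Phi$ is built, not an afterthought.
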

\begin{proof}
Let $B_0$ be a tuple of representatives of a $p$-basis of $K_0\nu_0$ and let $B_0':=\Phi_0[B_0]$. Then, since $K\nu/K_0\nu_0$ is separable, $B_0\cup B$ is a tuple of reprsentatives of a $p$-basis of $K\nu$, and $B_0\cup B$ and $B_0'\cup B'$ are compatible with $\varphi_{\mathcal H}$. Since $(K,\nu)$ is unramified, by Theorem \ref{theorem:lifting lemma}(1), there is a unique embedding $\Phi:(K,\nu)\rightarrow (L,\omega)$ sending $B_0\cup B$ to $B_0'\cup B'$ and inducing $\varphi_{\mathcal H}$.

Also, since $\Phi$ and $\Phi_0$ are coincident on $B_0$, the proof of \cite[Lemma 4.9]{ADJ24} shows that $\Phi$ extends $\Phi_0$.
\end{proof}

\begin{lemma}\label{lem:embedding over valued fields}
Let $(K_0,\nu_0)$, $(K,\nu)$, and $(L,\omega)$ be complete finitely ramified $\mathbb Z$-valued fields of mixed characteristic $(0,p)$ having the same initial ramification index $e$ such that $K_0$ is a valued subfield of $K$ and $K\nu/K_0\nu_0$ is separable. Let $n$ be a positive integer satisfying that $$\begin{cases}
n>0 &\mbox{if } p\not| e,\\
n>e^2M_{\nu}&\mbox{if } p|e. 
\end{cases}$$

Let $\Phi_0:(K_0,\nu_0)\rightarrow (L,\omega)$ be an embedding and let $\varphi_{\mathcal H}^{0}:\mathcal H_{\nu_0,n}\rightarrow \mathcal H_{\omega,n}$ be an embedding induced from $\Phi_0$. Then, if there is an embedding $\varphi_{\mathcal H}:\mathcal H_{\nu,n}\rightarrow \mathcal H_{\omega,n}$ extending $\varphi_{\mathcal H}^{0}$, then there is an embedding $\Phi:(K,\nu)\rightarrow (L,\omega)$ extending $\Phi_0$.

Moreover, if $p$ does not divide $e$, then we can take such an embedding $\Phi$ inducing $\varphi_{\mathcal H}$.
\end{lemma}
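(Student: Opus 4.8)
\emph{Strategy.} I would follow the pattern of the proof of Theorem~\ref{theorem:lifting lemma}: split off the totally ramified part of $K/K_0$ over a strict Cohen subring, extend $\Phi_0$ on the unramified part by Lemma~\ref{lem:unramified_embedding}, and then reattach a uniformizer. So fix a strict Cohen subring $R_0$ of $\mathcal O_{\nu_0}$; since $K\nu/K_0\nu_0$ is separable, $R_0$ enlarges to a strict Cohen subring $R$ of $\mathcal O_\nu$ with $R_0\subseteq R$ (a standard fact on Cohen rings, cf.~\cite{AJ22}), and set $F_0:=\operatorname{Frac}(R_0)\subseteq F:=\operatorname{Frac}(R)$, complete unramified $\mathbb Z$-valued fields with residue fields $K_0\nu_0$ and $K\nu$. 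Since $K_0$ and $K$ have the same initial ramification index $e$ and value group $\mathbb Z$, a uniformizer $\pi$ of $K_0$ is also a uniformizer of $K$; by Fact~\ref{fact:value_of_different} its minimal polynomial $P(X)\in R_0[X]$ over $F_0$ is Eisenstein of degree $e$, and since $[K:F]=e=\deg P$ it is also the minimal polynomial of $\pi$ over $F$, so $K_0=F_0[X]/(P)$ and $K=F[X]/(P)$.

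\emph{The unramified step and the gluing.} Let $\varphi\colon K\nu\to L\omega$ be the residue embedding induced by $\varphi_{\mathcal H}$ (it extends the one induced by $\Phi_0$), pick a $p$-basis $\beta$ of $K\nu$ over $K_0\nu_0$, a lift $B\subseteq R$ of $\beta$, and a compatible $B'\subseteq L$. I would apply Lemma~\ref{lem:unramified_embedding} to the unramified extension $F_0\subseteq F$, with target $(L,\omega)$, the embedding $\Phi_0|_{F_0}$, the $p$-basis lifts $B,B'$, and the valued-hyperfield datum obtained from $\varphi_{\mathcal H}$ as follows: restrict $\varphi_{\mathcal H}$ to $\mathcal H_{\nu,n}(F)=\{[a]_n:a\in F\}$ — which is $\mathcal L_{vhf}$-isomorphic to the $\lceil n/e\rceil$-th valued hyperfield of $(F,\nu|_F)$ — and compose with the coarsening $\mathcal H_{\omega,n}\twoheadrightarrow\mathcal H_{\omega,\lceil n/e\rceil}$. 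This produces a valued field embedding $\Phi_1\colon F\to L$ extending $\Phi_0|_{F_0}$ (and inducing the hyperfield datum just described). Because $P\in R_0[X]$ and $\Phi_1|_{F_0}=\Phi_0|_{F_0}$, we get $\Phi_1(P)=\Phi_0(P)$ and $\Phi_0(P)(\Phi_0(\pi))=0$; hence $\Phi\colon K=F[X]/(P)\to L$ defined by $\Phi|_F=\Phi_1$, $\Phi(\pi)=\Phi_0(\pi)$, is a well-defined field embedding. It extends $\Phi_1$ and, agreeing with $\Phi_0$ on $F_0$ and on $\pi$, extends $\Phi_0$; and $\Phi^*\omega$ is a valuation on $K$ extending $\nu|_F$, so it equals $\nu$ ($K/F$ finite, $F$ complete). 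This gives the main assertion for every $e$.

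\emph{The ``moreover'' part.} Assume $p\nmid e$. By Theorem~\ref{theorem:lifting lemma}(1), $\varphi_{\mathcal H}$ is induced from a unique embedding $\Psi\colon K\to L$. Then $\Psi|_{K_0}$ and $\Phi_0$ are embeddings $K_0\to L$ both inducing $\varphi_{\mathcal H}^0$, hence equal by the uniqueness clause of Theorem~\ref{theorem:lifting lemma}(1); similarly $\Psi|_F$ and $\Phi_1$ are embeddings $F\to L$ both inducing the hyperfield datum on $F$ used above, hence equal. Since $K$ is generated by $F\cup\{\pi\}$ with $\pi\in K_0$, we get $\Psi=\Phi$, so the embedding $\Phi$ constructed above induces $\varphi_{\mathcal H}$. (Equivalently, in the tame case one may simply set $\Phi:=\Psi$ and verify it extends $\Phi_0$ by the same uniqueness argument.)

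\emph{Main obstacle.} The real work is the middle step: turning $\varphi_{\mathcal H}\colon\mathcal H_{\nu,n}\to\mathcal H_{\omega,n}$ on the finitely ramified fields into an honest $\mathcal L_{vhf}$-embedding between valued hyperfields of the unramified field $F$ at the shifted parameter $\lceil n/e\rceil$. This requires checking that $\mathcal H_{\nu,n}(F)$ really is $\mathcal L_{vhf}$-isomorphic to the $\lceil n/e\rceil$-th valued hyperfield of $F$ (a short but delicate case analysis with Fact~\ref{fact:addition on hyperfields}, since $\mathfrak m_\nu^n\cap F$ corresponds to a different power of the uniformizer $p$ of $\mathcal O_F$) and that composing with the coarsening $\mathcal H_{\omega,n}\twoheadrightarrow\mathcal H_{\omega,\lceil n/e\rceil}$ preserves injectivity on that image; the wild-case hypothesis $n>e^2M_\nu$, which forces $n>e$, is what keeps these parameters consistent. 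The reattachment of the uniformizer and the uniqueness bookkeeping for the ``moreover'' part are then routine.
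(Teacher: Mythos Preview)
Your proof is correct and follows essentially the same route as the paper: pass to strict Cohen subfields $F_0\subseteq F$, extend $\Phi_0|_{F_0}$ to an embedding $\Phi_1\colon F\to L$ via Lemma~\ref{lem:unramified_embedding}, and then glue using the uniformizer $\pi\in K_0$ (the paper phrases the gluing as linear disjointness of $K_0$ and $F$ over $F_0$, you as $K=F[X]/(P)$; these are equivalent). For the ``moreover'' clause your appeal to the uniqueness in Theorem~\ref{theorem:lifting lemma}(1) applied to $K_0$ and to $F$ is in fact exactly what the paper does as well.

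The one genuine divergence is your $\lceil n/e\rceil$ detour. The paper applies Lemma~\ref{lem:unramified_embedding} directly with the datum $\varphi_{\mathcal H}|_{\mathcal H_{\nu,n}(F)}$, without any reindexing or coarsening on the target. This is admittedly imprecise as written, but it works because the proof of Lemma~\ref{lem:unramified_embedding} (via Lemma~\ref{lem:lifting_homomorphism_from unramified fields} and Fact~\ref{fact:lifting lemma}) only extracts from the hyperfield map the induced residue embedding $\varphi$ and the compatible $p$-basis lifts $B,B'$; both are already determined by $\varphi_{\mathcal H}$ on $\mathcal H_{\nu,n}(F)$ with no need to renormalize the level. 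Your composite $\mathcal H_{\nu_F,\lceil n/e\rceil}\cong \mathcal H_{\nu,n}(F)\xrightarrow{\varphi_{\mathcal H}}\mathcal H_{\omega,n}\to\mathcal H_{\omega,\lceil n/e\rceil}$ is a legitimate homomorphism over $p$ (the inclusion $\mathfrak m_F^{\lceil n/e\rceil}\subseteq\mathfrak m_\nu^n$ makes the first arrow additive, and the projection on the right is always a hyperfield homomorphism), so your route also works; but the injectivity worry you raise is unnecessary, since Theorem~\ref{theorem:lifting lemma}(1) only requires a homomorphism, and the resulting $\Phi_1$ is automatically an embedding of fields. In short, the ``main obstacle'' you flag is self-imposed: dropping the coarsening and reading Lemma~\ref{lem:unramified_embedding} as a statement about the data $(\varphi,B,B')$ removes it entirely.
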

\begin{proof}
We mimic the proof of \cite[Lemma 4.10]{ADJ24}. Let $F_0\subseteq K_0$ be a quotient field of a strict Cohen subring of $\mathcal O_{\nu_0}$, and let $F$ be a valued extension of $F_0$ which is complete and unramified with the residue field $K\nu$. By Lemma \ref{lem:unramified_embedding}, we may assume that $F$ is a subfield of $K$.

Since $[K:F]=[K_0:F_0]=e$, $K=FK_0$, and $K_0$ and $F$ are linearly disjoint over $F_0$. By Lemma \ref{lem:unramified_embedding}, we extend the embedding $F_0\rightarrow \Phi_0[F_0](\subseteq L)$ to an embedding $F\rightarrow L$ inducing $\varphi_{\mathcal H}|_{\mathcal H_{\nu,n}(F)}$. Since $K_0$ and $F$ are linearly disjoint over $F_0$, we have an embedding $\Phi$ defined on $FK_0(=K)$ extending $\Phi_0$, which is the desired one.\\

Suppose $p$ does not divide $e$. We will show that $\Phi$ induces $\varphi_{\mathcal H}$. Note that there is a uniformizer $\pi$ of $K_0$ which is a zero of Eisenstein polynomial $P(X)$ in $F_0[X]$ of the form $P(X)=X^e-a$ such that $K=F(\pi)$. By Theorem \ref{theorem:lifting lemma}(1), $\varphi_{\mathcal H}$ is induced from a unique embedding $\Phi':(K,\nu)\rightarrow (L,\omega)$ such that
\begin{itemize}
	\item for each $c\in F$, $\varphi_{\mathcal H}([c]_n)=[\Phi'(c)]_n$; and
	\item for $\pi':=\Phi'(\pi)=\Phi_0(\pi)$, $\varphi_{\mathcal H}([\pi]_n)=[\pi']_n$,
\end{itemize}
satisfied by the embedding $\Phi$, and so $\Phi$ is the unique embedding inducing $\varphi_{\mathcal H}$.
\end{proof}

\section{Ax-Kochen-Eroshov principle via hyperfields}\label{sect:AKE via hyperfields}
We first recall several facts on the coarsening of valuation. Let $(K,\nu)$ be a valued field, let $\nu^{\circ}$ be the finest coarsening of $\nu$, and let $\bar \nu$ be the valuation on the residue field $K\nu^{\circ}$. So, for the nontrivial smallest convex subgroup $(\nu K)^{\circ}$ of $\nu K$, $$\nu^{\circ}K\cong \nu K/(\nu K)^{\circ},\ \bar \nu K\nu^{\circ}\cong (\nu K)^{\circ}.$$ 

\begin{fact}\label{fact:coarsening and hyperfield}\cite[Remark 5.4 and Lemma 5.5]{Lee20}
Fix a positive integer $n$. Let $$\mathcal H_{\nu,n}^{\circ}:=\{\alpha\in \mathcal H_{\nu,n}:\nu_{\mathcal H}(\alpha)\in (\nu K)^{\circ}\}\cup \{0(\in \mathcal H_{\nu,n})\}.$$ Then, $\mathcal H_{\nu,n}^{\circ}$ forms a valued hyperfield and it is isomorphic to $\mathcal H_{\bar \nu,n}$ via an isomorphism $f:\mathcal H_{\nu,n}^{\circ}\rightarrow \mathcal H_{\bar \nu,n}$ given as follows: For  each $\alpha\in \mathcal H_{\nu,n}^{\circ}$, $$f(\alpha)=\begin{cases}
[\res_{\nu^{\circ}}(a)]_n&\mbox{if }\alpha=[a]_n,\\
0&\mbox{if } \alpha=0.
\end{cases}
$$
\end{fact}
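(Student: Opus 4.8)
The strategy is to exhibit $f$ as the map induced on $\mathfrak{m}_\nu^n$-cosets by the residue homomorphism $\res_{\nu^\circ}\colon\mathcal O_{\nu^\circ}\to K\nu^\circ$ of the finest coarsening, and to reduce each clause of the statement to elementary bookkeeping among the valuation rings of $\nu$, $\nu^\circ$, and $\bar\nu$. Write $\Delta:=(\nu K)^\circ$, so that $\mathcal O_{\nu^\circ}^\times=\{x\in K:\nu(x)\in\Delta\}$, $\mathfrak{m}_{\nu^\circ}=\{x\in K:\nu(x)>\delta\text{ for all }\delta\in\Delta\}$, and $\mathcal H_{\nu,n}^\circ=\{[a]_n:a\in\mathcal O_{\nu^\circ}^\times\}\cup\{0\}$. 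The first step is to record the dictionary: $\mathfrak{m}_\nu\subseteq\mathcal O_{\nu^\circ}$; $\res_{\nu^\circ}(\mathcal O_\nu)=\mathcal O_{\bar\nu}$ and $\res_{\nu^\circ}(\mathfrak{m}_\nu)=\mathfrak{m}_{\bar\nu}$, hence $\res_{\nu^\circ}(\mathfrak{m}_\nu^n)=\mathfrak{m}_{\bar\nu}^n$ since $\res_{\nu^\circ}$ is a surjective ring homomorphism and $\mathfrak{m}_\nu^n$ is generated by $n$-fold products from $\mathfrak{m}_\nu$; $\bar\nu(\res_{\nu^\circ}(x))=\nu(x)$ for $x\in\mathcal O_{\nu^\circ}^\times$, under the identification $\bar\nu K\nu^\circ\cong\Delta$; and---the one point using that the coarsening is proper, i.e.\ $\Delta\neq 0$---the inclusion $\mathfrak{m}_{\nu^\circ}\subseteq\mathfrak{m}_\nu^n$ (pick $\delta\in\Delta$ with $\delta>0$; anything of $\nu$-value exceeding all of $\Delta$ has value $>n\delta$, hence lies in $\mathfrak{m}_\nu^n$).

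From this dictionary $\res_{\nu^\circ}$ carries the coset $a(1+\mathfrak{m}_\nu^n)$---which lies in $\mathcal O_{\nu^\circ}$ when $a\in\mathcal O_{\nu^\circ}^\times$---onto $\res_{\nu^\circ}(a)(1+\mathfrak{m}_{\bar\nu}^n)$. This yields at once that $f$ is well defined, preserves $0$, $1$, the multiplication, the zero element, and the valuation order (the last because $\nu(a)=\bar\nu(\res_{\nu^\circ}(a))$), and is surjective (every nonzero element of $K\nu^\circ$, after possibly inverting, lifts to $\mathcal O_{\nu^\circ}^\times$). Injectivity is where $\mathfrak{m}_{\nu^\circ}\subseteq\mathfrak{m}_\nu^n$ enters: if $\res_{\nu^\circ}(a'/a)\in 1+\mathfrak{m}_{\bar\nu}^n$, lift the difference along $\res_{\nu^\circ}(\mathfrak{m}_\nu^n)=\mathfrak{m}_{\bar\nu}^n$ and absorb the remaining $\mathfrak{m}_{\nu^\circ}$-error into $\mathfrak{m}_\nu^n$ to conclude $a'/a\in 1+\mathfrak{m}_\nu^n$, i.e.\ $[a]_n=[a']_n$.

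For the hyperfield structure I would equip $\mathcal H_{\nu,n}^\circ$ with the restricted multiplication and valuation and with the \emph{truncated} hyperaddition $\alpha\oplus\beta:=(\alpha+_{\mathcal H}\beta)\cap\mathcal H_{\nu,n}^\circ$; the truncation is genuinely needed, since $\mathcal H_{\nu,n}^\circ$ need not be closed under $+_{\mathcal H}$ (for instance, when $\nu K=\mathbb Z\times\mathbb Z$ is ordered lexicographically, $[t]_n$ with $\nu(t)>0$ and $\nu(t)\notin\Delta$ lies in $[1]_n+_{\mathcal H}[-1+t]_n$ but not in $\mathcal H_{\nu,n}^\circ$). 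Using Fact~\ref{fact:addition on hyperfields} together with $\res_{\nu^\circ}(a(1+\mathfrak{m}_\nu^n))=\res_{\nu^\circ}(a)(1+\mathfrak{m}_{\bar\nu}^n)$ and the additivity of $\res_{\nu^\circ}$ on $\mathcal O_{\nu^\circ}$, one checks that $0\in\alpha+_{\mathcal H}\beta$ exactly when the residues of the two representatives cancel; it follows that $\alpha\oplus\beta$ is always nonempty and that $f(\alpha\oplus\beta)=f(\alpha)+_{\mathcal H}f(\beta)$ for all $\alpha,\beta$. Since $f$ is then a bijection intertwining $0$, $1$, $\cdot$, $\nu_{\mathcal H}$, and $\oplus$ with the honest operations of the valued hyperfield $\mathcal H_{\bar\nu,n}$, transport of structure shows that $\mathcal H_{\nu,n}^\circ$ is a valued hyperfield and that $f$ is an isomorphism.

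I expect the main obstacle to be precisely this last clause: one must pin down exactly when cancellation occurs in $\mathcal H_{\nu,n}$ versus in $\mathcal H_{\bar\nu,n}$---equivalently, recognize that restricting to $\mathcal O_{\nu^\circ}$-units and reducing modulo $\mathfrak{m}_{\nu^\circ}$ discards precisely the ``large $\nu$-value'' tail produced by Fact~\ref{fact:addition on hyperfields}(2)--(3)---so that the truncated sum $\oplus$ really is an associative, ball-valued hyperaddition matching that of $\mathcal H_{\bar\nu,n}$. The comparison of the three valuation rings in the first paragraph is routine, if slightly fiddly, and the multiplicative and order-theoretic clauses are immediate once that comparison is in place.
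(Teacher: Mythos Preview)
The paper does not give its own proof of this statement: Fact~\ref{fact:coarsening and hyperfield} is simply quoted from \cite[Remark~5.4 and Lemma~5.5]{Lee20} with no accompanying argument, so there is nothing in the present paper to compare your proposal against line by line.

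That said, your outline is a correct and self-contained reconstruction of why the result holds. The ``dictionary'' you set up---in particular the identities $\res_{\nu^{\circ}}(\mathfrak{m}_{\nu}^{n})=\mathfrak{m}_{\bar\nu}^{n}$ and $\mathfrak{m}_{\nu^{\circ}}\subseteq\mathfrak{m}_{\nu}^{n}$, the latter using that $(\nu K)^{\circ}$ is nontrivial---is exactly what is needed, and your handling of well-definedness, bijectivity, and preservation of $\cdot$ and of the valuation order is straightforward from it. Your observation that $\mathcal H_{\nu,n}^{\circ}$ is \emph{not} in general closed under the ambient hyperaddition, so that one must work with the truncated operation $\alpha\oplus\beta=(\alpha+_{\mathcal H}\beta)\cap\mathcal H_{\nu,n}^{\circ}$, is a genuine point that a careless reader might miss; your lexicographic $\mathbb Z\times\mathbb Z$ example is apt. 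The verification that $f(\alpha\oplus\beta)=f(\alpha)+_{\mathcal H}f(\beta)$ goes through as you indicate: the forward inclusion is immediate from the ring homomorphism $\res_{\nu^{\circ}}$, and for the reverse one lifts elements of $\mathfrak{m}_{\bar\nu}^{n}$ to $\mathfrak{m}_{\nu}^{n}$ and absorbs the $\mathfrak{m}_{\nu^{\circ}}$-discrepancy back into $\mathfrak{m}_{\nu}^{n}$ (the case $0\in f(\alpha)+_{\mathcal H}f(\beta)$ reduces, via the same absorption, to $-b/a\in 1+\mathfrak{m}_{\nu}^{n}$ and hence $0\in\alpha+_{\mathcal H}\beta$). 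Transport of structure then gives the hyperfield axioms on $\mathcal H_{\nu,n}^{\circ}$ for free, as you say.
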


\begin{remark}\label{rem:coarsening and hyperfield}
Let $(K,\nu)$ and $(L,\omega)$ be finitely ramified valued fields of mixed characteristic $(0,p)$. Given a positive integer $n$, a homomorphism over $p$ from $\mathcal H_{\nu,n}$ to $\mathcal H_{\omega,n}$ induces a homomorphism over $p$ from $\mathcal H_{\nu^{\circ},n}$ to $\mathcal H_{\omega^{\circ},n}$.
\end{remark}
\begin{proof}
Let $f:\mathcal H_{\nu,n}\rightarrow \mathcal H_{\omega,n}$ be a homomorphism over $p$. Since $(K,\nu)$ and $(L,\omega)$ are finitely ramified and $f$ is over $p$, given $\alpha\in \mathcal H_{\nu,n}$, $$\nu_{\mathcal H}(\alpha)\in (\nu K)^{\circ}\Leftrightarrow \omega_{\mathcal H}(f(\alpha))\in (\omega L)^{\circ},$$ which implies that $f[\mathcal H_{\nu,n}^{\circ}]\subseteq \mathcal H_{\omega,n}^{\circ}$. So, by Fact \ref{fact:coarsening and hyperfield}, $f$ induces a homomorphism $\bar f:\mathcal H_{\bar \nu,n}\rightarrow \mathcal H_{\bar \omega, n}$. 
\end{proof}

\subsection{Relative completness}
\begin{theorem}\label{theorem:relative completeness}
Let $(K,\nu)$ and $(L,\omega)$ be finitely ramified henselian valued fields of mixed characteristic $(0,p)$ having the same initial ramification index $e$. Let $n$ be a positive integer satisfying that $$\begin{cases}
n>0 &\mbox{if } p\not| e,\\
n>e^2M_{\nu}&\mbox{if } p|e. 
\end{cases}$$
Then, $$(K,\nu)\equiv (L,\omega)\Leftrightarrow (\mathcal H_{\nu,n},[p]_n)\equiv(\mathcal H_{\omega,n},[p]_n).$$
\end{theorem}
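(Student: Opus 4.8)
The plan is to dispatch $(\Rightarrow)$ by a routine translation and $(\Leftarrow)$ by reducing, via ultrapowers and coarsening, to the case of complete $\mathbb Z$-valued fields, where Theorem~\ref{theorem:lifting lemma}(2) applies, and then feeding the outcome into the Ax--Kochen--Ershov principle of Fact~\ref{fact:main_ADJ24}. For $(\Rightarrow)$: since $(\mathcal H_{\nu,n},[p]_n)$ is interpretable, without parameters and uniformly, in $(K,\nu)$ --- the element $[p]_n$ being named by the field element $p$ --- Remark~\ref{rem:existentially definability of the valued hyperfield} provides, for each $\mathcal L_{vhf}$-sentence $\varphi$ (in the constant for $[p]_n$), an $\mathcal L_{val}$-sentence $\tilde\varphi$ with $(\mathcal H_{\nu,n},[p]_n)\models\varphi$ iff $(K,\nu)\models\tilde\varphi$, uniformly over finitely ramified valued fields of mixed characteristic $(0,p)$ and initial ramification index $e$; hence $(K,\nu)\equiv(L,\omega)$ transfers to $(\mathcal H_{\nu,n},[p]_n)\equiv(\mathcal H_{\omega,n},[p]_n)$.

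For $(\Leftarrow)$, assume $(\mathcal H_{\nu,n},[p]_n)\equiv(\mathcal H_{\omega,n},[p]_n)$. First I would pass to ultrapowers: by Keisler--Shelah there is an ultrafilter $\mathcal U$ with $(\mathcal H_{\nu,n},[p]_n)^{\mathcal U}\cong(\mathcal H_{\omega,n},[p]_n)^{\mathcal U}$, and since the $n$th valued hyperfield is interpretable in the valued field and interpretations commute with ultrapowers, putting $(K^*,\nu^*):=(K,\nu)^{\mathcal U}$ and $(L^*,\omega^*):=(L,\omega)^{\mathcal U}$ this becomes $\mathcal H_{\nu^*,n}\cong\mathcal H_{\omega^*,n}$ via an isomorphism $\varphi_{\mathcal H}$ over $p$; being an $\mathcal L_{vhf}$-isomorphism over $p$, $\varphi_{\mathcal H}$ is in particular an isomorphism of valued hyperfields over $p$ in the sense of Definition~\ref{def:morphism_vhf}, and here $(K^*,\nu^*)\equiv(K,\nu)$ and $(L^*,\omega^*)\equiv(L,\omega)$ are henselian finitely ramified of index $e$. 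Next I would coarsen: with $\nu^{*\circ},\omega^{*\circ}$ the finest proper coarsenings, finite ramification forces the smallest nontrivial convex subgroup of $\nu^*K^*$ to be isomorphic to $\mathbb Z$, so the induced valued residue fields $(K^*\nu^{*\circ},\bar\nu^*)$ and $(L^*\omega^{*\circ},\bar\omega^*)$ are henselian finitely ramified $\mathbb Z$-valued fields of mixed characteristic $(0,p)$ and index $e$, with residue fields $K^*\nu^*$ and $L^*\omega^*$; replacing these by their completions --- which leaves the $n$th valued hyperfield and the invariant $M$ unchanged, the natural map $\mathcal H_{\bar\nu^*,n}\to\mathcal H_{\widehat{\bar\nu^*},n}$ being an isomorphism over $p$ --- I get complete $\mathbb Z$-valued fields $(K_1,\nu_1)$ and $(L_1,\omega_1)$ of index $e$ for which $n$ still satisfies the stated bound. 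Since $\varphi_{\mathcal H}$ is over $p$ and the fields are finitely ramified, it carries $\mathcal H_{\nu^*,n}^{\circ}$ onto $\mathcal H_{\omega^*,n}^{\circ}$ (as in Remark~\ref{rem:coarsening and hyperfield}); composing with the isomorphisms of Fact~\ref{fact:coarsening and hyperfield} yields an isomorphism over $p$ between $\mathcal H_{\nu_1,n}$ and $\mathcal H_{\omega_1,n}$, and then Theorem~\ref{theorem:lifting lemma}(2) produces a valued-field isomorphism $(K_1,\nu_1)\cong(L_1,\omega_1)$.

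It remains to assemble the Ax--Kochen--Ershov data for $(K^*,\nu^*)$ and $(L^*,\omega^*)$. Since a henselian finitely ramified $\mathbb Z$-valued field has the same residue field as its completion --- as an $\mathcal L_{p,e}$-structure, that structure being existentially $\mathcal L_{val}$-definable --- and the same value group, Fact~\ref{fact:main_ADJ24} shows it is elementarily equivalent to its completion; combined with $(K_1,\nu_1)\cong(L_1,\omega_1)$ this gives $(K^*\nu^{*\circ},\bar\nu^*)\equiv(L^*\omega^{*\circ},\bar\omega^*)$, whence $K^*\nu^*\equiv L^*\omega^*$ in $\mathcal L_{p,e}$. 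Moreover $\varphi_{\mathcal H}$ induces an isomorphism $\nu^*K^*\cong\omega^*L^*$ of value groups, so $\nu K\equiv\omega L$ in $\mathcal L_{oag}$. Applying Fact~\ref{fact:main_ADJ24} to $(K^*,\nu^*)$ and $(L^*,\omega^*)$ then gives $(K^*,\nu^*)\equiv(L^*,\omega^*)$, and therefore $(K,\nu)\equiv(L,\omega)$.

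The hard part will be the bookkeeping in this reduction to complete $\mathbb Z$-valued fields: one must check that through the successive passages to ultrapowers, to finest coarsenings, and to completions, the isomorphism of hyperfields stays over $p$, the initial ramification index stays $e$, and the numerical bound on $n$ (equivalently, the invariant $M$) is unaffected. Once that is secured, Theorem~\ref{theorem:lifting lemma}(2) together with Fact~\ref{fact:main_ADJ24} closes the argument --- essentially the proof of the perfect-residue-field case (Fact~\ref{fact:main_Lee20}) with the refined lifting lemma in place of its predecessor.
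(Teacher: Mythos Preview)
Your proof is correct and follows essentially the same route as the paper: reduce via Keisler--Shelah and the finest coarsening to complete $\mathbb Z$-valued fields, invoke Theorem~\ref{theorem:lifting lemma} there, and then climb back up by an AKE principle. The only cosmetic differences are that the paper arranges $\aleph_1$-saturation so that $(K\nu^{\circ},\bar\nu)$ is already complete (avoiding your explicit completion step), and for the final ascent it cites the ``standard argument'' of \cite[Theorem 5.8]{Lee20} (equal-characteristic-$0$ AKE for the coarse valuation) rather than appealing to Fact~\ref{fact:main_ADJ24} twice as you do.
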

\begin{proof}
It is enough to show the right-to-left direction. We follow the proof of $(3)\Rightarrow (1)$ in \cite[Theorem 5.8]{Lee20}. Suppose $(\mathcal H_{\nu,n},[p]_n)\equiv(\mathcal H_{\omega,n},[p]_n)$. By the Keisler-Shelah theorem, after taking the ultraproduct properly, we may assume that $(K,\nu)$ and $(L,\omega)$ are $\aleph_1$-saturated and $(\mathcal H_{\nu,n},[p]_n)\cong(\mathcal H_{\omega,n},[p]_n)$.

Since $(K,\nu)$ and $(L,\omega)$ are $\aleph_1$-saturated, $(K\nu^{\circ},\bar \nu)$ and $(L\omega^{\circ},\bar \omega)$ are complete $\mathbb Z$-valued fields of the same initial ramification index $e$. Since $(\mathcal H_{\nu,n},[p]_n)\cong (\mathcal H_{\omega,n},[p]_n)$, by Remark \ref{rem:coarsening and hyperfield}, we have $(\mathcal H_{\nu^{\circ},n},[p]_n)\cong(\mathcal H_{\omega^{\circ},n},[p]_n)$. By Theorem \ref{theorem:lifting lemma}, we have $(K\nu^{\circ},\bar \nu)\cong (L\omega^{\circ},\bar \omega)$ and so $(K\nu^{\circ},\bar \nu)\equiv (L\omega^{\circ},\bar \omega)$. Therefore, by a standard argument (c.f. \cite[Theorem 5.8]{Lee20}), we have $(K,\nu)\equiv (L,\omega)$.
\end{proof}

\begin{corollary}\label{cor:transfer of decidability of full theories}
Let $(K,\nu)$ be finitely ramified henselian valued fields of mixed characteristic $(0,p)$ having the initial ramification index $e$. Let $n$ be a positive integer satisfying that $$\begin{cases}
n>0 &\mbox{if } p\not| e,\\
n>e^2M_{\nu}&\mbox{if } p|e. 
\end{cases}$$
Then, $\Th_{\mathcal L_{val}}(K,\nu)$ is decidable if and only if $\Th_{\mathcal L_{vhf}}(\mathcal H_{\nu,n},[p]_n)$ is decidable.
\end{corollary}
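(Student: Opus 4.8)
The plan is to derive the transfer of decidability directly from the AKE principle for elementary equivalence (Theorem \ref{theorem:relative completeness}) together with the uniform interpretability of the $n$th valued hyperfield inside the valued field (Remark \ref{rem:existentially definability of the valued hyperfield}). The key observation is that Theorem \ref{theorem:relative completeness} establishes a bijection between complete $\mathcal L_{val}$-theories of finitely ramified henselian valued fields of residue characteristic $p$ and initial ramification index $e$, and complete $\mathcal L_{vhf}$-theories of their $n$th valued hyperfields (with the constant $[p]_n$); this correspondence is moreover \emph{recursive} in both directions, and that is what one needs to exploit.

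First I would spell out the two directions of the reduction. For the forward direction, suppose $\Th_{\mathcal L_{val}}(K,\nu)$ is decidable. Given an $\mathcal L_{vhf}$-sentence $\varphi$, apply Remark \ref{rem:existentially definability of the valued hyperfield} to produce, effectively from $\varphi$, $p$, $e$, and $n$, an $\mathcal L_{val}$-sentence $\tilde\varphi$ such that $\mathcal H_{\nu,n}\models\varphi$ iff $(K,\nu)\models\tilde\varphi$ — here one must also add to $\tilde\varphi$ the (fixed, first-order) $\mathcal L_{val}$-sentences asserting henselianity, mixed characteristic $(0,p)$, and initial ramification index $e$, but since $(K,\nu)$ already satisfies these, this is harmless. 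Then $\varphi\in\Th_{\mathcal L_{vhf}}(\mathcal H_{\nu,n},[p]_n)$ iff $\tilde\varphi\in\Th_{\mathcal L_{val}}(K,\nu)$, and the latter is decidable by hypothesis, so $\Th_{\mathcal L_{vhf}}(\mathcal H_{\nu,n},[p]_n)$ is decidable. For the converse, suppose $\Th_{\mathcal L_{vhf}}(\mathcal H_{\nu,n},[p]_n)$ is decidable, and let $\psi$ be an $\mathcal L_{val}$-sentence. One wants to decide whether $(K,\nu)\models\psi$. By Theorem \ref{theorem:relative completeness}, for \emph{every} finitely ramified henselian valued field $(L,\omega)$ of the same $p$ and $e$, one has $(L,\omega)\models\psi$ iff $(\mathcal H_{\omega,n},[p]_n)\equiv(\mathcal H_{\nu,n},[p]_n)$ forces it — more precisely, the complete $\mathcal L_{val}$-theory of $(K,\nu)$ restricted to such fields is determined by $\Th_{\mathcal L_{vhf}}(\mathcal H_{\nu,n},[p]_n)$. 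The clean way to package this: let $T_0$ be the (recursively axiomatized) $\mathcal L_{val}$-theory of finitely ramified henselian valued fields of residue characteristic $p$ and initial ramification index $e$; then Theorem \ref{theorem:relative completeness} says $T_0\cup\{\tilde\varphi:\varphi\in\Th_{\mathcal L_{vhf}}(\mathcal H_{\nu,n},[p]_n)\}$ is a complete $\mathcal L_{val}$-theory, and it equals $\Th_{\mathcal L_{val}}(K,\nu)$. Since $T_0$ is recursive and the map $\varphi\mapsto\tilde\varphi$ is recursive and $\Th_{\mathcal L_{vhf}}(\mathcal H_{\nu,n},[p]_n)$ is decidable, this axiom set is recursively enumerable; a complete, recursively axiomatizable theory is decidable, hence $\Th_{\mathcal L_{val}}(K,\nu)$ is decidable.

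The main point requiring care — the step I expect to be the only real obstacle — is the \emph{effectiveness} and \emph{uniformity} of Remark \ref{rem:existentially definability of the valued hyperfield}: one must check that the translation $\varphi\mapsto\tilde\varphi$ is genuinely algorithmic (it is, since the defining formulas for $|$, $\cdot_{\mathcal H}$, and $+_{\mathcal H}$ given in the proof of that remark depend only on $p$, $e$, $n$ in an explicit way, and the inductive clause over the structure of $\varphi$ is a straightforward syntactic recursion), and that $M_\nu$ — which enters the hypothesis on $n$ — does not need to be computed, since $n$ is simply given in the statement as a fixed integer satisfying the stated inequality. I would also note that the completeness conclusion of Theorem \ref{theorem:relative completeness} is exactly what lets one avoid any appeal to decidability of $T_0$-provability beyond recursive enumerability: completeness plus r.e.\ axiomatization yields decidability by the standard argument (enumerate proofs from the axioms; for any sentence $\psi$, exactly one of $\psi$, $\neg\psi$ will eventually appear). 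No new model-theoretic input beyond what is already in the excerpt is needed; the corollary is a formal consequence of Theorem \ref{theorem:relative completeness} and Remark \ref{rem:existentially definability of the valued hyperfield}.
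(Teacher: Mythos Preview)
Your proposal is correct and follows essentially the same approach as the paper: both use Remark~\ref{rem:existentially definability of the valued hyperfield} for the effective translation $\varphi\mapsto\tilde\varphi$ to handle the forward direction, and both use Theorem~\ref{theorem:relative completeness} to see that $T_{p,e}\cup\{\tilde\varphi:\varphi\in\Th_{\mathcal L_{vhf}}(\mathcal H_{\nu,n},[p]_n)\}$ axiomatizes the complete theory $\Th_{\mathcal L_{val}}(K,\nu)$, whence recursive axiomatizability plus completeness gives decidability for the converse. Your $T_0$ is exactly the paper's $T_{p,e}$, and the argument is otherwise identical.
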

\begin{proof}
It is standard. Let $T_{p,e}$ be the $\mathcal L_{val}$-theory of all finitely ramified henselian valued fields of mixed characteristic $(0,p)$ having the initial ramification index $e$, which is computably axiomatizable. Let $T_{\mathcal H_{\nu,n}}$ be the $\mathcal L_{vhf}$-theory of $\mathcal H_{\nu,n}$ and let $\tilde{T}_{\mathcal H_{\nu,n}}:=\{\tilde{\theta}:\theta\in T_{vhf}\}$ so that given a finitely ramified valued field $(L,\omega)$ of mixed characteristic $(0,p)$ having the initial ramification index $e$, $$\mathcal H_{\omega,n}\models T_{vhf}\Leftrightarrow (L,\omega)\models \tilde{T}_{vhf}$$ by Remark \ref{rem:existentially definability of the valued hyperfield}. Consider the $\mathcal L_{val}$-theory $T:=T_{p,e}\cup \tilde{T}_{\mathcal H_{\nu,n}}$, which is the complete $\mathcal L_{val}$-theory of $(K,\nu)$ by Theorem \ref{theorem:relative completeness}. Since $T$ is complete and $T_{p,e}$ is computably axiomatizable, we have that $T$ is decidable if and only if $\tilde{T}_{\mathcal H_{\nu,n}}$ is decidable if and only if $T_{\mathcal H_{\nu,n}}$ is decidable.
\end{proof}

Since the residue field and the value group are interpretable in the $n$th valued hyperfield in $\mathcal L_{vhf}$ for any positive integer $n$, by \cite[Theorem 5.4]{ADJ24}, we have the AKE principle for relative model completeness relative to higher valued hyperfields.
\begin{theorem}\label{thm:model completeness}
Let $(K,\nu)\subseteq (L,\omega)$ be two finitely ramified henselian valued fields of the same initial ramification index $e$. If $\mathcal H_{\nu,n}\preceq \mathcal H_{\omega,n}$ in $\mathcal L_{vhf}$ for some positive integer $n$, $(K,\nu)\preceq (L,\omega)$ in $\mathcal L_{val}$.
\end{theorem}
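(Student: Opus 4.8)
The plan is to reduce the statement to the relative model completeness result of Anscombe--Dittmann--Jahnke (\cite[Theorem 5.4]{ADJ24}), which asserts that for two finitely ramified henselian valued fields $(K,\nu)\subseteq(L,\omega)$ of the same initial ramification index $e$, one has $(K,\nu)\preceq(L,\omega)$ in $\mathcal L_{val}$ provided $K\nu\preceq L\omega$ in $\mathcal L_{p,e}$ and $\nu K\preceq \omega L$ in $\mathcal L_{oag}$. So the task is to extract these two elementary-substructure statements from the hypothesis $\mathcal H_{\nu,n}\preceq\mathcal H_{\omega,n}$ in $\mathcal L_{vhf}$. First I would recall, as already noted in the paragraph preceding the theorem, that both the residue field (in $\mathcal L_{p,e}$, since the extra relation symbols of $\mathcal L_{p,e}$ are existentially $\mathcal L_{val}$-definable and hence, via Remark \ref{rem:existentially definability of the valued hyperfield} read backwards, $\mathcal L_{vhf}$-definable uniformly in the $n$th valued hyperfield) and the value group (in $\mathcal L_{oag}$) are interpretable in $\mathcal H_{\nu,n}$ by $\mathcal L_{vhf}$-formulas that do not depend on the particular field; one interpretation of the residue field comes from $\mathcal H_{\nu,1}(S_\nu)\cong K\nu$ of Remark \ref{rem:description of H_n(S)}, the $S_\nu$ being a definable subset via the divisibility relation $|$, and the value group arises as the quotient $\mathcal H_{\nu,n}/\mathcal H_{\nu,n}(S_\nu)$ ordered by $|$.

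The key step is then the standard transfer of elementarity through a uniform interpretation: if $\mathcal M\preceq\mathcal N$ and a structure $\mathcal A$ is interpreted in $\mathcal M$, $\mathcal B$ in $\mathcal N$, by the same interpreting formulas, and $\mathcal A\subseteq\mathcal B$ is realized compatibly inside $\mathcal M\subseteq\mathcal N$, then $\mathcal A\preceq\mathcal B$. Applying this to the residue-field interpretation gives $K\nu\preceq L\omega$ in $\mathcal L_{p,e}$, and applying it to the value-group interpretation gives $\nu K\preceq\omega L$ in $\mathcal L_{oag}$. One should check that the inclusion $(K,\nu)\subseteq(L,\omega)$ indeed induces $\mathcal H_{\nu,n}\subseteq\mathcal H_{\omega,n}$ as $\mathcal L_{vhf}$-structures, i.e. that $[a]_n\mapsto[a]_n$ is well defined and an $\mathcal L_{vhf}$-embedding (well-definedness uses $\mathfrak m_\nu^n=\mathfrak m_\omega^n\cap\mathcal O_\nu$ up to the relevant uniformizer, which holds since the initial ramification indices agree), and that under this embedding the interpreted copies of residue field and value group inside $\mathcal H_{\nu,n}$ land on those inside $\mathcal H_{\omega,n}$ — this is immediate since the interpreting formulas are absolute. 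Once $K\nu\preceq L\omega$ and $\nu K\preceq\omega L$ are in hand, \cite[Theorem 5.4]{ADJ24} yields $(K,\nu)\preceq(L,\omega)$ directly.

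The main obstacle, modest as it is, is bookkeeping around languages: one must make sure the interpretation of the residue field produces an $\mathcal L_{p,e}$-structure and not merely an $\mathcal L_{ring}$-structure, i.e. that each of the finitely many extra relations of $\mathcal L_{p,e}$ is captured by an $\mathcal L_{vhf}$-formula in the interpretation. This follows because these relations are existentially $\mathcal L_{val}$-definable (by construction of $\mathcal L_{p,e}$, as recalled in Fact \ref{fact:main_ADJ24}) and such existential conditions on residue field elements translate into $\mathcal L_{vhf}$-conditions on representatives in $\mathcal H_{\nu,n}(S_\nu)$ via the addition and multiplication data of the hyperfield, again using Fact \ref{fact:addition on hyperfields} and Remark \ref{rem:description of H_n(S)}. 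Everything else is the routine functoriality of interpretations with respect to elementary substructures, so the argument is short.
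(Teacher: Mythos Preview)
Your proposal is correct and follows exactly the paper's approach: the paper's entire argument is the single sentence preceding the theorem, namely that the residue field and value group are interpretable in $\mathcal H_{\nu,n}$ in $\mathcal L_{vhf}$, whence $\mathcal H_{\nu,n}\preceq\mathcal H_{\omega,n}$ transfers to $K\nu\preceq L\omega$ and $\nu K\preceq\omega L$, and then \cite[Theorem~5.4]{ADJ24} is invoked. You have simply spelled out the bookkeeping (including the $\mathcal L_{p,e}$ issue) that the paper leaves implicit.
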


\subsection{Relative existential completeness}\label{subsect:relative_existential_completness}
Given a valued field $(K,\nu)$, we write $\Th^{\exists}_{\mathcal L_{val}}(K,\nu)$ for the existential theory and $\Th^{\exists^+}_{\mathcal L_{vhf}}(\mathcal H_{\nu,n}(K),[p]_n)$ for the positive existential theory over $[p]_n$. We show the relative existential completeness relative to higher valued hyperfields, analogous to \cite[Corollary 5.6]{ADJ24}.
\begin{theorem}\label{theorem:relative existential completeness}
Let $(K,\nu)$ and $(L,\omega)$ be finitely ramified henselian valued fields of mixed characteristic $(0,p)$ having the same initial ramification index $e$. Let $n$ be a positive integer satisfying that $$\begin{cases}
n>0 &\mbox{if } p\not| e,\\
n>e^2M_{\nu}&\mbox{if } p|e. 
\end{cases}$$
Then, $$\Th^{\exists}_{\mathcal L_{val}}(K,\nu)\subseteq \Th^{\exists}_{\mathcal L_{val}}(L,\omega)\Leftrightarrow \Th^{\exists^+}_{\mathcal L_{vhf}}(\mathcal H_{\nu,n},[p]_n)\subseteq \Th^{\exists^+}_{\mathcal L_{vhf}}(\mathcal H_{\omega,n},[p]_n).$$
\end{theorem}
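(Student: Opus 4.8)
\emph{Overview.} The equivalence has an easy direction and a substantial one. The direction $(\Rightarrow)$ is a direct translation via Remark~\ref{rem:existentially definability of the valued hyperfield}, and $(\Leftarrow)$ proceeds by producing an $\mathcal L_{vhf}$-homomorphism over $p$ into a saturated elementary extension, passing to the finest coarsenings, lifting via Theorem~\ref{theorem:lifting lemma}(1), and finally reassembling everything into an $\mathcal L_{val}$-embedding by the equicharacteristic-$0$ Ax--Kochen--Ershov machinery.

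\emph{The direction $(\Rightarrow)$.} If $\theta$ is a positive existential $\mathcal L_{vhf}$-formula, then by Remark~\ref{rem:existentially definability of the valued hyperfield} the statement ``$\theta([p]_n)$'' translates (substituting $p$ for the distinguished hyperfield element and replacing the hyperfield relations by their existential $\mathcal L_{val}$-definitions) to an existential $\mathcal L_{val}$-sentence $\tilde\theta$ with $\mathcal H_{\mu,n}\models\theta([p]_n)\Leftrightarrow (M,\mu)\models\tilde\theta$ for every finitely ramified valued field $(M,\mu)$ of mixed characteristic $(0,p)$ and initial ramification index $e$. Applying this to $(K,\nu)$ and $(L,\omega)$ and using $\Th^{\exists}_{\mathcal L_{val}}(K,\nu)\subseteq\Th^{\exists}_{\mathcal L_{val}}(L,\omega)$ gives the inclusion of positive existential $\mathcal L_{vhf}$-theories over $[p]_n$.

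\emph{The direction $(\Leftarrow)$.} Recall that $\Th^{\exists}_{\mathcal L_{val}}(K,\nu)\subseteq\Th^{\exists}_{\mathcal L_{val}}(L,\omega)$ holds once $(K,\nu)$ embeds into an elementary extension of $(L,\omega)$. Passing to elementary extensions changes neither the existential $\mathcal L_{val}$-theory nor the elementary-equivalence class of the pair (higher valued hyperfield, $[p]_n$), the latter being uniformly interpreted in $\mathcal L_{val}$ by Remark~\ref{rem:existentially definability of the valued hyperfield}; so we may assume $(K,\nu)$ is $\aleph_1$-saturated and replace $(L,\omega)$ by a sufficiently saturated elementary extension, the hypothesis persisting. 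Introduce a variable $x_a$ for each $a\in\mathcal H_{\nu,n}$ and consider the partial type over $\{[p]_n\}$ consisting of all positive quantifier-free $\mathcal L_{vhf}$-formulas in the $x_a$'s and the parameter $[p]_n$ that hold in $\mathcal H_{\nu,n}$ under $x_a\mapsto a$ (including $x_{[p]_n}=[p]_n$). Each finite conjunction, existentially quantified, lies in $\Th^{\exists^+}_{\mathcal L_{vhf}}(\mathcal H_{\nu,n},[p]_n)$, hence holds in $\mathcal H_{\omega,n}$; so the type is finitely satisfiable, and it is realized since $\mathcal H_{\omega,n}$ is interpretable in the saturated $(L,\omega)$. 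The realization defines a homomorphism of $\mathcal L_{vhf}$-structures $f\colon\mathcal H_{\nu,n}\to\mathcal H_{\omega,n}$ with $f([p]_n)=[p]_n$; as $(K,\nu)$ is finitely ramified, $\nu_{\mathcal H}([p]_n)=\nu(p)$ is positive and $(0,\nu(p)]$ is finite, so by (the argument of) Remark~\ref{rem:L_vhf homomorphism and vhf homomorphism} the map $f$ is a homomorphism of valued hyperfields over $p$; in particular it induces an embedding of ordered abelian groups $\nu K\hookrightarrow\omega L$ sending $\nu(p)$ to $\omega(p)$. Now let $\nu^{\circ}$ and $\omega^{\circ}$ be the finest coarsenings. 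By $\aleph_1$-saturation, $(K\nu^{\circ},\bar\nu)$ and $(L\omega^{\circ},\bar\omega)$ are complete $\mathbb Z$-valued fields of mixed characteristic $(0,p)$, the first of initial ramification index $e$, and the bound on $n$ is unchanged (as in the proof of Theorem~\ref{theorem:relative completeness}). By Remark~\ref{rem:coarsening and hyperfield} (via Fact~\ref{fact:coarsening and hyperfield}) $f$ induces a homomorphism over $p$, $\bar f\colon\mathcal H_{\bar\nu,n}\to\mathcal H_{\bar\omega,n}$, so by Theorem~\ref{theorem:lifting lemma}(1) there is an embedding of valued fields $\psi\colon(K\nu^{\circ},\bar\nu)\to(L\omega^{\circ},\bar\omega)$. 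Moreover, $f$ being over $p$ and the fields finitely ramified, the value-group embedding $\nu K\hookrightarrow\omega L$ carries $(\nu K)^{\circ}$ (the convex hull of $\mathbb Z\nu(p)$) into $(\omega L)^{\circ}$, hence descends to $\nu^{\circ}K\hookrightarrow\omega^{\circ}L$. Since $(K,\nu^{\circ})$ and $(L,\omega^{\circ})$ are henselian of residue characteristic $0$ with $(L,\omega^{\circ})$ sufficiently saturated, the standard reduction to the equicharacteristic-$0$ Ax--Kochen--Ershov principle (as in the proof of Theorem~\ref{theorem:relative completeness}) yields an embedding of valued fields $\Phi\colon(K,\nu^{\circ})\to(L,\omega^{\circ})$ inducing $\psi$ on residue fields and the above embedding on value groups. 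As $\nu$ is the composite of $\nu^{\circ}$ with $\bar\nu$ on $K\nu^{\circ}$ and $\Phi$ respects both, $\Phi$ is an $\mathcal L_{val}$-embedding $(K,\nu)\hookrightarrow(L,\omega)$, which gives the desired inclusion of existential $\mathcal L_{val}$-theories.

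\emph{Main obstacle.} The crux is in $(\Leftarrow)$: squeezing out of the containment of positive existential theories over the single element $[p]_n$ a homomorphism defined on all of $\mathcal H_{\nu,n}$, upgrading it via finite ramification to a homomorphism of valued hyperfields over $p$ suitable for Theorem~\ref{theorem:lifting lemma}(1) after coarsening, and then gluing the residue-field embedding $\psi$ with the induced value-group embedding into a single $\mathcal L_{val}$-embedding. No new ideas beyond Section~\ref{sect:lifting via valued hyperfield} are needed, but the coarsening correspondence and the bookkeeping that $\Phi$ respects the composite valuation $\nu$ (and induces embeddings on all three sorts) require care.
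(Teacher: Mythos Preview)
Your proof is correct and follows the paper's argument closely up through the construction of the embedding $\psi\colon (K\nu^{\circ},\bar\nu)\to (L\omega^{\circ},\bar\omega)$ via Theorem~\ref{theorem:lifting lemma}(1). The divergence is only in the final step of $(\Leftarrow)$: the paper does \emph{not} build a full $\mathcal L_{val}$-embedding $(K,\nu)\hookrightarrow (L,\omega)$. Instead, it invokes \cite[Lemma~2.3]{AF16} to obtain existentially closed embeddings $(K\nu^{\circ},\bar\nu)\hookrightarrow (K,\nu)$ and $(L\omega^{\circ},\bar\omega)\hookrightarrow (L,\omega)$, giving
\[
\Th^{\exists}_{\mathcal L_{val}}(K,\nu)=\Th^{\exists}_{\mathcal L_{val}}(K\nu^{\circ},\bar\nu)\subseteq \Th^{\exists}_{\mathcal L_{val}}(L\omega^{\circ},\bar\omega)=\Th^{\exists}_{\mathcal L_{val}}(L,\omega),
\]
where the middle inclusion comes directly from $\psi$. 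Your route instead couples $\psi$ with the induced value-group embedding $\nu^{\circ}K\hookrightarrow \omega^{\circ}L$ and applies the equicharacteristic-$0$ AKE embedding lemma to lift to $\Phi\colon (K,\nu^{\circ})\to (L,\omega^{\circ})$, then checks that $\Phi$ respects the composite valuation $\nu$. Both arguments are standard; yours produces an actual embedding (and is more self-contained, avoiding the external reference to \cite{AF16}), while the paper's version is quicker because it sidesteps the bookkeeping of lifting to $(K,\nu^{\circ})$ and verifying compatibility with the composite valuation.
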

\begin{proof}
By Remark \ref{rem:existentially definability of the valued hyperfield}, the $\mathcal L_{vhf}$-structures on $\mathcal H_{\nu,n}$ and $\mathcal H_{\omega,n}$ are existentially $\mathcal L_{val}$-definable uniformly in the valued fields $(K,\nu)$ and $(L,\omega)$, and so the left-to-right direction holds.\\

It remains to prove the right-to-left direction. We mimic the proof of \cite[Proposition 5.5]{ADJ24} using Theorem \ref{theorem:lifting lemma} instead of \cite[Lemma 4.5]{ADJ24}. After taking elementary extensions, we may assume that $(K,\nu)$ is $\aleph_1$-saturated and $(L,\omega)$ is $|K|^+$-saturated. Since $\Th^{\exists^+}_{\mathcal L_{vhf}}(\mathcal H_{\nu,n},[p]_n)\subseteq \Th^{\exists^+}_{\mathcal L_{vhf}}(\mathcal H_{\omega,n},[p]_n)$, by saturation, there is an $\mathcal L_{vhf}^+$-homomorphism $\mathcal H_{\nu,n}\rightarrow \mathcal H_{\omega,n}$ over $p$, which is a homomorphism of valued hyperfields by Remark \ref{rem:L_vhf homomorphism and vhf homomorphism}. Also, by saturation, $(K\nu^{\circ},\bar \nu)$ and $(L\omega^{\circ},\bar \omega)$ are complete $\mathbb Z$-valued fields of the same initial ramification index $e$. Then, by Remark \ref{rem:coarsening and hyperfield}, the homomorphism from $\mathcal H_{\nu,n}\rightarrow \mathcal H_{\omega,n}$ over $p$ induces a homomorphism $\mathcal H_{\bar \nu, n}\rightarrow \mathcal H_{\bar \omega,n}$ over $p$ and by Theorem \ref{theorem:lifting lemma}, there is an embedding $K\nu^{\circ}\rightarrow L\omega^{\circ}$. So, we have that $\Th^{\exists}_{\mathcal L_{val}}(K\nu^{\circ},\bar \nu)\subseteq \Th^{\exists}_{\mathcal L_{val}}(L\omega^{\circ},\bar \omega)$.

By a standard argument (c.f. \cite[Lemma2.3]{AF16}), there are existentially closed embeddings $(K\nu^{\circ},\bar \nu)\rightarrow (K,\nu)$ and $(L\omega^{\circ},\bar \omega)\rightarrow (L,\omega)$ and so $$\Th^{\exists}_{\mathcal L_{val}}(K\nu^{\circ},\bar \nu)=\Th^{\exists}_{\mathcal L_{val}}(K,\nu),\ \Th^{\exists}_{\mathcal L_{val}}(L\omega^{\circ},\bar \omega)=\Th^{\exists}_{\mathcal L_{val}}(L,\omega).$$ Thus, we have
$$\Th^{\exists}_{\mathcal L_{val}}(K,\nu)=\Th^{\exists}_{\mathcal L_{val}}(K\nu^{\circ},\bar \nu)\subseteq \Th^{\exists}_{\mathcal L_{val}}(L\omega^{\circ},\bar \omega)=\Th^{\exists}_{\mathcal L_{val}}(L,\omega).$$
\end{proof}

\begin{corollary}\label{cor:Hilbert's 10th via the higher valued field}
Let $(K,\nu)$ be finitely ramified henselian valued fields of mixed characteristic $(0,p)$ having the initial ramification index $e$. Let $n$ be a positive integer satisfying that $$\begin{cases}
n>0 &\mbox{if } p\not| e,\\
n>e^2M_{\nu}&\mbox{if } p|e. 
\end{cases}$$
Then, $\Th_{\mathcal L_{val}}^{\exists}(K,\nu)$ is decidable if and only if $\Th_{\mathcal L_{vfh}}^{\exists^+}(\mathcal H_{\nu,n},[p]_n)$ is decidable.
\end{corollary}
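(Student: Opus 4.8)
The plan is to run the same soft computability argument as in Corollary~\ref{cor:transfer of decidability of full theories}, but with a twist: since Theorem~\ref{theorem:relative existential completeness} only pins down $E:=\Th^{\exists}_{\mathcal L_{val}}(K,\nu)$ up to one inclusion (rather than up to equality, as relative completeness does), a single $\mathcal L_{val}$-theory will not be complete for existential sentences, so I will isolate \emph{two} recursively axiomatizable theories, one witnessing membership and one witnessing non-membership in $E$. The easy implication is immediate from Remark~\ref{rem:existentially definability of the valued hyperfield}: it attaches, computably, to each positive existential $\mathcal L_{vhf}$-sentence $\theta$ (reading the distinguished constant $[p]_n$ off as the field element $p$) an existential $\mathcal L_{val}$-sentence $\tilde\theta$ with $\mathcal H_{\nu,n}\models\theta\Leftrightarrow(K,\nu)\models\tilde\theta$; hence $\theta\in E^{+}:=\Th^{\exists^{+}}_{\mathcal L_{vhf}}(\mathcal H_{\nu,n},[p]_n)$ iff $\tilde\theta\in E$, so if $E$ is decidable then so is $E^{+}$.

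For the converse, let $T_{p,e}$ be the computably axiomatizable $\mathcal L_{val}$-theory of finitely ramified henselian valued fields of mixed characteristic $(0,p)$ with initial ramification index $e$, as in Corollary~\ref{cor:transfer of decidability of full theories}, and set
$$T^{+}:=T_{p,e}\cup\{\,\tilde\theta:\theta\in E^{+}\,\},\qquad T^{-}:=T_{p,e}\cup\{\,\neg\tilde\theta:\theta \text{ positive existential},\ \mathcal H_{\nu,n}\not\models\theta\,\}.$$
Both are satisfied by $(K,\nu)$, and both are recursively axiomatizable once $E^{+}$ is decidable (the translation $\theta\mapsto\tilde\theta$ being computable). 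The key claim is that for every existential $\mathcal L_{val}$-sentence $\varphi$,
$$(K,\nu)\models\varphi\iff T^{+}\models\varphi,\qquad (K,\nu)\not\models\varphi\iff T^{-}\models\neg\varphi.$$
In each case the right-to-left direction holds because $(K,\nu)$ is a model of $T^{\pm}$. For left-to-right of the first equivalence: any $(L,\omega)\models T^{+}$ is a finitely ramified henselian valued field of mixed characteristic $(0,p)$ and index $e$ with $E^{+}\subseteq\Th^{\exists^{+}}_{\mathcal L_{vhf}}(\mathcal H_{\omega,n},[p]_n)$ (by Remark~\ref{rem:existentially definability of the valued hyperfield}), so Theorem~\ref{theorem:relative existential completeness} gives $E\subseteq\Th^{\exists}_{\mathcal L_{val}}(L,\omega)$, whence $(K,\nu)\models\varphi$ forces $(L,\omega)\models\varphi$; as $(L,\omega)$ was arbitrary, $T^{+}\models\varphi$. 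Dually, any $(L,\omega)\models T^{-}$ satisfies $\Th^{\exists^{+}}_{\mathcal L_{vhf}}(\mathcal H_{\omega,n},[p]_n)\subseteq E^{+}$, so Theorem~\ref{theorem:relative existential completeness}, applied with $(L,\omega)$ in the distinguished role, gives $\Th^{\exists}_{\mathcal L_{val}}(L,\omega)\subseteq E$, whence $(K,\nu)\not\models\varphi$ forces $(L,\omega)\models\neg\varphi$, i.e.\ $T^{-}\models\neg\varphi$. Granting the claim, a decision procedure for $E$ (given one for $E^{+}$) is: search in parallel for a proof of $\varphi$ from $T^{+}$ and of $\neg\varphi$ from $T^{-}$; exactly one search terminates, and which one it is gives the answer.

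The genuine obstacle — and where this departs from Corollary~\ref{cor:transfer of decidability of full theories} — is recognizing that $T^{+}$ by itself is \emph{not} complete for existential sentences: $(K,\nu)$ need not be existentially closed in its class (its residue field or value group may fail to be), so some models of $T^{+}$ may satisfy strictly more existential $\mathcal L_{val}$-sentences than $(K,\nu)$ does, and one must therefore certify non-membership separately via the ``negative'' theory $T^{-}$. The one further technical point to check is that Theorem~\ref{theorem:relative existential completeness} is legitimately applicable with either of the two fields in the distinguished role; this is vacuous when $p\nmid e$, and when $p\mid e$ one verifies it using Fact~\ref{fact:value_of_different}(1), which bounds the $M$-invariant of every finitely ramified field of index $e$ by $d(e)/e^{2}$, so that an adequate $n$ serves both fields. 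With that in place the remainder is the routine argument of Corollary~\ref{cor:transfer of decidability of full theories}.
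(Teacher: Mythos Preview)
Your proof is correct and follows essentially the same route as the paper. The paper packages the argument slightly differently: it forms a single theory $T_1:=T_{p,e}\cup\{\tilde\theta:\theta\in E^{+}\}\cup\{\neg\tilde\theta:\theta\ \text{positive existential},\ \mathcal H_{\nu,n}\not\models\theta\}$ (your $T^{+}\cup T^{-}$) and the analogous $T_0$ on the valued-field side, then shows $T_0$ and $T_1$ have the same models via Theorem~\ref{theorem:relative existential completeness}; your separation into $T^{+}$ and $T^{-}$ with the parallel search is the same mechanism made explicit. Your observation about needing Theorem~\ref{theorem:relative existential completeness} with the roles of the two fields interchanged (and the appeal to the uniform bound of Fact~\ref{fact:value_of_different}(1) when $p\mid e$) is a point the paper leaves implicit.
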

\begin{proof}
It is standard and similar to the proof of Corollary \ref{cor:transfer of decidability of full theories}. Let $T_{p,e}$ be the $\mathcal L_{val}$-theory of all finitely ramified henselian valued fields of mixed characteristic $(0,p)$ having the initial ramification index $e$, which is computably axiomatizable. Let $T_{(K,\nu)}^{\exists}$ be the set of all existential $\mathcal L_{val}$-sentences in $\Th_{\mathcal L_{val}}^{\exists}(K,\nu)$ and negations of all existential $\mathcal L_{val}$-sentences not in $\Th_{\mathcal L_{val}}^{\exists}(K,\nu)$. Let $T_{\mathcal H_{\nu,n}}^{\exists^+}$ be the set of all positive existential $\mathcal L_{vhf}\cup\{[p]_n\}$-sentences in $\Th_{\mathcal L_{vhf}}^{\exists}(\mathcal H_{\nu,n},[p]_n)$ and negations of all positive existential $\mathcal L_{vhf}\cup\{[p]_n\}$-sentences not in $\Th_{\mathcal L_{val}}^{\exists}(\mathcal H_{\nu,n},[p]_n)$, and let $\tilde{T}_{\mathcal H_{\nu,n}}^{\exists^+}:=\{\tilde{\theta}:\theta\in T_{\mathcal H_{\nu,n}}^{\exists^+}\}$ so that given a finitely ramified valued field $(L,\omega)$ of mixed characteristic $(0,p)$ having the initial ramification index $e$, $$\mathcal (H_{\omega,n},[p]_n)\models T_{\mathcal H_{\nu,n}}^{\exists^+}\Leftrightarrow (L,\omega)\models \tilde{T}_{\mathcal H_{\nu,n}}^{\exists^+}$$ by Remark \ref{rem:existentially definability of the valued hyperfield}. Let $T_0:=T\cup T_{(K,\nu)}^{\exists}$ and $T_1:=T_{p,e}\cup \tilde{T}_{\mathcal H_{\nu,n}}^{\exists^+}$. 

We will show $T_0\models T_1$ and $T_1\models T_0$. Given a valued field $(L,\omega)$, if $(L,\omega)\models T_0$, then $(L,\omega)\equiv_{\exists}(K,\nu)$ and by Theorem \ref{theorem:relative existential completeness}, $(\mathcal H_{\omega,n},[p]_n)\equiv_{\exists^+}(\mathcal H_{\nu,n},[p]_n)$. So, $(L,\omega)\models T_1$, which implies $T_0\models T_1$. Conversely, if $(L,\omega)\models T_1$, then $(\mathcal H_{\omega,n},[p]_n)\equiv_{\exists^+}(\mathcal H_{\nu,n},[p]_n)$ and by Theorem \ref{theorem:relative existential completeness} again, $(L,\omega)\equiv_{\exists}(K,\nu)$. So, $(L,\omega)\models T_0$, which implies $T_1\models T_0$.

Thus, since $T_0\models T_1$ and $T_1\models T_0$, we have that $\Th_{\mathcal L_{val}}^{\exists}(K,\nu)$ is decidable if and only if $\{\tilde{\theta}:\theta\in \Th_{\mathcal L_{vfh}}^{\exists^+}(\mathcal H_{\nu,n},[p]_n)\}$ is decidable if and only if $\Th_{\mathcal L_{vfh}}^{\exists^+}(\mathcal H_{\nu,n},[p]_n)$ is decidable.
\end{proof}

\subsection{Existential closedness}\label{subsect:existential_closedness}
In this subsection, we aim to prove the AKE principle for existential closedness relative to higher valued hyperfields. We start with rewriting the embedding lemma \cite[Lemma 5.12]{ADJ24} using the valued hyperfield.
\begin{lemma}\label{lem:embedding lemma}
Let $(K,\nu)$ and $(L,\omega)$ be two extensions of a valued field $(K_0,\nu_0)$, all of which are henselian and finitely ramified of mixed characteristic $(0,p)$ having the same initial ramification index $e$. Suppose that
\begin{itemize}
	\item $\nu K/\nu_0 K_0$ is torsion-free;
	\item $K\nu/K_0\nu_0$ is separable; and
	\item $(K_0,\nu_0)$ and $(K,\nu)$ are $\aleph_1$-saturated, and $(L,\omega)$ is $|K|^+$-saturated.
\end{itemize}
Let $n$ be a positive integer satisfying that $$\begin{cases}
n>0 &\mbox{if } p\not| e,\\
n>e^2M_{\nu}&\mbox{if } p|e. 
\end{cases}$$
Then, if there is an $\mathcal L_{vhf}^+$-embedding $\mathcal H_{\nu,n}\rightarrow \mathcal H_{\omega,n}$ over $\mathcal H_{\nu_0,n}$, then there is an embedding $K\rightarrow L$ over $K_0$.
\end{lemma}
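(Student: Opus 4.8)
The plan is to follow the template of \cite[Lemma 5.12]{ADJ24} but feed it through the valued-hyperfield machinery developed in Section \ref{sect:lifting via valued hyperfield}, using Lemma \ref{lem:embedding over valued fields} as the key lifting step in place of the direct residue-field/value-group argument of \cite{ADJ24}. First I would pass to the finest coarsenings: by $\aleph_1$-saturation, $(K\nu_0^{\circ},\bar\nu_0)$, $(K\nu^{\circ},\bar\nu)$, $(L\omega^{\circ},\bar\omega)$ are complete $\mathbb Z$-valued fields of the same initial ramification index $e$ (and similarly the coarsened residue fields sit inside one another), and by Remark \ref{rem:coarsening and hyperfield} the given $\mathcal L_{vhf}^+$-embedding $\mathcal H_{\nu,n}\to\mathcal H_{\omega,n}$ over $\mathcal H_{\nu_0,n}$ — which is a genuine homomorphism of valued hyperfields by Remark \ref{rem:L_vhf homomorphism and vhf homomorphism} — induces a homomorphism $\mathcal H_{\bar\nu,n}\to\mathcal H_{\bar\omega,n}$ over $\mathcal H_{\bar\nu_0,n}$. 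The hypotheses that $\nu K/\nu_0 K_0$ is torsion-free and $K\nu/K_0\nu_0$ is separable descend to the coarsened data (torsion-freeness of the value-group quotient guarantees $\bar\nu_0 K\nu_0^{\circ}\subseteq\bar\nu K\nu^{\circ}$ is an inclusion of $\mathbb Z$-groups that stays an equality after coarsening, so in fact these complete $\mathbb Z$-valued fields all share the same value group and we are in the situation where Lemma \ref{lem:embedding over valued fields} applies over the complete base $(K\nu_0^{\circ},\bar\nu_0)$).

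Next I would apply Lemma \ref{lem:embedding over valued fields} with $K_0\rightsquigarrow (K\nu_0^{\circ},\bar\nu_0)$, $K\rightsquigarrow(K\nu^{\circ},\bar\nu)$, $L\rightsquigarrow(L\omega^{\circ},\bar\omega)$: the embedding $\Phi_0$ there is the one induced on the coarsened residue fields by the inclusion $K_0\subseteq K$ composed with the given data, $\varphi_{\mathcal H}^0$ is the induced hyperfield embedding on $\mathcal H_{\bar\nu_0,n}$, and $\varphi_{\mathcal H}$ is the coarsened hyperfield embedding $\mathcal H_{\bar\nu,n}\to\mathcal H_{\bar\omega,n}$ just produced. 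Lemma \ref{lem:embedding over valued fields} then yields an embedding $(K\nu^{\circ},\bar\nu)\to(L\omega^{\circ},\bar\omega)$ over $(K\nu_0^{\circ},\bar\nu_0)$. Finally I would lift this back up to $K\to L$ over $K_0$: here one invokes the standard fact (as in the proof of \cite[Lemma 5.12]{ADJ24}) that a finitely ramified henselian valued field with $\aleph_1$-saturation is, relative to its coarsening by $\nu^\circ$, determined as a valued field by the coarsened data together with the value-group and residue-field information — more precisely, one builds the embedding $K\to L$ over $K_0$ from the embedding of coarsened residue fields using henselianity, the torsion-freeness of $\nu K/\nu_0 K_0$ to match up value groups, and $|K|^+$-saturation of $(L,\omega)$ to realize the requisite types.

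The main obstacle I expect is the final ascent step — transferring the embedding of the coarsened structures to an embedding of the original valued fields over $K_0$. This is exactly where \cite[Lemma 5.12]{ADJ24} does its real work (combining an AKE-type back-and-forth over the coarsening with the torsion-free/separable hypotheses), and the present lemma can only cite or re-run that argument verbatim once the coarsened embedding is in hand; the valued-hyperfield input merely supplies what \cite{ADJ24} obtained from the residue field and value group separately. A secondary technical point worth checking carefully is that the coarsened hyperfield embedding really is over $\mathcal H_{\bar\nu_0,n}$ and compatible with $\Phi_0$ in the precise sense required by Lemma \ref{lem:embedding over valued fields} (i.e.\ that $\varphi_{\mathcal H}^0$ is genuinely \emph{induced} by $\Phi_0$, not merely agreeing with it abstractly); this follows from Remark \ref{rem:iso_hypevf_residue_iso} and the functoriality of the coarsening construction in Fact \ref{fact:coarsening and hyperfield}, but it should be spelled out.
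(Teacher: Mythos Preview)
Your proposal is correct and matches the paper's approach essentially step for step: pass to the finest coarsenings (using $\aleph_1$-saturation for completeness and Remark \ref{rem:coarsening and hyperfield} for the coarsened hyperfield embedding), apply Lemma \ref{lem:embedding over valued fields} to obtain an embedding $(K\nu^{\circ},\bar\nu)\to(L\omega^{\circ},\bar\omega)$ over $(K_0\nu_0^{\circ},\bar\nu_0)$, and then invoke the proof of \cite[Lemma 5.12]{ADJ24} for the ascent. The paper additionally records the induced value-group embedding $\varphi_{\Gamma}:\nu K\to\omega L$ explicitly (since its coarsening $\varphi_{\Gamma}^{\circ}$ enters the short-exact-sequence diagram from \cite{ADJ24}), and note your $\Phi_0$ should come from the inclusion $K_0\subseteq L$, not $K_0\subseteq K$.
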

\begin{proof}
Let $\varphi_{\mathcal H}:\mathcal H_{\nu,n}\rightarrow \mathcal H_{\omega,n}$ be an $\mathcal L_{vhf}^+$-embedding over $\mathcal H_{\nu_0,n}$, which is an embedding of valued hyperfields by Remark \ref{rem:L_vhf homomorphism and vhf homomorphism}, and it induces an embedding $\varphi_{\Gamma}:\nu K\rightarrow \omega L$ over $\nu_0 K_0$. Also, by Lemma \ref{lem:embedding over valued fields} with the inclusion $(K_0,\nu_0)\rightarrow (L,\omega)$, there is an embedding $\Phi_{\circ}:(K\nu^{\circ},\bar \nu)\rightarrow (L\omega^{\circ},\bar \omega)$ over $(K_0\nu_0^{\circ},\bar \nu_0)$. From the proof of \cite[Lemma 5.12]{ADJ24}, we have the desired embedding $K\rightarrow L$ over $K_0$.

Indeed, let $\varphi_{\Gamma}^{\circ}:\nu^{\circ}K\rightarrow \omega^{\circ}L$ be an embedding induced from the embedding $\varphi_{\Gamma}$. Then, the proof of \cite[Lemma 5.12]{ADJ24} gives an embedding $\psi:K^{\times}/1+\mathfrak{m}_{\nu^{\circ}}\rightarrow L^{\times}/1+\mathfrak{m}_{\omega^{\circ}}$ to make the following diagram commute:
$$
\begin{tikzcd}
1 \arrow[r] & (K\nu^{\circ})^{\times} \arrow[r] \arrow[d, "\Phi_{\circ}"]& K^{\times}/1+\mathfrak{m}_{\nu^{\circ}} \arrow[r] \arrow[d, "\psi"] & \nu^{\circ}K \arrow[r] \arrow[d, "\varphi_{\Gamma}^{\circ}"] &0\\
1 \arrow[r] & (L\omega^{\circ})^{\times} \arrow[r]& L^{\times}/1+\mathfrak{m}_{\omega^{\circ}} \arrow[r] & \omega^{\circ}L \arrow[r] & 0
\end{tikzcd}
$$
via the identifications $\mathcal O^{\times}_{\nu^{\circ}}/1+\mathfrak{m}_{\nu^{\circ}}\cong (K\nu^{\circ})^{\times}$ and $\mathcal O^{\times}_{\omega^{\circ}}/1+\mathfrak{m}_{\omega^{\circ}}\cong (L\omega^{\circ})^{\times}$, and this gives an embedding $K\rightarrow L$ over $K_0$ inducing $\psi$, which is the desired one.
\end{proof}

Using Lemma \ref{lem:embedding lemma}, we have an AKE principle for relative existential closedness relative to higher valued hyperfields.
\begin{theorem}\label{theorem:relative existential closedness}
Let $(K,\nu)\subseteq (L,\omega)$ be finitely ramified henselian valued fields of mixed characteristic $(0,p)$ having the same initial ramification index $e$. Let $n$ be a positive integer satisfying that $$\begin{cases}
n>0 &\mbox{if } p\not| e,\\
n>e^2M_{\nu}&\mbox{if } p|e. 
\end{cases}$$
If $\mathcal H_{\nu,n}\preceq_{\exists} \mathcal H_{\omega,n}$ in $\mathcal L_{vhf}$, then $(K,\nu)\preceq_{\exists} (L,\omega)$ in $\mathcal L_{val}$.
\end{theorem}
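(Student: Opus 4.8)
The plan is to prove directly that $(K,\nu)$ is existentially closed in $(L,\omega)$ by exhibiting an elementary extension $(K^*,\nu^*)\succeq(K,\nu)$ together with an $\mathcal L_{val}$-embedding $(L,\omega)\hookrightarrow(K^*,\nu^*)$ fixing $K$ pointwise: an existential $\mathcal L_{val}$-formula with parameters in $K$ which holds in $(L,\omega)$ then holds in $(K^*,\nu^*)$, hence in $(K,\nu)$, giving $(K,\nu)\preceq_\exists(L,\omega)$.

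First I would reduce to the case that $(K,\nu)$ and $(L,\omega)$ are $\aleph_1$-saturated, without breaking the inclusion or the hyperfield hypothesis. Since one cannot in general pass to a joint elementary extension of the pair $(K,\nu)\subseteq(L,\omega)$, I instead fix a countably incomplete ultrafilter $\mathcal U$ and replace the inclusion by $(K,\nu)^{\mathcal U}\subseteq(L,\omega)^{\mathcal U}$, both ultrapowers formed with the same $\mathcal U$. These are $\aleph_1$-saturated elementary extensions; since interpretations commute with ultrapowers we have $\mathcal H_{\nu^{\mathcal U},n}\cong(\mathcal H_{\nu,n})^{\mathcal U}\subseteq(\mathcal H_{\omega,n})^{\mathcal U}\cong\mathcal H_{\omega^{\mathcal U},n}$, and because an existential formula holds at an ultrapower tuple precisely when it holds $\mathcal U$-almost everywhere, the relation $\mathcal H_{\nu,n}\preceq_\exists\mathcal H_{\omega,n}$ is inherited, i.e.\ $\mathcal H_{\nu^{\mathcal U},n}\preceq_\exists\mathcal H_{\omega^{\mathcal U},n}$. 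Moreover, proving $(K,\nu)^{\mathcal U}\preceq_\exists(L,\omega)^{\mathcal U}$ suffices, since together with $(K,\nu)\preceq(K,\nu)^{\mathcal U}$ and $(L,\omega)\preceq(L,\omega)^{\mathcal U}$ it yields $(K,\nu)\preceq_\exists(L,\omega)$. So from now on assume $(K,\nu)$ and $(L,\omega)$ are $\aleph_1$-saturated.

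Now choose a $|L|^+$-saturated elementary extension $(K^*,\nu^*)\succeq(K,\nu)$; it is again finitely ramified henselian of mixed characteristic $(0,p)$ with initial ramification index $e$. By Remark \ref{rem:existentially definability of the valued hyperfield} the $n$th valued hyperfield is uniformly interpretable in $\mathcal L_{val}$, so $\mathcal H_{\nu^*,n}$ is a $|L|^+$-saturated elementary extension of $\mathcal H_{\nu,n}$, in particular of cardinality exceeding $|\mathcal H_{\omega,n}|$. Running the saturation argument of Theorem \ref{theorem:relative existential completeness}, now over the parameter set $\mathcal H_{\nu,n}$: realize in $\mathcal H_{\nu^*,n}$ a type over $\mathcal H_{\nu,n}$, in variables indexed by $\mathcal H_{\omega,n}$, consisting of all positive $\mathcal L_{vhf}(\mathcal H_{\nu,n})$-formulas true of the corresponding tuple in $\mathcal H_{\omega,n}$ together with the negations of all such that are false; finite consistency reduces, after prefixing existential quantifiers, to existential $\mathcal L_{vhf}$-sentences over $\mathcal H_{\nu,n}$ true in $\mathcal H_{\omega,n}$, which by $\mathcal H_{\nu,n}\preceq_\exists\mathcal H_{\omega,n}$ hold in $\mathcal H_{\nu,n}$ and hence in $\mathcal H_{\nu^*,n}$. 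This produces an $\mathcal L_{vhf}^+$-embedding $f\colon\mathcal H_{\omega,n}\to\mathcal H_{\nu^*,n}$ fixing $\mathcal H_{\nu,n}$ pointwise, hence fixing $[p]_n$; injectivity comes from the positive formula $x=y$. Also, applying Remark \ref{rem:relationship between vf rf vg} to $(K,\nu)\subseteq(L,\omega)$, the group $\omega L/\nu K$ is torsion-free and $L\omega/K\nu$ is separable.

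With these in hand I would invoke Lemma \ref{lem:embedding lemma} with base $(K_0,\nu_0):=(K,\nu)$, with the role of ``$(K,\nu)$'' there played by $(L,\omega)$ and that of ``$(L,\omega)$'' played by $(K^*,\nu^*)$: the three fields are finitely ramified henselian of mixed characteristic $(0,p)$ and ramification index $e$, $(K,\nu)$ and $(L,\omega)$ are $\aleph_1$-saturated, $(K^*,\nu^*)$ is $|L|^+$-saturated, the torsion-freeness and separability conditions hold, $n$ satisfies the stated bound, and $f$ supplies the required $\mathcal L_{vhf}^+$-embedding $\mathcal H_{\omega,n}\to\mathcal H_{\nu^*,n}$ over $\mathcal H_{\nu,n}$. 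The lemma then yields an $\mathcal L_{val}$-embedding $(L,\omega)\hookrightarrow(K^*,\nu^*)$ over $K$, and combined with the reduction this proves the theorem. The one genuinely delicate point is the reduction in the second paragraph: arranging simultaneous $\aleph_1$-saturation of $(K,\nu)$ and $(L,\omega)$ without destroying $(K,\nu)\subseteq(L,\omega)$ or $\mathcal H_{\nu,n}\preceq_\exists\mathcal H_{\omega,n}$, which forces the use of same-ultrafilter ultrapowers (equivalently, a monster model of the theory of the pair) and the verification that $\preceq_\exists$ survives them; everything after that is hypothesis-matching together with the by-now standard saturation argument extracting a hyperfield embedding over $\mathcal H_{\nu,n}$.
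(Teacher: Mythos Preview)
Your proposal is correct and follows essentially the same route as the paper: reduce to the $\aleph_1$-saturated case via ultrapowers, take an $|L|^+$-saturated $(K^*,\nu^*)\succeq(K,\nu)$, use Remark~\ref{rem:relationship between vf rf vg} for torsion-freeness and separability, extract an $\mathcal L_{vhf}^+$-embedding $\mathcal H_{\omega,n}\to\mathcal H_{\nu^*,n}$ over $\mathcal H_{\nu,n}$ by saturation, and conclude via Lemma~\ref{lem:embedding lemma}. You are simply more explicit than the paper about the ultrapower reduction (same ultrafilter to preserve both the inclusion and $\preceq_\exists$) and about the compactness argument producing the hyperfield embedding, both of which the paper leaves implicit.
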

\begin{proof}
We mimic the proof of \cite[Theorem 5.13]{ADJ24}. Taking ultrapowers properly, we may assume that $(K,\nu)$ and $(L,\omega)$ are $\aleph_1$-saturated, and consider an $|L|^+$-saturated elementary extension $(K^*,\nu^*)$ of $(K,\nu)$.

Since $\mathcal H_{\nu,n}\preceq_{\exists} \mathcal H_{\omega,n}$, by Remark \ref{rem:relationship between vf rf vg}, $\omega L/\nu K$ is torsion-free and $L\omega/K\nu$ is separable. Also, since $\mathcal H_{\nu,n}\preceq_{\exists} \mathcal H_{\omega,n}$ and $(K^*,\nu^*)$ is $|L|^+$-saturated, there is an $\mathcal L_{vhf}^+$-embedding $\mathcal H_{\omega,n}\rightarrow \mathcal H_{\nu^*,n}$ over $\mathcal H_{\nu,n}$, which is an embedding of valued hyperfields. Therefore, by Lemma \ref{lem:embedding lemma}, there is an embedding $(L,\omega)\rightarrow (K^*,\nu^*)$ over $(K,\nu)$, which implies that $(K,\nu)\preceq_{\exists} (L,\omega)$.
\end{proof}

\end{document}